\renewenvironment{proof}[1][Proof]{\textbf{#1.} }
{\ \rule{0.5em}{0.5em}}
\renewcommand{\arraystretch}{1.5}
\newtheorem{theorem}{Theorem}
\newtheorem{lemma}{Lemma}
\newtheorem{remark}{Remark}
\newtheorem{corollary}{Corollary}
\newtheorem{example}{Example}
\begin{document}

\title[Ricci curvature and normalized Ricci flow ... \dots]
{Ricci curvature and normalized Ricci flow on generalized Wallach spaces}

\author{N.\,A.~Abiev}
\address{N.\,A.~Abiev \newline
Institute of Mathematics NAS KR, Bishkek, prospect Chui, 265a, 720071, Kyrgyzstan}
\email{abievn@mail.ru}

\begin{abstract}
We proved that  the normalized
Ricci flow does not preserve the positivity of Ricci curvature of  Riemannian metrics
on every generalized Wallach space with $a_1+a_2+a_3\le 1/2$,
in particular on the spaces $\operatorname{SU}(k+l+m)/\operatorname{SU}(k)\times \operatorname{SU}(l)
\times \operatorname{SU}(m)$
and
$\operatorname{Sp}(k+l+m)/\operatorname{Sp}(k)\times \operatorname{Sp}(l)
\times \operatorname{Sp}(m)$ independently on $k,l$ and $m$.
The positivity of Ricci curvature is preserved  for  all original metrics with
$\operatorname{Ric}>0$ on generalized Wallach spaces  $a_1+a_2+a_3> 1/2$
if the conditions $4\left(a_j+a_k\right)^2\ge (1-2a_i)(1+2a_i)^{-1}$ hold
for all  $\{i,j,k\}=\{1,2,3\}$.
We also established that the spaces
$\operatorname{SO}(k+l+m)/\operatorname{SO}(k)\times \operatorname{SO}(l)\times \operatorname{SO}(m)$ satisfy the above conditions for $\max\{k,l,m\}\le 11$, moreover,
additional conditions were found to keep $\operatorname{Ric}>0$
in cases when $\max\{k,l,m\}\le 11$ is violated.
Similar questions have also been studied for all other generalized Wallach spaces given in the classification of Yuri\u\i\ Nikonorov.

\vspace{2mm} \noindent Key words and phrases:
generalized Wallach space,
invariant Riemannian metric, normalized Ricci flow, sectional curvature,
Ricci curvature, dynamical system.
\vspace{2mm}

\noindent {\it 2010 Mathematics Subject Classification:} 53C30, 53C44, 53E20, 37C10.
\end{abstract}

\maketitle

\section*{Introduction}\label{vvedenie}

In this paper we continue our studies~\cite{Ab7, AN}  related to the evolution of invariant Riemannian metrics on a class of homogeneous Riemannian spaces called generalized Wallach spaces (GWS) under the normalized Ricci flow (NRF)
\begin{equation}\label{ricciflow}
\dfrac {\partial}{\partial t} \bold{g}(t) = -2 \operatorname{Ric}_{\bold{g}}+ 2{\bold{g}(t)}\frac{S_{\bold{g}}}{n},
\end{equation}
introduced  in~\cite{Ham} by Hamilton, where $\operatorname{Ric}_{\bold{g}}$ is the Ricci tensor and $S_{\bold{g}}$ is the scalar curvature of an one-parametric family of Riemannian metrics~$\bold{g}(t)$ on a given Riemannian manifold of dimension~$n$.

According to~\cite{Nikonorov1}
a generalized Wallach space is
a~homogeneous almost effective compact space $G/H$ with  a  compact semisimple
connected Lie group~$G$ and its closed subgroup~$H$
(with corresponding Lie algebras~$\mathfrak{g}$ and~$\mathfrak{h}$)
such that
the isotropy representation of~$G/H$  admits a decomposition into a direct sum
$\mathfrak{p}=\mathfrak{p}_1\oplus \mathfrak{p}_2\oplus \mathfrak{p}_3$ of three
$\operatorname{Ad}(H)$-invariant irreducible modules
$\mathfrak{p}_1$, $\mathfrak{p}_2$ and $\mathfrak{p}_3$
satisfying
$[\mathfrak{p}_i,\mathfrak{p}_i]\subset \mathfrak{h}$ for each $i\in\{1,2,3\}$,
where~$\mathfrak{p}$ is an orthogonal complement of~$\mathfrak{h}$ in~$\mathfrak{g}$ with respect to a~bi-invariant inner product $\langle\boldsymbol{\cdot}
\,,\boldsymbol{\cdot}\rangle=-B(\boldsymbol{\cdot}\,,\boldsymbol{\cdot})$
on~$\mathfrak{g}$ defined by the Killing form
$B(\boldsymbol{\cdot}\,,\boldsymbol{\cdot})$ of~$\mathfrak{g}$.
For any fixed bi-invariant inner product
$\langle\cdot, \cdot\rangle$ on the Lie algebra $\mathfrak{g}$ of the Lie group~$G$,
any $G$-invariant Riemannian metric $\bold{g}$ on $G/H$ can be determined by an $\operatorname{Ad} (H)$-invariant inner product
\begin{equation}\label{metric}
(\cdot, \cdot)=\left.x_1\langle\cdot, \cdot\rangle\right|_{\mathfrak{p}_1}+
\left.x_2\langle\cdot, \cdot\rangle\right|_{\mathfrak{p}_2}+
\left.x_3\langle\cdot, \cdot\rangle\right|_{\mathfrak{p}_3},
\end{equation}
where $x_1,x_2,x_3$ are positive real numbers (see~\cite{Lomshakov2, Nikonorov2, Nikonorov4, Nikonorov1} and references therein for details).
We complete a brief description of these spaces recalling that one
of their important characteristics is that
every generalized Wallach space can be described by a triple of real numbers
$a_i:=A/d_i\in (0,1/2]$, $i=1,2,3$, where $d_i=\dim(\mathfrak{p}_i)$ and $A$ is some positive constant (see~\cite{Nikonorov2} for details).
In~\cite{Ab1,  AANS1, AANS2, Stat}  classifications of singular (equilibria) points of~\eqref{ricciflow} and their parametric bifurcations are studied on generalized Wallach spaces
and other classes of homogeneous spaces.
The interesting facts are
that  equilibria points are exactly Einstein metrics and vice versa, moreover,
the number of singular points and their types (hyperbolic, semi-hyperbolic, nilpotent or linearly zero) are most responsible for evolution of Riemannian metrics under NRF.  In~\cite{AN} we initiated the study of new aspects related to evolution of positively curved Riemannian metrics on GWS.

Historically the classification of homogeneous spaces admitting
homogeneous metrics with positive sectional curvature
were obtained as a result of a joint effort of
Aloff and Wallach~\cite{AW},
B\'erard Bergery~\cite{BB}, Berger~\cite{Be},
Wallach~\cite{Wal} and Wilking~\cite{Wil}.
Note  that B\'erard Bergery's classification odd-dimensional case
was recently revisited by Wolf and Xu in~\cite{XuWolf} and
a new version of the classification with refined proofs was
suggested by Wilking and Ziller in~\cite{WiZi}.
The classification of homogeneous spaces which admit
metrics with positive Ricci curvature was obtained by Berestovski\u\i\ in~\cite{Berest},
see also~\cite[Section 4.15]{BerNik2020}
where (in particular) homogeneous Riemannian manifolds of positive sectional curvature
and positive Ricci curvature are described.

Novadays an interesting question is to understand whether
the positivity of  sectional or (and) Ricci curvature of  Riemannian metrics
on a given Riemannian manifold is preserved or not when metrics are evolved by the Ricci flow.
Cheung and Wallach
showed  in Theorem~2 in~\cite{ChWal} non-maintenance of the sectional curvature for
certain metrics on the Wallach spaces
$$
W_6:=\operatorname{SU}(3)/T_{\max}, \qquad
W_{12}:=\operatorname{Sp(3)}/\operatorname{Sp(1)}\times \operatorname{Sp(1)}
\times \operatorname{Sp(1)}, \qquad
W_{24}:=F_4/\operatorname{Spin(8)}
$$
which are the only even-dimensional homogeneous spaces
admitting metrics with positive sectional curvature according to~\cite{Wal}.
B\"ohm and Wilking gave an example of metrics of positive sectional curvature
which can lose even the positivity of the Ricci curvature.
They proved in Theorem~3.1 in~\cite{Bo}  that
the (normalized) Ricci flow deforms some  metrics of positive sectional curvature
 on $W_{12}$  into metrics with mixed Ricci curvature. An analogous result was obtained for $W_{24}$ in Theorem~3 of~\cite{ChWal}.
In~\cite{Ab7, AN} we considered similar questions
on a class of generalized Wallach spaces with coincided parameters $a_1=a_2=a_3:=a\in (0,1/2)$
(see Section~\ref{defGWS} for definitions).
Note that a complete classification of generalized Wallach spaces
was obtained by Nikonorov~\cite{Nikonorov4}.
Chen et al. in~\cite{CKL} obtained similar results with simple $G$.

It should be also noted that
the Wallach spaces  $W_6, W_{12}$ and $W_{24}$ are exactly
partial cases of  generalized Wallach spaces with $a_1=a_2=a_3=a$
equal to $1/6, 1/8$ and $a=1/9$ respectively.

The following theorems were proved involving the apparatus of dynamical systems and
asymptotic analysis (by {\it generic} metrics we meant metrics satisfying  $x_i \ne x_j\ne x_k\ne x_i$,  $i,j,k\in \{1,2,3\}$):

\begin{theorem}[Theorem~1 in \cite{AN}]\label{the2}
On  the   Wallach spaces $W_6$, $W_{12}$, and $W_{24}$,  the normalized Ricci flow
evolves all generic metrics with positive sectional curvature into metrics with mixed sectional curvature.
\end{theorem}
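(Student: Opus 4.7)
The plan is to reduce the NRF on each Wallach space $W_6$, $W_{12}$, $W_{24}$ (corresponding to $a=1/6,1/8,1/9$ respectively) to a planar vector field and then combine its dynamics with an explicit description of the region of positive sectional curvature. Since $d_1=d_2=d_3=:d$ in these three cases, the flow~\eqref{ricciflow} preserves the volume $V\propto (x_1x_2x_3)^{d/2}$ of the metric~\eqref{metric}, so on the slice $\{x_1x_2x_3=1\}$ it becomes a two-dimensional autonomous ODE on an open triangle $\Sigma$. Its equilibria are precisely the Einstein metrics, which by the classical analysis on Wallach spaces consist of the standard metric $E_0=(1,1,1)$ together with three Jensen-type metrics $E_1,E_2,E_3$ forming a single $S_3$-orbit.

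The next step is an explicit description of the open set $\Sigma_+\subset\Sigma$ of points whose associated metric has positive sectional curvature. Using P\"uttmann-type formulas for the minimum of sectional curvature on Wallach spaces, $\Sigma_+$ is a small $S_3$-invariant neighbourhood of $E_0$ cut out by a finite list of polynomial inequalities in $(x_1,x_2,x_3)$; a \emph{generic} metric in $\Sigma_+$ then simply corresponds to a point off the three symmetry lines $x_i=x_j$.

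With these ingredients, the key dynamical step is to linearize the reduced planar flow at $E_0$. A direct computation using the explicit Ricci formulas for generalized Wallach spaces shows that for each $a\in\{1/6,1/8,1/9\}$ the two eigenvalues of the linearization transverse to the diagonal are strictly positive, so $E_0$ is a source of the reduced planar flow. Consequently, every trajectory starting at a generic point of $\Sigma_+$ leaves a neighbourhood of $E_0$, and its $\omega$-limit set, being confined to the finite equilibrium set by the standard Lyapunov-type argument for NRF on compact homogeneous spaces, must reduce to one of the Jensen metrics $E_i$.

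It then remains to verify that each $E_i$ lies outside $\overline{\Sigma_+}$ for each $a\in\{1/6,1/8,1/9\}$; by continuity of the flow this forces every generic trajectory starting in $\Sigma_+$ to cross $\partial\Sigma_+$ at some finite time $t_\ast>0$, producing a metric of mixed sectional curvature, which is exactly the conclusion of the theorem. I expect the principal obstacle to be the explicit semi-algebraic description of $\Sigma_+$: positivity of the sectional curvature is phrased as a supremum over 2-planes and a priori produces a long list of polynomial inequalities, so one must isolate those that are actually binding near $E_0$ and near each $E_i$. Once this description is pinned down, the remaining ingredients --- the linearization at $E_0$ and a finite number of sign evaluations at the Jensen metrics --- are elementary and verifiable by direct computation.
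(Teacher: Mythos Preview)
This theorem is not proved in the present paper: it is quoted as Theorem~1 of~\cite{AN} among the prior results motivating Theorems~\ref{thm_sum_a_i<1/2}--\ref{thm_3}, with only the remark that those results ``were proved involving the apparatus of dynamical systems and asymptotic analysis.'' There is therefore no proof here against which to compare your attempt.

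On the sketch itself, the strategy is reasonable and in the same dynamical-systems spirit as~\cite{AN, Ab_RM}: pass to the planar system on the volume slice, show that $E_0$ is the unique equilibrium in the closure of the positive-sectional region~$\Sigma_+$ and is repelling, and conclude that orbits must exit~$\Sigma_+$. One step needs tightening. Your appeal to a ``standard Lyapunov-type argument'' forcing the $\omega$-limit set into the finite set of equilibria is not valid as stated, because the slice~$\Sigma$ is non-compact and orbits of~\eqref{three_equat} can escape to infinity. Fortunately you do not need the global $\omega$-limit at all: once $E_0$ is the only equilibrium in the compact set $\overline{\Sigma_+}$ and is a source, the gradient-like structure of the normalized Ricci flow rules out periodic orbits, and then Poincar\'e--Bendixson forbids any nontrivial forward-invariant compact subset of $\overline{\Sigma_+}$, so every orbit other than the fixed point itself exits $\overline{\Sigma_+}$ in finite time, which is exactly the desired conclusion. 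The explicit semi-algebraic description of~$\Sigma_+$ that you flag as the principal obstacle is indeed where the substantive work lies; in~\cite{AN} this is handled by case-by-case curvature computations specific to each of $W_6$, $W_{12}$, $W_{24}$, and your proposal correctly anticipates that this is the non-routine part.
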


Using the same tools the following results were obtained about the Ricci curvature:

\begin{theorem}[Theorem~3 in \cite{AN}]\label{Sect_Ricci_gen}
Let $G/H$ be a generalized Wallach space with $a_1=a_2=a_3=:a$, where $a\in (0,1/4)\cup(1/4,1/2)$.
If $a<1/6$, then
the normalized Ricci flow
evolves {\it all}\/ generic metrics  with positive Ricci curvature  into metrics with mixed Ricci curvature.
If $a\in (1/6,1/4)\cup(1/4,1/2)$, then the normalized Ricci flow evolves {\it all}\/ generic metrics into metrics with positive Ricci curvature.
\end{theorem}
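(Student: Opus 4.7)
\medskip\noindent\textbf{Proof proposal.} My plan is to reduce the flow to a planar dynamical system on a $2$-simplex and combine a classification of its equilibria with a phase-portrait analysis. First, using the standard formulas for invariant metrics of the form~\eqref{metric} on a GWS with $a_1=a_2=a_3=a$, I would write out the three principal Ricci curvatures $r_1,r_2,r_3$ as explicit rational functions of $(x_1,x_2,x_3)$. Since the NRF preserves an appropriate product of the $x_i$'s, after fixing this volume-like invariant the flow descends to a two-dimensional system on a $2$-simplex $\Sigma$, and the $S_3$-symmetry of the problem (coming from $a_1=a_2=a_3$) is inherited. Let $\mathcal{R}_+\subset\Sigma$ denote the open region where $r_1,r_2,r_3$ are all positive; its closure is a curvilinear triangle bounded by the three algebraic curves $\{r_i=0\}$.

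Next, I would invoke the classification of singular points of this reduced system from~\cite{Ab1,AANS1,AANS2,Stat,Ab7,AN}: the Einstein metrics are the normal metric together with (for appropriate values of $a$) additional $S_3$-orbits of equilibria of the form $(x,x,y)$ born via a pitchfork bifurcation at $a=1/4$ (which is why this value must be excluded). The second critical value $a=1/6$ should arise from a direct computation of the signs of the $r_i$ at these non-normal equilibria and pinpoint the moment when such an equilibrium meets $\partial\mathcal{R}_+$. Using the Lyapunov behaviour of a suitable functional (the scalar curvature normalized by volume) together with the planar phase-portrait techniques already used in~\cite{Ab7,AN}, I would show that every generic trajectory of the reduced system converges to one of these equilibria. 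The case analysis then splits naturally: for $a\in(1/6,1/4)\cup(1/4,1/2)$ every Einstein equilibrium lies in $\mathcal{R}_+$, so every generic trajectory enters and remains in $\mathcal{R}_+$ for all sufficiently large~$t$; for $a\in(0,1/6)$ the attracting non-normal Einstein equilibria lie outside $\overline{\mathcal{R}_+}$, and a trajectory starting in $\mathcal{R}_+$ must therefore cross $\partial\mathcal{R}_+$ at some finite time, producing a metric with mixed Ricci curvature.

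The main obstacle I anticipate is the control of the generic basins of attraction together with the verification that the boundary $\partial\mathcal{R}_+$ is actually crossed (rather than merely approached) in finite time. This will require a careful algebraic description of $\partial\mathcal{R}_+$ and a transversality check using the explicit reduced vector field, both of which become particularly delicate near $a=1/4$ because of the bifurcation there; this is precisely the reason for excluding that value. A secondary subtlety is to confirm that the genericity condition $x_i\ne x_j\ne x_k\ne x_i$ is needed only to avoid the fixed sets $\{x_i=x_j\}$ of the $S_3$-action, which are invariant submanifolds of the flow and might otherwise host trajectories with different asymptotic behaviour from the ones we describe.
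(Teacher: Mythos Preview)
Your proposal contains a genuine error that undermines the case split. The claim that for $a\in(0,1/6)$ the non-normal Einstein equilibria lie outside $\overline{\mathcal{R}_+}$ is false: up to permutation and scale the non-normal Einstein metrics are $(1,1,q)$ with $q=(1-2a)/(2a)$, and at such a point every principal Ricci curvature equals $(1+2a)/4>0$. More generally, any invariant Einstein metric on $G/H$ with $G$ compact semisimple has positive scalar curvature, hence positive Einstein constant, hence $\operatorname{Ric}>0$. Thus for \emph{every} $a\in(0,1/2)$ all four equilibria lie strictly inside $\mathcal{R}_+$, and the threshold $a=1/6$ cannot be detected by the position of the Einstein metrics relative to $\partial\mathcal{R}_+$. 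Your argument for $a<1/6$ therefore collapses; and for $a>1/6$ the observation ``every equilibrium lies in $\mathcal{R}_+$'' is true but was already true for $a<1/6$, so by itself it cannot explain why trajectories remain in $\mathcal{R}_+$.

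The approach actually used in~\cite{AN}, and reproduced in the present paper's Lemmas~\ref{znak_inner_prod}--\ref{znaki_coeff} and the proof of Theorem~\ref{thm_sum_a_i<1/2}, is different. One parametrizes each boundary arc $r_i=\Sigma\cap\{\lambda_i=0\}$ and reduces the sign of the inner product $(\mathbf{V},\nabla\lambda_i)$ along $r_i$ to the sign of an explicit quartic $h(t)$ whose leading coefficient $c_0$ is proportional to $2(a_1+a_2+a_3)-1=6a-1$. The value $a=1/6$ is exactly where $c_0$ changes sign: for $a<1/6$ the polynomial $h$ is negative on unbounded arcs of $\partial\mathcal{R}_+$, so the field points outward there, while for $a>1/6$ one checks $h\ge 0$ on the relevant parameter range and the flow points inward along the whole boundary. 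The ``all generic metrics'' clause for $a<1/6$ then rests on an additional asymptotic analysis: generic trajectories do \emph{not} converge to the Einstein equilibria (which are repelling or of saddle type off the invariant lines $x_i=x_j$) but escape toward the collapse regime where some $x_i\to 0$, and one verifies that every such escaping trajectory must meet the outward portion of $\partial\mathcal{R}_+$ in finite time. Your convergence-to-equilibria hypothesis would have to be replaced by this divergence analysis.
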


\begin{theorem}[Theorem~4 in \cite{AN}]\label{Sect_Ricci_genn}
Let $G/H$ be a generalized Wallach space with $a_1=a_2=a_3=1/6$.
Suppose that it is supplied with the invariant Riemannian metric \eqref{metric} such that $x_k<x_i+x_j$ for all indices with $\{i,j,k\}=\{1,2,3\}$,
then the normalized Ricci flow on $G/H$ with this metric as the initial point, preserves the positivity of the Ricci curvature.
\end{theorem}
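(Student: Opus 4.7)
The plan is to identify an open region in the space of diagonal metric parameters $(x_1,x_2,x_3) \in (0,\infty)^3$ that (i) consists entirely of positive-Ricci metrics and (ii) is forward-invariant under the reduced NRF system. Writing $h_i(x) := x_j + x_k - x_i$ for $\{i,j,k\}=\{1,2,3\}$, the hypothesis of the theorem places the initial metric in
\[
\Omega := \{(x_1,x_2,x_3) \in (0,\infty)^3 : h_1 > 0,\; h_2 > 0,\; h_3 > 0\}.
\]
Once (i) and (ii) are established, forward-invariance of $\Omega$ combined with positivity of Ricci on $\Omega$ immediately yields the theorem.

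Step (i) is a short algebraic check. The standard expressions for the principal Ricci eigenvalues on a GWS, specialized to $a_1=a_2=a_3=1/6$, give
\[
r_i \;=\; \frac{x_i^2 - x_j^2 - x_k^2 + 6\, x_j x_k}{12\, x_i x_j x_k}
\;=\; \frac{h_j\, h_k + 4\, x_j x_k}{12\, x_i x_j x_k}\,,
\]
where the second equality uses $x_i^2 - (x_j-x_k)^2 = (x_i - x_j + x_k)(x_i + x_j - x_k) = h_j h_k$. The right-hand side is manifestly positive on $\Omega$.

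For step (ii), since $a_1=a_2=a_3$ forces $d_1=d_2=d_3$, the NRF reduces, up to an inessential time rescaling, to the autonomous system $\dot x_i = \tfrac{2 x_i}{3}(r_j + r_k - 2 r_i)$. Collecting the coefficients of $r_1,r_2,r_3$ in $\dot h_3 = \dot x_1 + \dot x_2 - \dot x_3$ produces
\[
\dot h_3 \;=\; 2\bigl(x_3 r_3 - x_1 r_1 - x_2 r_2\bigr) \;+\; \tfrac{2 h_3}{3}\bigl(r_1+r_2+r_3\bigr).
\]
The crucial identity is $x_3 r_3 = x_1 r_1 + x_2 r_2$ on the face $\{h_3 = 0\}$: substituting $x_3 = x_1 + x_2$ into $x_i r_i = \tfrac12 - \tfrac{1}{12}\cdot\tfrac{x_j^2+x_k^2-x_i^2}{x_j x_k}$ and using $x_2^2+x_3^2-x_1^2 = 2 x_2 x_3$, $x_1^2+x_3^2-x_2^2 = 2 x_1 x_3$, and $x_1^2+x_2^2-x_3^2 = -2 x_1 x_2$, one gets $x_1 r_1 = x_2 r_2 = 1/3$ and $x_3 r_3 = 2/3$. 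Hence $\dot h_3$ vanishes on $\{h_3 = 0\}$, so it factors as $\dot h_3 = h_3 \cdot g(x)$ with $g$ smooth, which makes $\{h_3 > 0\}$ forward-invariant. By the symmetry of the system under permutation of the indices $\{1,2,3\}$ the same conclusion holds for $h_1, h_2$; since the three boundary faces $\{h_i = 0\}$ do not intersect inside the positive octant, $\Omega$ itself is forward-invariant under the flow.

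The only piece of substance is the identity $x_3 r_3 = x_1 r_1 + x_2 r_2$ on $\{x_3 = x_1 + x_2\}$. Repeating the same calculation for a general equal parameter $a$ in place of $1/6$ yields $x_3 r_3 - x_1 r_1 - x_2 r_2 = 3a - 1/2$ on this face, so $a = 1/6$ is precisely the critical value at which the triangle region becomes flow-invariant. This explains why the present theorem complements, at the threshold $a = 1/6$, the dichotomy exhibited by Theorem~\ref{Sect_Ricci_gen}.
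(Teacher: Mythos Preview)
Your proof is correct and follows essentially the same approach as the cited reference~\cite{AN}: the paper itself does not reprove this result, but its Final Remarks confirm that the relevant invariant region is exactly the domain bounded by the K\"ahler metric conditions $x_k=x_i+x_j$, which is precisely your~$\Omega$. Your identification of $a=1/6$ as the unique value making the faces $\{h_i=0\}$ flow-invariant (via $x_3r_3-x_1r_1-x_2r_2=3a-\tfrac12$ on $\{x_3=x_1+x_2\}$) is the key computation, and your algebra checks out.
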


\begin{theorem}[Theorem~3 in \cite{Ab7}]\label{thm1}
The normalized Ricci flow on a generalized Wallach space $G/H$
$a_1=a_2=a_3=1/4$
evolves {\it all}\/ generic metrics into metrics  with positive Ricci curvature.
\end{theorem}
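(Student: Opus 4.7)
The strategy is to reduce the normalized Ricci flow on the family of invariant metrics~\eqref{metric} to a planar autonomous dynamical system, classify its equilibria, establish global convergence to the unique equilibrium, and then read off Ricci positivity at the asymptotic state.

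\emph{Step 1 (Reduction).} Start from the standard formula expressing the Ricci components $r_i$ of the metric~\eqref{metric} on a GWS in terms of $(x_1,x_2,x_3)$ and $(a_1,a_2,a_3)$, and specialize to $a_1=a_2=a_3=1/4$ (which forces $d_1=d_2=d_3$). Rewriting the NRF~\eqref{ricciflow} in the coordinates $(x_1,x_2,x_3)$ produces an ODE system that preserves the volume $x_1x_2x_3$; fixing $x_1x_2x_3=1$ cuts out an invariant $2$-dimensional surface $\Sigma\subset\mathbb{R}^3_{>0}$ on which the flow is autonomous and well-defined.

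\emph{Step 2 (Equilibria).} Equilibria on $\Sigma$ are exactly the unit-volume invariant Einstein metrics on $G/H$. For $a_1=a_2=a_3=1/4$ a direct computation from the Einstein equations $r_1/x_1=r_2/x_2=r_3/x_3$ shows that the symmetric point $\mathbf{x}^{\ast}=(1,1,1)$ is the unique equilibrium on $\Sigma$. Moreover $a=1/4$ is precisely the bifurcation value at which the three non-symmetric (Jensen-type) Einstein metrics present for $a\in(0,1/4)$ (compare Theorem~\ref{Sect_Ricci_gen}) coalesce with the symmetric one, so $\mathbf{x}^{\ast}$ is a degenerate equilibrium at this value.

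\emph{Step 3 (Global convergence).} Since $\mathbf{x}^{\ast}$ is non-hyperbolic at $a=1/4$, Hartman--Grobman is unavailable and convergence must be obtained by a Lyapunov argument. Check that the scalar curvature $S$ restricted to $\Sigma$ is strictly monotone along NRF orbits away from equilibria, with $\mathbf{x}^{\ast}$ as its unique critical point, and that $S\to+\infty$ as $(x_1,x_2,x_3)$ approaches the boundary $\{x_i=0\}$ of $\Sigma$, so that $S$-sublevel sets in $\Sigma$ are relatively compact. LaSalle's invariance principle then forces the $\omega$-limit of every generic orbit to be $\{\mathbf{x}^{\ast}\}$. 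Finally, evaluating the Ricci formula at $\mathbf{x}^{\ast}$ yields $r_i(\mathbf{x}^{\ast})>0$ for $i=1,2,3$, so $\operatorname{Ric}_{\bold{g}}>0$ at the limit metric; by continuity of $r_i$ and the convergence just established, for every generic initial metric there is a time $t_0$ beyond which $\operatorname{Ric}_{\bold{g}(t)}>0$.

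The main obstacle is Step~3: at the bifurcation value $a=1/4$ the equilibrium $\mathbf{x}^{\ast}$ has a degenerate linearization (three non-symmetric Einstein metrics collapse into it along $S_3$-symmetric center directions), so the direct phase-portrait argument used for the open range $a\in(0,1/4)\cup(1/4,1/2)$ in Theorem~\ref{Sect_Ricci_gen} breaks down. Constructing an honest Lyapunov function from the scalar curvature, verifying its strict monotonicity in the volume-preserving gauge, and ruling out orbits that escape toward $\partial\Sigma$ or linger along the center-manifold directions of the collapsed Jensen equilibria is the core technical content of the proof.
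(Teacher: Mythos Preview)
First, note that this paper does not contain its own proof of Theorem~\ref{thm1}: the result is quoted from~\cite{Ab7} as background. What the present paper does supply for $a_1=a_2=a_3=1/4$ is the boundary analysis of Theorem~\ref{thm_sum_a_i>1/2} and Example~\ref{ex_sum>1/2_x}: one has $\theta=1/4>\theta_i\approx 0.0387$, so by Lemmas~\ref{normal_inside_gen} and~\ref{znak_inner_prod} the inner product $(\mathbf V,\nabla\lambda_i)>0$ along every boundary curve $r_i$ of the region~$R=\{\operatorname{Ric}>0\}$. That is a transversality argument on $\partial R$, not a global convergence argument, and by itself it yields only the weaker statement that $R$ is forward-invariant; it does not directly show that every generic orbit eventually enters~$R$.

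Your overall strategy---reduce to the planar system on~$\Sigma$, identify the unique (degenerate) equilibrium $(1,1,1)$, and prove global attraction---is a reasonable route to the full statement, and your Steps~1 and~2 are correct. The genuine gap is in Step~3: the scalar curvature cannot serve as the Lyapunov function in the way you describe. Along the homogeneous normalized flow one indeed has $\dot S=2|\operatorname{Ric}^0|^2\ge 0$, but your boundary claim is false. With $a=1/4$ and $x_1x_2x_3=1$ one has
\[
d^{-1}S=\tfrac12\sum_{i<j}x_ix_j-\tfrac18\sum_i x_i^2,
\]
and along $x_1=t^2,\ x_2=x_3=t^{-1}$ this tends to $-\infty$, while along $x_1=x_2=t,\ x_3=t^{-2}$ it tends to $+\infty$. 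Hence $S$ is unbounded in both directions on~$\Sigma$: sublevel sets $\{S\le c\}$ are not relatively compact, superlevel sets $\{S\ge c\}$ are not forward-invariant, $(1,1,1)$ is not a maximum of~$S$, and LaSalle's principle gives no conclusion. In particular nothing in your argument prevents orbits from escaping toward the ends of~$\Sigma$ where $S\to+\infty$. To rescue Step~3 you need a different trapping mechanism---for instance the boundary analysis of~$R$ carried out in this paper, combined with a separate argument ruling out escape to infinity---rather than the monotonicity of~$S$ alone.
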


Recent time there are some new results on evolution of positively curved Riemannian metrics via Ricci flow. Among them Bettiol and Krishnan showed in~\cite{Bettiol} that Ricci flow does not preserve positive sectional curvature of Riemannian metrics on $S^4$ and $\mathbb{C}P^2$.
Some extended results on positive intermediate curvatures for an infinite series of homogeneous spaces
$
\operatorname{Sp}(n+1)/\operatorname{Sp}(n-1)\times \operatorname{Sp}(1)\times \operatorname{Sp}(1)
$
was obtained in~\cite{Gonzales}
by Gonz\'alez-\'Alvaro and  Zarei,
where the space $W_{12}$ is included as the first member corresponding to $n=2$.
The group of authors Cavenaghi et al. proved in~\cite{Caven} that $\operatorname{SU}(3)/T_{\max}$ admits metrics of positive intermediate
Ricci curvature losing positivity under Ricci flow.
They established similar properties  for
a family of homogeneous spaces
$
\operatorname{SU}(m+2p)/\operatorname{SU}(m)\times \operatorname{SU}(p)
\times \operatorname{SU}(p)
$
as well.
In this paper we consider
generalized Wallach spaces
$\operatorname{SO}(k+l+m)/\operatorname{SO}(k)\times \operatorname{SO}(l)
\times \operatorname{SO}(m)$,
$\operatorname{SU}(k+l+m)/\operatorname{SU}(k)\times \operatorname{SU}(l)
\times \operatorname{SU}(m)$
and
$\operatorname{Sp}(k+l+m)/\operatorname{Sp}(k)\times \operatorname{Sp}(l)
\times \operatorname{Sp}(m)$
and other ones listed in Table~1
in accordance with~\cite{Nikonorov4},
where  GWS~{\bf n} means a generalized Wallach space with $(\mathfrak{g}, \mathfrak{h})$ positioned in line~{\bf n}.
From the geometrical point of view only a countable number of natural triples $(k,l,m)$,
which correspond to actual generalized Wallach spaces,
can be interesting for us.
Knowing that  not every  triple $(a_1, a_2, a_3)\in (0,1/2)^3$
can correspond to some  generalized Wallach spaces,
nevertheless we will consider the problem in a more global context
for any reals  $a_1, a_2, a_3\in (0,1/2)$, using an opportunity
to involve fruitful methods of continuous mathematics.
Such an approach  justified itself introducing in~\cite{AANS1, AANS2}
a special algebraic surface defined by a polynomial of degree $12$ in three real variables
$a_1, a_2$ and $a_3$, which provides degenerate singular points to a three-dimensional
dynamical system obtained as a reduction of the normalized Ricci flow on generalized Wallach spaces.
Topological characteristics of that surface such as the number of connected components was found by~\cite{Ab2}, and a deeper structural analysis was given in~\cite{Bat2, Bat, Bruno, Bruno2}.
It should be also noted that in~\cite{Ab_RM}
results of Theorem~\ref{the2} were generalized
to the case $a\in (0,1/2)$ considering \eqref{three_equat} as an abstract dynamical system.
In the present paper we extend Theorems~\ref{Sect_Ricci_gen}\,--\,\ref{thm1}  to the case
of arbitrary  $a_1, a_2, a_3\in (0,1/2)$.
Our main results are contained in Theorems~\ref{thm_sum_a_i<1/2}\,--\,\ref{thm_3} and Corollary~\ref{corol_1}.

\begin{theorem}\label{thm_sum_a_i<1/2}
Let $G/H$ be a generalized Wallach space with $a_1+a_2+a_3\le 1/2$.
Then the normalized Ricci flow
evolves {\it some}\/  metrics~\eqref{metric}  with positive Ricci curvature  into metrics with mixed Ricci curvature.
\end{theorem}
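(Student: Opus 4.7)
The plan is to exhibit a single initial metric $\bold{g}_0$ with $\operatorname{Ric}_{\bold{g}_0}>0$ whose forward NRF trajectory leaves the positive-Ricci region in finite time. I will do this by first locating a point on the boundary of the positive-Ricci cone at which the NRF vector field points transversally outward, and then flowing backward a short time to produce the desired initial data.

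First I would record the standard expressions
$$
r_i \;=\; \frac{1}{2x_i} \;-\; \frac{a_i}{2}\left(\frac{x_i}{x_j x_k} \;-\; \frac{x_k}{x_i x_j} \;-\; \frac{x_j}{x_i x_k}\right),\qquad \{i,j,k\}=\{1,2,3\},
$$
and use them to describe the positive-Ricci domain $U\subset\mathbb{R}^3_{>0}$ as the open scale-invariant cone cut out by the three inequalities $x_j x_k > a_i(x_i^2-x_j^2-x_k^2)$; its boundary is the union of three smooth surfaces $\Sigma_i:=\{r_i=0\}$. On the slice $x_2=x_3=1$, the only constraint that becomes tight as $x_1$ grows is $r_1=0$, which happens at $\lambda_\star=\sqrt{(1+2a_1)/a_1}$, and at the point $\bold{p}:=(\lambda_\star,1,1)$ one checks directly that $r_2,r_3>0$. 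Thus $\bold{p}\in\Sigma_1\cap\overline{U}$ is a natural candidate.

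The heart of the proof is then to compute $\dot r_1|_{\bold{p}}$ along the reduced system \eqref{three_equat}, i.e., $\dot r_1=\sum_i(\partial_{x_i}r_1)\,\dot x_i$ with $\dot x_i$ evaluated at $\bold{p}$, and to verify that the resulting rational expression $R(a_1,a_2,a_3)$ is negative whenever $a_1+a_2+a_3\le 1/2$. Once such outward transversality is established at $\bold{p}$, smooth dependence on initial data for the NRF provides, for some small $\varepsilon>0$, a metric $\bold{g}_0$ corresponding to $(x_1(-\varepsilon),x_2(-\varepsilon),x_3(-\varepsilon))\in U$ whose forward trajectory meets $\Sigma_1$ at time $\varepsilon$ and then enters the region $\{r_1<0\}$, so the metric acquires mixed Ricci curvature as required.

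The main obstacle is the sign verification of $R(a_1,a_2,a_3)$ in the previous paragraph. Since the bound $a_1+a_2+a_3\le 1/2$ is sharp --- Theorem~\ref{Sect_Ricci_gen} already handles the diagonal subcase $a_1=a_2=a_3<1/6$ --- I expect $R$ to factor, up to a manifestly positive multiplier, as something containing $1-2(a_1+a_2+a_3)$. If a direct substitution at the symmetric candidate $\bold{p}=(\lambda_\star,1,1)$ does not produce such a clean factorization, the fallback is to work on the larger two-parameter family $(\lambda,\mu,1)\in\Sigma_1$, using the extra parameter $\mu$ to locate a boundary point of outward transversality under the single hypothesis $\sum a_i\le 1/2$; alternatively one can argue by continuity/openness from the diagonal case covered by Theorem~\ref{Sect_Ricci_gen} along paths in the parameter region $\{a_1+a_2+a_3\le 1/2\}$, transferring the mixed-Ricci crossing to the general point.
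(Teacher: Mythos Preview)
Your candidate point $\bold{p}=(\lambda_\star,1,1)$ does not lie on $\Sigma_1$. With the correct sign in the principal Ricci curvature (the paper has $\mathbf r_i=\tfrac{1}{2x_i}+\tfrac{a_i}{2}(\cdots)$, not $-\tfrac{a_i}{2}(\cdots)$), the defining function is $\lambda_1=a_1(x_1^2-x_2^2-x_3^2)+x_2x_3$; along $x_2=x_3$ this equals $a_1x_1^2+(1-2a_1)x_2^2>0$ everywhere in the positive octant, so $\Sigma_1$ never meets the plane $x_2=x_3$ at all (this is exactly the content of Lemma~\ref{r_and_I}: the curve $I_1$ avoids $r_1$). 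Your value $\lambda_\star=\sqrt{(1+2a_1)/a_1}$ comes from the sign-flipped formula, and the geometric claim that ``$r_1=0$ is the constraint that tightens as $x_1$ grows'' is backwards: it is $r_2$ and $r_3$ that fail first.

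Even after correcting the target boundary to, say, $\Sigma_2$ (which does meet $x_2=x_3$, at $x_1=1/a_2$), the symmetric point still does not work. In the paper's notation this is the parameter value $t=a_2^{-1}$ on the component $r_{23}$; the sign of $\dot r_2$ there coincides with that of $F-G$ in Lemma~\ref{znak_inner_prod}, and one computes $F-G=(a_1+a_3)(1-2a_2^2)/a_2-(1-2a_2)(1+a_2)$, which is \emph{positive} already on the diagonal $a_1=a_2=a_3=a$ (it equals $1+a-2a^2>0$). So the flow points \emph{inward} at the symmetric boundary point, and your hoped-for factorization through $1-2(a_1+a_2+a_3)$ cannot occur there. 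The paper shows that outward transversality happens only far out on the boundary curve: parametrizing $r_i$ by $t=x_j/x_k$, the sign of $(\bold V,\nabla\lambda_i)$ is governed by a palindromic quartic $h(t)$, reduced via $y=t+t^{-1}$ to a quadratic $p(y)$ whose unique root in $(a_i^{-1},\infty)$ (when $\sum a_\ell\le 1/2$) produces $t_2>M_i$ with $h<0$ for $t>t_2$. Your fallback ``two-parameter family on $\Sigma_1$'' is precisely this analysis; the computation is the substance of the proof and cannot be bypassed by a single well-chosen point or by continuity from the diagonal case.
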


\begin{theorem}\label{thm_sum_a_i>1/2}
Let $G/H$ be a generalized Wallach space with  $a_1+a_2+a_3> 1/2$.
\begin{enumerate}
\item
The normalized Ricci flow~\eqref{ricciflow}  evolves {\it all}\/
metrics~\eqref{metric}  with positive Ricci curvature  into metrics with positive Ricci curvature if~
$\theta \ge \max\left\{\theta_1, \theta_2, \theta_3\right\}$,
\item
At least some metrics~\eqref{metric}  with positive Ricci curvature  can be evolved into metrics with positive Ricci curvature, if  $\theta \ge \max\left\{\theta_1, \theta_2, \theta_3\right\}$ fails,
\end{enumerate}
where\, $\theta:=a_1+a_2+a_3-1/2\in (0,1)$, \,
$\theta_i:=a_i-1/2+\sqrt{(1-2a_i)(1+2a_i)^{-1}}\big/2$, \,
$i=1,2,3$.
\end{theorem}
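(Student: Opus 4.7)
The plan is to establish forward invariance of the open set of positive-Ricci metrics
$\mathcal{R}_+:=\{(x_1,x_2,x_3)\in(0,\infty)^3:\,r_i>0,\ i=1,2,3\}$
under the normalized Ricci flow \eqref{ricciflow}. Recall that on a generalized Wallach space the Ricci and scalar curvatures of the metric \eqref{metric} are
\[
r_i=\frac{1}{2x_i}+\frac{a_i}{2}\left(\frac{x_i}{x_j x_k}-\frac{x_j}{x_i x_k}-\frac{x_k}{x_i x_j}\right),\qquad S=\sum_{l=1}^{3}d_l r_l,
\]
so \eqref{ricciflow} reduces to the ODE system $\dot x_l=2x_l(S/n-r_l)$, $l=1,2,3$. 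Setting
\[
F_i(x_1,x_2,x_3):=x_jx_k+a_i\bigl(x_i^2-x_j^2-x_k^2\bigr),
\]
one has $2x_i x_j x_k\, r_i=F_i$, so $r_i$ and $F_i$ carry the same sign; forward invariance of $\mathcal R_+$ therefore reduces to verifying $\dot F_i\ge 0$ on each boundary face $\{F_i=0,\ F_j\ge 0,\ F_k\ge 0\}$, for every $i\in\{1,2,3\}$.

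First I would differentiate $F_i$ along the flow and substitute $\dot x_l=2x_l(S/n-r_l)$; a routine manipulation collects the terms proportional to $S/n$ into the factor $F_i$ itself, producing
\[
\dot F_i=\frac{4S}{n}\,F_i-2x_jx_k(r_j+r_k)-4a_i\bigl[x_i^2 r_i-x_j^2 r_j-x_k^2 r_k\bigr].
\]
Evaluated at a point with $F_i=0$ (so $r_i=0$) the first and the $x_i^2 r_i$ terms vanish, and the right-hand side collapses to
\[
\dot F_i\big|_{F_i=0}=2\bigl[x_j r_j(2a_i x_j-x_k)+x_k r_k(2a_i x_k-x_j)\bigr].
\]
The main obstacle is that the coefficients $2a_ix_j-x_k$ and $2a_ix_k-x_j$ may be of either sign, so nonnegativity of this expression is not evident and must be extracted from the constraint $F_i=0$ together with $F_j,F_k\ge 0$.

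To extract it I would scale so that $x_j=1$ and set $x_k=t$; the condition $F_i=0$ then forces $x_i=\sqrt{Q(t)}$ with $Q(t):=1+t^2-t/a_i$, which is nonnegative exactly on $(0,t_-]\cup[t_+,\infty)$, where $t_\pm:=(1\pm\sqrt{1-4a_i^2})/(2a_i)$. Substituting $r_j=F_j/(2t\sqrt Q)$, $r_k=F_k/(2t\sqrt Q)$ (with $F_j,F_k$ rewritten in the same coordinates) and using the factorization $(2a_i-t)(1-2a_it)=2a_iP(t)$ with $P(t):=1+t^2-2a_it-t/(2a_i)$, the inequality $\dot F_i\ge 0$ becomes
\[
2(a_j+a_k)\,P(t)\ \ge\ (1-2a_i)(1+t)\sqrt{Q(t)},
\]
and both sides are nonnegative on the locus $Q\ge 0$ (the roots of $P$ are $2a_i$ and $1/(2a_i)$, both lying strictly inside $(t_-,t_+)$, so $P\ge 0$ whenever $Q\ge 0$). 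The decisive algebraic input, and the step I expect to drive the argument, is the perfect-square identity
\[
P(t)^2-(1-4a_i^2)(1+t)^2 Q(t)=\Bigl(2a_iQ(t)-\tfrac{1-4a_i^2}{2a_i}\,t\Bigr)^{\!2},
\]
which I would verify by writing $P=Q+ct$ with $c:=(1-4a_i^2)/(2a_i)$ and using the elementary reduction $t-a_i(1+t)^2=-a_i(Q+2t)$ (immediate from $Q=1+t^2-t/a_i$) to recognize the left-hand side as a perfect square in $Q$. The identity yields $P\ge(1+t)\sqrt{(1-4a_i^2)\,Q}$, so the target inequality follows from $2(a_j+a_k)\sqrt{1-4a_i^2}\ge 1-2a_i$, which is precisely the hypothesis $4(a_j+a_k)^2\ge(1-2a_i)/(1+2a_i)$, i.e., $\theta\ge\theta_i$. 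Applying the argument to each $i\in\{1,2,3\}$ under $\theta\ge\max\{\theta_1,\theta_2,\theta_3\}$ gives forward invariance of $\mathcal R_+$ and proves (1).

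For (2), when the hypothesis fails it is enough to exhibit just one NRF trajectory that stays in $\mathcal R_+$ for all $t\ge 0$; any $G$-invariant Einstein metric with positive Ricci curvature suffices, since such a metric is a fixed point of \eqref{ricciflow} (with $r_i=S/n>0$), so its orbit is constant. The existence of such an Einstein metric on every generalized Wallach space with $a_1+a_2+a_3>1/2$ is a standard fact (one can take a critical point of the scalar-curvature functional on the simplex of unit-volume invariant metrics, whose Einstein constant is forced to be positive by the compact semisimple character of $G$), which completes the proof of (2).
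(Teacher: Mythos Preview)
Your proof is correct, and for part~(1) it runs parallel to the paper's: both compute the derivative of $F_i=\lambda_i$ along the flow on the face $\{F_i=0\}$ and show it is nonnegative there. In the paper's notation this is $(\mathbf V,\nabla\lambda_i)=\tfrac{1}{2a_itx_i^2}(F-G)$ with $F=2a_i(a_j+a_k)P(t)$ and $G=a_i(1-2a_i)(1+t)\sqrt{Q(t)}$, exactly your reduction; the paper then squares to the quartic $h(t)=F^2-G^2$, passes to the quadratic $p(y)$ via $y=t+t^{-1}$, and proves its discriminant $D_i$ satisfies $D_i\le 0\Leftrightarrow\theta\ge\theta_i$. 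Your perfect-square identity $P^2-(1-4a_i^2)(1+t)^2Q=(2a_iQ-ct)^2$ is a more direct route to the same threshold: it is equivalent to writing
\[
h(t)=4a_i^2(a_j+a_k)^2\bigl(2a_iQ-ct\bigr)^2+a_i^2(1+t)^2Q\Bigl[4(a_j+a_k)^2(1-4a_i^2)-(1-2a_i)^2\Bigr],
\]
and the bracket is $\ge 0$ precisely when $\theta\ge\theta_i$.

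For part~(2) you take a genuine shortcut. The paper argues dynamically: when $\theta<\theta_i$ it shows both roots of $p(y)=0$ lie in $(a_i^{-1},\infty)$, so $h(t)$ has a W-shape with two zeros on each component of $r_i$, and the boundary scenario on $r_i$ is ``towards $R$ -- from $R$ -- towards $R$ again''. This is what feeds into Remarks~\ref{rem_270824} and~\ref{rem_060824} and into the case analysis of Theorem~\ref{thm_3}. Your fixed-point argument via an invariant Einstein metric is valid---existence on every generalized Wallach space is established in~\cite{Lomshakov2,Nikonorov2} (your variational parenthetical needs properness of the scalar-curvature functional, which is known for these spaces), and positivity of the Einstein constant follows from Bochner since $G$ is compact semisimple---and it proves assertion~(2) as stated with far less work, but it forfeits the finer boundary picture the paper extracts.
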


Note that  $\theta \ge \max\left\{\theta_1, \theta_2, \theta_3\right\}$ is equivalent to  $4\left(a_j+a_k\right)^2\ge (1-2a_i)(1+2a_i)^{-1}$ announced in abstract
for all $\{i,j,k\}=\{1,2,3\}$.

For the spaces
$\operatorname{SO}(k+l+m)/\operatorname{SO}(k)\times \operatorname{SO}(l)\times \operatorname{SO}(m)$ (GWS~{\bf 1} in Table~1)
which satisfy~$a_1+a_2+a_3>1/2$ we proved
the following special theorem under the assumptions~$k\ge l\ge  m>1$
based on symmetry.

\renewcommand{\arraystretch}{1.6}
\begin{table}[h]\label{tab1}
{\bf Table 1}. The list of generalized Wallach spaces $G/H$ with  $G$ simple
according to~\cite{Nikonorov4}
\begin{center}
\begin{tabular}{l|l|l|c|c|c|rr}
\hline
\hline
 GWS&$\mathfrak{g}$& $\mathfrak{h}$ & $a_1$ & $a_2$ & $a_3$ & $\theta$ \\\hline
${\bf 1}$&$\mathfrak{so}(k+l+m)$& $\mathfrak{so}(k)\oplus\mathfrak{so}(l)\oplus\mathfrak{so}(m)$          &$\frac{k}{2(k+l+m-2)}$  & $\frac{l}{2(k+l+m-2)}$&  $\frac{m}{2(k+l+m-2)}$&$\frac{1}{k+l+m-2}$\\
${\bf 2}$&$\mathfrak{su}(k+l+m)$& $\mathfrak{su}(k)\oplus\mathfrak{su}(l)\oplus\mathfrak{su}(m)$          & $\frac{k}{2(k+l+m)}$  & $\frac{l}{2(k+l+m)}$&  $\frac{m}{2(k+l+m)}$&$0$   \\
{\bf 3}&$\mathfrak{sp}(k+l+m)$& $\mathfrak{sp}(k)\oplus\mathfrak{sp}(l)\oplus\mathfrak{sp}(m)$          &$\frac{k}{2(k+l+m+1)}$&$\frac{l}{2(k+l+m+1)}$&  $\frac{m}{2(k+l+m+1)}$&$\frac{-1}{2(k+l+m+1)}$\\
{\bf 4}&$\mathfrak{su}(2l),~ l\ge 2$& $\mathfrak{u}(l)$
& $\frac{l+1}{4l}$  & $\frac{l-1}{4l}$&  $\frac{1}{4}$ &$1/4$  \\
{\bf 5}&$\mathfrak{so}(2l),~ l\ge 4$& $\mathfrak{u}(1)\oplus \mathfrak{u}(l-1)$
& $\frac{l-2}{4(l-1)}$  & $\frac{l-2}{4(l-1)}$&  $\frac{1}{2(l-1)}$&$0$   \\
{\bf 6}&$\mathfrak{e}_6$& $\mathfrak{su}(4)\oplus 2\mathfrak{sp}(1)\oplus \mathbb{R}$
& $1/4$  & $1/4$&  $1/6$  &$1/6$ \\
{\bf 7}&$\mathfrak{e}_6$& $\mathfrak{so}(8)\oplus  \mathbb{R}^2$
& $1/6$  & $1/6$&  $1/6$  &$0$  \\
{\bf 8}&$\mathfrak{e}_6$& $\mathfrak{sp}(3)\oplus  \mathfrak{sp}(1)$
& $1/4$  & $1/8$&  $7/24$  &$1/6$ \\
{\bf 9}&$\mathfrak{e}_7$& $\mathfrak{so}(8)\oplus  3\mathfrak{sp}(1)$
& $2/9$  & $2/9$&  $2/9$ &$1/6$  \\
{\bf 10}&$\mathfrak{e}_7$& $\mathfrak{su}(6)\oplus \mathfrak{sp}(1)\oplus \mathbb{R}$
& $2/9$  & $1/6$&  $5/18$  &$1/6$ \\
{\bf 11}&$\mathfrak{e}_7$& $\mathfrak{s0}(8)$
& $5/18$  & $5/18$&  $5/18$  &$1/3$ \\
{\bf 12}&$\mathfrak{e}_8$& $\mathfrak{so}(12)\oplus  2\mathfrak{sp}(1)$
& $1/5$  & $1/5$&  $4/15$  &$1/6$ \\
{\bf 13}&$\mathfrak{e}_8$& $\mathfrak{so}(8)\oplus  \mathfrak{so}(8)$
& $4/15$  & $4/15$&  $4/15$  &$3/10$ \\
{\bf 14}&$\mathfrak{f}_4$& $\mathfrak{so}(5)\oplus  2\mathfrak{sp}(1)$
& $5/18$  & $5/18$&  $1/9$ &$1/6$  \\
{\bf 15}&$\mathfrak{f}_4$& $\mathfrak{so}(8)$
& $1/9$  & $1/9$&  $1/9$  &$-1/6$ \\
\hline\hline
\end{tabular}
\end{center}
\end{table}

\begin{theorem}\label{thm_3}
The following assertions hold  for $\operatorname{SO}(k+l+m)/\operatorname{SO}(k)\times \operatorname{SO}(l)\times \operatorname{SO}(m)$,
 $k\ge l\ge  m>1$ (see also Table~2):

\begin{enumerate}
\item
The normalized Ricci flow~\eqref{ricciflow}  evolves all metrics~\eqref{metric}  with $\operatorname{Ric}>0$
into metrics with $\operatorname{Ric}>0$   if either  $k\le 11$ or one of
the conditions  $2<l+m\le X(k)$ or  $l+m\ge Y(k)$ is satisfied for
$k\in \{12, 13,14,15,16\}$,
\item
The normalized Ricci flow~\eqref{ricciflow}  evolves some metrics~\eqref{metric} with $\operatorname{Ric}>0$  into metrics with $\operatorname{Ric}>0$ if
$X(k)<l+m<Y(k)$  for  $k\ge 12$.
\item
There exists a finite number  of $\operatorname{GWS}$~{\bf 1} on which any metric with $\operatorname{Ric}>0$ maintains  $\operatorname{Ric}>0$  under NRF~\eqref{ricciflow}.
But there are infinitely (countably)
many $\operatorname{GWS}$~{\bf 1} on which  $\operatorname{Ric}>0$ can be kept
at least for some metrics with $\operatorname{Ric}>0$,
\end{enumerate}
where
$X(k)=\frac{2k(k-2)}{k+2+\sqrt{k^2-12k+4}}-k+2$,
$Y(k)=\frac{2k(k-2)}{k+2-\sqrt{k^2-12k+4}}-k+2$ and
$2<X(k)<Y(k)$ for all $k\ge 12$.
\end{theorem}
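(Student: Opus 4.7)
The plan is to apply Theorem~\ref{thm_sum_a_i>1/2} to GWS~\textbf{1}, whose parameters satisfy
$$
a_i = \frac{k_i}{2(k+l+m-2)}, \qquad a_1+a_2+a_3 = \frac{1}{2} + \frac{1}{k+l+m-2} > \frac{1}{2},
$$
with $(k_1,k_2,k_3) = (k,l,m)$, so $\theta = 1/(k+l+m-2) > 0$ and only the sufficient condition $\theta \ge \max_i \theta_i$ requires analysis. Substituting these $a_i$ into the equivalent form $4(a_j+a_k)^2 \ge (1-2a_i)/(1+2a_i)$, setting $q_i := k_j+k_k$ and $s := k+l+m-2 = k_i + q_i - 2$, the $i$-th condition $q_i^2(s+k_i) \ge s^2(q_i-2)$ simplifies, after eliminating $s$, to the quadratic
$$
4\, q_i^2 - (k_i-2)(k_i-6)\, q_i + 2(k_i-2)^2 \ge 0.
$$
Its discriminant $(k_i-2)^2(k_i^2 - 12 k_i + 4)$ is negative precisely for integer $k_i \in \{2,\dots,11\}$, so the inequality holds unconditionally in that range. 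For $k_i \ge 12$, a rationalization of the denominators in the statement shows that the two real roots coincide with $X(k_i)$ and $Y(k_i)$, and the inequality holds iff $q_i \le X(k_i)$ or $q_i \ge Y(k_i)$.

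To prove part (1) under $k \ge l \ge m > 1$, I combine the three conditions. If $k \le 11$, then all $k_i \le 11$ and nothing needs checking. If $k \in \{12,\dots,16\}$ with $l + m \le X(k)$, then $X$ being decreasing on $\{12,13,\dots\}$ with $X(12)=5$ forces $l + m \le 5$, hence $l,m \le 3$, making the $i=2,3$ conditions automatic. If instead $l + m \ge Y(k)$, monotonicity (increasing) of $Y$ on $\{12, 13,\dots\}$ yields $k + m \ge l + m \ge Y(k) \ge Y(l)$ (trivial when $l \le 11$), verifying the $i=2$ condition; the $i=3$ case is symmetric. Theorem~\ref{thm_sum_a_i>1/2}(1) then yields the claim.

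For part (2), when $X(k) < l + m < Y(k)$ for $k \ge 12$, the $i = 1$ quadratic is negative and the sufficient condition $\theta \ge \max_i \theta_i$ fails, but $a_1+a_2+a_3 > 1/2$ still holds, so Theorem~\ref{thm_sum_a_i>1/2}(2) supplies at least some positive-Ricci metrics whose positivity persists. Part (3) follows by counting: the triples satisfying the conditions of part~(1) are contained in the set $\{k \le 11\} \cup \{k \in \{12,\dots,16\}, l,m \le k\}$, which is evidently finite; while Theorem~\ref{thm_sum_a_i>1/2}(2) applies to each of the infinitely many admissible triples $(k,l,m)$, giving the countably infinite "some metrics" claim. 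The cutoff $k \le 16$ in part~(1) is dictated by the fact that $Y(k) > 2k$ for $k \ge 17$, which after squaring reduces to positivity of the cubic $k^3 - 17 k^2 + 16 k - 4$ at $k = 17$ and beyond.

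The principal technical obstacle is showing that, under $k \ge l \ge m$, the $i=1$ condition propagates to $i=2,3$; this rests on the monotonicity of $X$ and $Y$ on $\{12, 13, \ldots\}$, transparent from the explicit formulas but needing explicit verification. The integer thresholds $k = 11$ (where $k^2 - 12 k + 4$ becomes negative) and $k = 16$ (the last $k$ with $Y(k) \le 2k$) emerge from elementary root analysis of univariate polynomials; together with the observation $X(k) < 4$ for $k \ge 13$, they determine exactly when each alternative in part~(1) is feasible.
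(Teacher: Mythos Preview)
Your proposal is correct and follows essentially the same strategy as the paper: reduce to Theorem~\ref{thm_sum_a_i>1/2}, translate the condition $\theta\ge\theta_i$ into a quadratic inequality whose discriminant $(k_i-2)^2(k_i^2-12k_i+4)$ yields the threshold $k_i\le 11$, identify the roots with $X(k_i)$ and $Y(k_i)$ for $k_i\ge 12$, and then argue the cutoff $k\le 16$ from $Y(k)>2k$ for $k\ge 17$.

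A few technical choices differ and are worth noting. The paper parametrizes by $\theta$ and studies the quadratic $T(\theta)=k(k-2)^2\theta^2-(k^2-4)\theta+4$, then invokes Sturm's theorem to locate its roots in $(0,1/k)$; your direct change of variable to $q_i=k_j+k_k$ produces the equivalent quadratic $4q_i^2-(k_i-2)(k_i-6)q_i+2(k_i-2)^2$ and bypasses the Sturm step entirely, since the roots are seen at once (after rationalization) to equal $X(k_i)$ and $Y(k_i)$. For the cutoff $k\le 16$, the paper introduces $Z(k)=\widetilde{Y}(k)-(3k-2)$ (which equals $Y(k)-2k$), shows $Z'(k)>0$ by a somewhat involved calculus argument, and then checks $Z(16)<0<Z(17)$ numerically; your reduction of $Y(k)>2k$ to positivity of the cubic $k^3-17k^2+16k-4$ is more self-contained. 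Finally, for propagating the $i=1$ condition to $i=2,3$ in the case $l+m\ge Y(k)$, your one-line chain $k+m\ge l+m\ge Y(k)\ge Y(l)$ (using only that $Y$ is increasing) is cleaner than the paper's route through $\widetilde{Y}(k)\le 3k-2$. Both arguments rely on the monotonicity of $X$ and $Y$, which the paper verifies via explicit derivatives; you flag this as needing verification but do not carry it out, so that step should be filled in.
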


\renewcommand{\arraystretch}{1.6}
\begin{table}[h]\label{tab1}
{\bf Table 2}.  Behavior of NRF~\eqref{ricciflow} on $\operatorname{GWS}$ {\bf 1}  for
$k\ge 12$ and $k\ge \max\{l,m\}$
\begin{center}
\begin{tabular}{c|c|c}
\hline
\hline
&$X(k)<l+m<Y(k)$& $2<l+m\le X(k)$~ or~ $l+m\ge Y(k)$ \\\hline
$12\le k\le 16$&Some metrics with $\operatorname{Ric}>0$ remain
$\operatorname{Ric}>0$ & All metrics with $\operatorname{Ric}>0$ remain
$\operatorname{Ric}>0$  \\
$k\ge 17$&Some metrics with $\operatorname{Ric}>0$ remain
$\operatorname{Ric}>0$ &   $-$     \\
\hline\hline
\end{tabular}
\end{center}
\end{table}

Theorems~\ref{thm_sum_a_i<1/2} and \ref{thm_sum_a_i>1/2} imply

\begin{corollary}\label{corol_1}
Under the normalized Ricci flow~\eqref{ricciflow} the positivity of the Ricci curvature
\begin{enumerate}
\item  is not preserved  on
            $\operatorname{GWS}$\; {\bf 2, 3, 5, 7, 15};
\item  is preserved on
            $\operatorname{GWS}$\; {\bf 4, 6, 8, 9, 10, 11, 12, 13, 14}.
\end{enumerate}
\end{corollary}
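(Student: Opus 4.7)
The plan is to reduce the corollary directly to the dichotomy given by Theorems~\ref{thm_sum_a_i<1/2} and~\ref{thm_sum_a_i>1/2}. The first theorem forces non-preservation of $\operatorname{Ric}>0$ whenever $a_1+a_2+a_3\le 1/2$ (equivalently $\theta\le 0$), while part~(1) of the second theorem guarantees preservation whenever $\theta>0$ and the three inequalities $4(a_j+a_k)^2\ge (1-2a_i)(1+2a_i)^{-1}$, $\{i,j,k\}=\{1,2,3\}$, are satisfied. So the proof splits into two case analyses indexed by the rows of Table~1, using the $\theta$-column as the first filter.

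For assertion~(1) I would simply read off from Table~1 that $\theta=0$ for GWS~2,~5,~7, that $\theta=-\tfrac{1}{2(k+l+m+1)}<0$ for GWS~3, and that $\theta=-1/6$ for GWS~15. In each case $a_1+a_2+a_3\le 1/2$, so Theorem~\ref{thm_sum_a_i<1/2} applies verbatim and yields the failure of preservation.

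For assertion~(2) the column $\theta$ in Table~1 already gives $\theta>0$ for GWS~4,~6,~8,~9,~10,~11,~12,~13,~14, so Theorem~\ref{thm_sum_a_i>1/2} is applicable and it only remains to check the three rational inequalities above. For the isolated spaces (GWS~6,~8--14) these are a finite set of numerical comparisons (for instance $1\ge 1/2$ at the symmetric index of GWS~6, or $100/81\ge 2/7$ for GWS~11), all of which hold. The only non-finite verification is the family GWS~4 with $a_1=(l+1)/(4l)$, $a_2=(l-1)/(4l)$, $a_3=1/4$, $l\ge 2$; here the three inequalities reduce to elementary polynomial inequalities in $l$ such as
\[
8l^3-4l^2-l+1\ge 0, \qquad 8l^3+4l^2-l-1\ge 0,
\]
which are clearly positive for $l\ge 2$, together with the trivial $1\ge 1/3$ coming from the $a_3$-inequality. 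Given these checks, Theorem~\ref{thm_sum_a_i>1/2}(1) delivers preservation of $\operatorname{Ric}>0$ for all nine spaces, completing the proof. The main (and essentially only) obstacle is the routine bookkeeping of the three symmetric inequalities across all nine rows; no further analytical work is needed since both underlying theorems have already been established.
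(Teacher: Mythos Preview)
Your proposal is correct and follows essentially the same route as the paper: split by the sign of $\theta$ from Table~1, invoke Theorem~\ref{thm_sum_a_i<1/2} when $\theta\le 0$, and for $\theta>0$ verify case by case the condition of Theorem~\ref{thm_sum_a_i>1/2}(1), detailing only the one-parameter family GWS~{\bf 4}. The only cosmetic difference is that the paper checks the equivalent form $\theta\ge\max\{\theta_1,\theta_2,\theta_3\}$ rather than the inequalities $4(a_j+a_k)^2\ge(1-2a_i)(1+2a_i)^{-1}$ you use, and carries out the GWS~{\bf 4} verification by writing $\theta_1,\theta_2,\theta_3$ explicitly in $l$ instead of your polynomial reductions.
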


\section{Preliminaries}\label{defGWS}

In~\cite{Nikonorov2} the following explicit expressions
 $\operatorname{Ric}_{\bold{g}}=\left.{\bf r}_1\, \operatorname{Id} \right|_{\mathfrak{p}_1}+
\left.{\bf r}_2\, \operatorname{Id} \right|_{\mathfrak{p}_2}+
\left.{\bf r}_3\, \operatorname{Id} \right|_{\mathfrak{p}_3}$
and
$S_{\bold{g}}=d_1{\bf r}_1+d_2{\bf r}_2+d_3{\bf r}_3$
were found for the Ricci tensor $\operatorname{Ric}_{\bold{g}}$ and
the scalar curvature  $S_{\bold{g}}$  of the metric~\eqref{metric}
on  a generalized Wallach space~$G/H$,
where
$
{\bf r}_i:=\frac{1}{2x_i}+\frac{a_i}{2}\left(\frac{x_i}{x_jx_k}-\frac{x_k}{x_ix_j}-
\frac{x_j}{x_ix_k} \right)
$
are the principal Ricci curvatures,
$d_i$ are  the dimensions of the modules $\mathfrak{p}_i$ such that $d_1+d_2+d_3=n$ and
$a_i$ are real numbers in the interval $(0,1/2]$ for $\{i,j,k\}=\{1,2,3\}$.
Substituting the above mentioned expressions for $\operatorname{Ric}_{\bold{g}}$ and
 $S_{\bold{g}}$  into~\eqref{ricciflow} it can be reduced
 to the following system of three   ordinary differential equations
studied in~\cite{AANS1, AANS2}:
\begin{equation}\label{three_equat}
\dfrac {dx_1}{dt} = f_1(x_1,x_2,x_3), \quad
\dfrac {dx_2}{dt}=f_2(x_1,x_2,x_3), \quad
\dfrac {dx_3}{dt}=f_3(x_1,x_2,x_3),
\end {equation}
where $x_i=x_i(t)>0$ $(i=1,2,3)$, are parameters of the invariant metric \eqref{metric} and
\begin{eqnarray*}
f_1(x_1,x_2,x_3)&=&-1-a_1x_1 \left( \dfrac {x_1}{x_2x_3}-  \dfrac {x_2}{x_1x_3}- \dfrac {x_3}{x_1x_2} \right)+x_1B,\\
f_2(x_1,x_2,x_3)&=&-1-a_2x_2 \left( \dfrac {x_2}{x_1x_3}- \dfrac {x_3}{x_1x_2} -  \dfrac {x_1}{x_2x_3} \right)+x_2B,\\
f_3(x_1,x_2,x_3)&=&-1-a_3x_3 \left( \dfrac {x_3}{x_1x_2}-  \dfrac {x_1}{x_2x_3}- \dfrac {x_2}{x_1x_3} \right)+x_3B,
\end{eqnarray*}
$$
B:=\left(\sum_{i=1}^3 \dfrac {1}{a_ix_i}- \sum_{\substack {\{i,j,k\}=\{1,2,3\}\\ j< k}}
\dfrac {x_i}{x_jx_k}  \right)
\left(\sum_{i=1}^3a_i^{-1} \right)^{-1}.
$$

\medskip

Recall that the function  $\operatorname{Vol}=c$ is a first integral of the system~\eqref{three_equat} for any $c>0$,
where
$
\operatorname{Vol}:=x_1^{1/a_1}x_2^{1/a_2}x_3^{1/a_3}
$
(see~\cite{Ab_RM}). The surface  $\operatorname{Vol}=1$ is  important
which corresponds to parameters of unite volume metrics. Denote it $\Sigma$.
The case~$\operatorname{Vol}=c$ could actually be reduced to the case  $c=1$
using homogeneity of the functions $f_i$ and autonomy property in~\eqref{three_equat}.
This could be made by introducing new variables $x_i=\widetilde{x}_i\sqrt[3]{c}$ and $t=\tau\sqrt[3]{c}$, then
we get a new system of ODEs in  new functions $\widetilde{x}_i(\tau)$ but of the same form as the original one~\eqref{three_equat} (see also \cite{Ab_RM}).
Therefore we can consider metrics with $\operatorname{Vol}=1$ without loss of generality.
Being  a first integral of the system~\eqref{three_equat}
the identity $\operatorname{Vol}=1$ allows to reduce~\eqref{three_equat}  to the following system of two ODEs
(see details in~\cite{AANS1}):
 \begin{equation}\label{two_equat}
\dfrac {dx_1}{dt} = \widetilde{f}_1(x_1,x_2), \qquad
\dfrac {dx_2}{dt}=\widetilde{f}_2(x_1,x_2),
\end {equation}
where
$\widetilde{f}_i(x_1,x_2):=f_i(x_1,x_2, \varphi(x_1,x_2))$,\;
$\varphi(x_1,x_2):=x_1^{-a_3/a_1}x_2^{-a_3/a_2}$,\; $i=1,2$.

Introduce   surfaces $\Lambda_i$ in $(0,+\infty)^3$ defined  by the equations
$\lambda_i:=a_i\left(x_i^2-x_j^2-x_k^2\right)+x_jx_k=0$ (equivalent to $\bold{r}_i=0$)
for    $\{i,j,k\}=\{1,2,3\}$. Denote by $R$ the set
of points $(x_1,x_2,x_3)\in (0,+\infty)^3$ satisfying the inequalities $\lambda_i>0$ for all $i=1,2,3$.
Then $\operatorname{Ric}_{\bold{g}}>0$ is equivalent to  $(x_1,x_2,x_3)\in R$.

\section{Auxiliary results}

Introduce the parameter
$\omega:=\left(a_i^{-1}+a_j^{-1}+a_k^{-1}\right)^{-1}$.
We will often use real roots
\begin{equation}\label{m_and_M}
m_i=m(a_i):=\frac{1-\sqrt{1-4a_i^2}}{2a_i} \mbox{~~ and ~~}
M_i=M(a_i):=\frac{1+\sqrt{1-4a_i^2}}{2a_i}
\end{equation}
 of the quadratic equation
$t^2-a_i^{-1}t+1=0$ which depend on the parameter~$a_i$.
Clearly $M_i=m_i^{-1}$.
We also need the following obvious inequalities for all $a_i\in (0,1/2)$:
\begin{equation}\label{tlar}
0<a_i<m_i<2a_i<1<\frac{1}{2a_i}<M_i<\frac{1}{a_i}.
\end{equation}

\subsection{Structural properties of some surfaces and curves
related to the set of metrics $\operatorname{Ric}>0$}

\begin{figure}[h]
\centering
\includegraphics[width=0.9\textwidth]{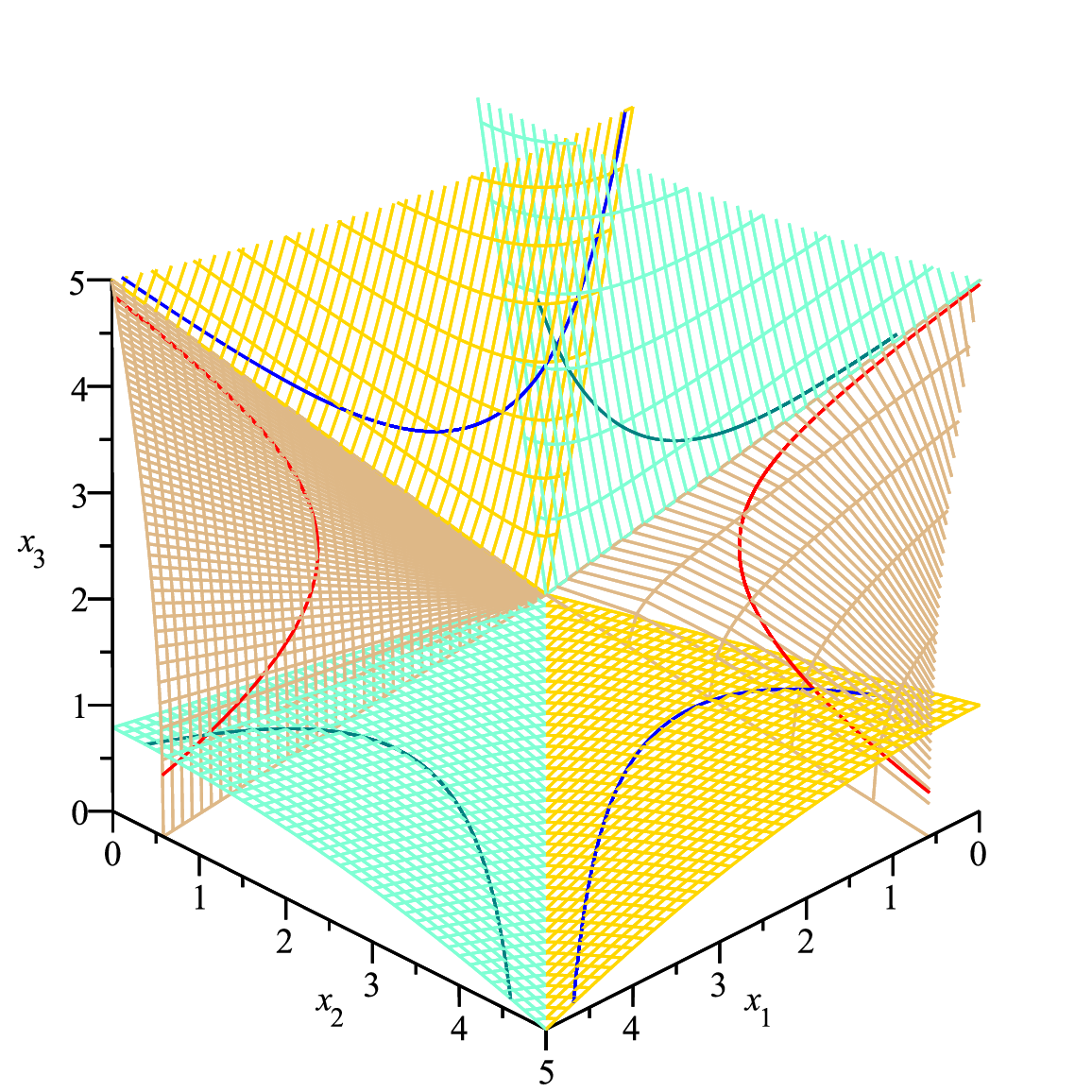}
\caption{The cones $\Lambda_1, \Lambda_2$ and $\Lambda_3$ (in yellow, aquamarine and burlywood colors) at $a_1=5/26$, $a_2=4/26$, $a_3=3/26$}
\label{pic10}
\end{figure}

\begin{lemma}\label{Delta_Conic_gen}
For an arbitrary generalized Wallach space with $a_1,a_2,a_3\in (0,1/2)$ the set $R$
is bounded by the conic surfaces
$\Lambda_1$, $\Lambda_2$ and $\Lambda_3$,
each consisting of two connected components.
Moreover,  for all $a_i,a_j\in (0,1/2)$
the cones  $\Lambda_i$ and $\Lambda_j$
intersect each other along two straight lines:
the first of them  is defined  by $x_i=x_j$, $x_k=0$,
and the second one has an equation
\begin{equation}\label{commonline}
x_i=up, \qquad x_j=vp, \qquad x_k=p>0, \qquad \{i,j,k\}=\{1,2,3\},
\end{equation}
where
\begin{equation}\label{alpha_beta}
\begin{array}{l}
u=\Phi(a_i,a_j):=
\begin{cases}
\phi(a_i,a_j),  &\mbox{if}
\quad a_i\ne  a_j,\\
a_j,  &\mbox{if} \quad a_i=a_j,
\end{cases} \\
v=\Psi(a_i,a_j):=
\begin{cases}
\psi(a_i,a_j),  &\mbox{if}
\quad a_i\ne a_j,\\
a_i,  &\mbox{if} \quad a_i=a_j,
\end{cases}
\end{array}
\end{equation}
with~ $\phi(a_i,a_j):=\dfrac{1}{2}\dfrac{4a_i^2-1+\sqrt{\Delta_{ij}}}{a_i^2-a_j^2}\, a_j>0$, \;
$\psi(a_i,a_j):=\dfrac{1}{2}\dfrac{4a_j^2-1+\sqrt{\Delta_{ij}}}{a_j^2-a_i^2}\, a_i>0$ and
$\Delta_{ij}:=\left(1-4a_i^2\right)\left(1-4a_j^2\right)>0$.
See Fig.~\ref{pic10} for illustrations.
\end{lemma}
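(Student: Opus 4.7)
The plan is to split the assertion into two pieces: first, that each $\Lambda_i$ is a cone in $(0,+\infty)^3$ with exactly two connected components; second, that $\Lambda_i\cap\Lambda_j$ consists of precisely the two stated lines. Since $\lambda_i$ is homogeneous of degree two, $\Lambda_i$ is automatically a cone with apex at the origin, so I only need to analyse its intersection with the slice $\{x_k=1\}$. Solving $\lambda_i=0$ for $x_i^2$ gives $x_i^2=x_j^2+x_k^2-x_jx_k/a_i$, and setting $x_k=1$ turns the right-hand side into the parabola $y^2-y/a_i+1$ in $y=x_j$ whose roots are exactly $m_i$ and $M_i$ from~\eqref{m_and_M}. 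Since $a_i\in(0,1/2)$ forces $1-4a_i^2>0$, these roots are real and positive, and the parabola is strictly positive on the two disjoint intervals $(0,m_i)$ and $(M_i,+\infty)$; on each of them $x_i>0$ is uniquely recovered as a square root, producing two disjoint arcs in the slice and hence two connected components of $\Lambda_i\cap(0,+\infty)^3$.

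For $\Lambda_i\cap\Lambda_j$ I form the combination $a_j\lambda_i+a_i\lambda_j$: the quartic parts $\pm a_ia_j(x_i^2-x_j^2)$ cancel and what remains factors as
\[
 x_k\bigl(a_ix_i+a_jx_j-2a_ia_jx_k\bigr)=0,
\]
so every common zero of $\lambda_i,\lambda_j$ lies either on $\{x_k=0\}$ or on the hyperplane $a_ix_i+a_jx_j=2a_ia_jx_k$. Combining the first branch with $\lambda_i=0$ reduces to $a_i(x_i^2-x_j^2)=0$, which inside $(0,+\infty)^3$ yields the first asserted line $x_i=x_j$, $x_k=0$ (the root $x_i=-x_j$ is discarded by positivity). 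On the second branch I parametrise by $u=x_i/x_k$, $v=x_j/x_k$ with $x_k=p>0$, substitute $v=a_i(2a_j-u)/a_j$ from the linear relation into $\lambda_i/x_k^2=0$, and obtain the quadratic
\[
 (a_j^2-a_i^2)u^2+a_j(4a_i^2-1)u+a_j^2(1-4a_i^2)=0,
\]
whose discriminant telescopes to $a_j^2(1-4a_i^2)(1-4a_j^2)=a_j^2\Delta_{ij}>0$; matching signs with~\eqref{alpha_beta} identifies the surviving root as $u=\Phi(a_i,a_j)$, and by the $i\leftrightarrow j$ symmetry of the system the corresponding $v$ equals $\Phi(a_j,a_i)=\Psi(a_i,a_j)$.

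The remaining task is to eliminate the conjugate root of the quadratic, which I handle by a sign split on $a_i-a_j$: when $a_i>a_j$ the conjugate value of $u$ is itself negative, while when $a_i<a_j$ it is positive but its $v$-partner $a_i(2a_j-u)/a_j$ is negative, because evaluating the quadratic at $u=2a_j$ gives $a_j^2(4a_j^2-1)<0$, forcing $2a_j$ to lie strictly between the two roots of the upward parabola in $u$. The degenerate case $a_i=a_j$ is treated separately: the quadratic collapses to a linear equation whose unique root is $u=a_j$, consistent with $\Phi(a_i,a_i)=a_i$ and with $v=a_i$ from the linear relation. The main obstacle is precisely this sign bookkeeping: ensuring uniformly in $a_i,a_j\in(0,1/2)$ that exactly one of the two algebraic roots survives in the positive octant and coincides with the closed form $\Phi(a_i,a_j)$ announced in~\eqref{alpha_beta}, while all the other routine algebra (homogeneity, factoring, discriminant telescoping) is essentially mechanical.
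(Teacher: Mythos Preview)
Your proposal is correct and follows essentially the same route as the paper: both analyse the slice parabola $y^2-y/a_i+1$ to get the two components, both derive the linear relation $a_iu+a_jv=2a_ia_j$ (the paper obtains it from the system, you from the combination $a_j\lambda_i+a_i\lambda_j$, which is the same computation), both reduce to the identical quadratic in~$u$, and both split into the cases $a_i\gtrless a_j$ to discard the spurious root. Your trick of evaluating the quadratic at $u=2a_j$ to locate which root gives $v>0$ in the case $a_i<a_j$ is in fact more explicit than the paper, which simply asserts that ``the corresponding~$v^{(2)}$ occurs negative''; conversely, in the case $a_i>a_j$ the paper spells out why the conjugate $u$ is negative via $4a_i^2-1-\sqrt{\Delta_{ij}}<0$, while you state this without justification---but it is immediate from the sign of the product of the roots.
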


\begin{proof}
The inequality  $\lambda_i=a_i(x_i^2-x_j^2-x_k^2)+x_jx_k>0$ is equivalent to
$x_i^2>T:=x_j^2-a_i^{-1}x_jx_k+x_k^2$.
Any $x_i>0$ satisfies $\lambda_i>0$ if  $T\le 0$.
If  $T>0$ then  $\lambda_i>0$  is equivalent to $x_i> \sqrt{T}$.
Considering $T>0$ as a quadratic inequality with respect to $x_j$
which admits solutions   $0<x_j<m_ix_k$ or $x_j>M_ix_k$
we conclude that the set~$R$ is bounded by the coordinate planes $x_1=0$, $x_2=0$, $x_3=0$
and by two components of the cone $\Lambda_i$,
defined by the same equation $x_i=\sqrt{T}$ in two different domains
$\left\{x_k>0, \, 0<x_j<m_ix_k\right\}$ and
$\left\{x_k>0, \, x_j>M_ix_k\right\}$ of the coordinate plane $(x_k,x_j)$.

Due to symmetry analogous assertions can be obtained for the surfaces
$\Lambda_j$ and $\Lambda_k$  using permutations of indices.
Thus  we established that $R$ is bounded by the cones $\Lambda_1$, $\Lambda_2$ and $\Lambda_3$.
To find common points of the surfaces $\Lambda_i$ and $\Lambda_j$
consider the system of equations
$\lambda_i=a_i\left(x_i^2-x_j^2-x_k^2\right)+x_jx_k=0$ and
$\lambda_j=a_j\left(x_j^2-x_i^2-x_k^2\right)+x_ix_k=0$.
We can pass to a new system
$a_i(u^2-v^2-1)+v=0$, $a_j(v^2-u^2-1)+u=0$,
where $x_i=ux_k$, $x_j=vx_k$.
It follows then  $a_iu+a_jv-2a_ia_j=0$ or
$v=a_ia_j^{-1}(2a_j-u)$.
Substituting this expression into one of the  previous equations,
we obtain a quadratic equation with respect to~$u$:
$$
\left(a_i^2-a_j^2\right)u^2-\left(4a_i^2-1\right)a_ju+\left(4a_i^2-1\right)a_j^2=0.
$$

The case  $a_i=a_j$ leads  to a solution $u=a_j$. It follows then $v=a_i$.

Assume that  $a_i\ne a_j$.
{\it The case $a_i>a_j$}. Then $4a_i^2-1+\sqrt{\Delta_{ij}}>0$ and
the latter quadratic equation admits a positive root $u^{(1)}$,
equal to  $\phi(a_i,a_j)$ shown in~\eqref{alpha_beta}.
Another root $u^{(2)}$, associated with the summand $-\sqrt{\Delta_{ij}}$
is out of our interest because of
$4a_i^2-1-\sqrt{\Delta_{ij}}=-(1-4a_i^2+\sqrt{\Delta_{ij}})<0$
independently on $a_i$ and $a_j$.
Substituting $u^{(1)}=\phi(a_i,a_j)$
into
$
v^{(1)}=a_ia_j^{-1}\left(2a_j-u^{(1)}\right)
$
gives~$v^{(1)}=\psi(a_i,a_j)$ in~\eqref{alpha_beta}.

{\it The case $a_i<a_j$}. In this case  $u^{(1)}>0$ since $4a_i^2-1+\sqrt{\Delta_{ij}}<0$.
The second root~$u^{(2)}$ we reject again although it is positive,
because the corresponding~$v^{(2)}$  occurs negative.

Summing   $a_j\lambda_i=0$ and  $a_i\lambda_j=0$
we can find another family of solutions $x_i=x_j$, $x_k=0$
of the system of $\lambda_i=0$ and $\lambda_j=0$
which will also be useful in sequel.
It should be noted that there are also negative solutions of the system of $\lambda_i=0$ and $\lambda_j=0$
non permissible for our aims.
Lemma~\ref{Delta_Conic_gen} is proved.
\end{proof}

\begin{lemma}\label{alpa_beta_ineq}
Assume that $0<a_j<a_i<1/2$. Then  $u$ and
$v$ in~\eqref{alpha_beta}
satisfy $0<\widetilde{u} < 1 < \widetilde{v}$,
where $\widetilde{u}:=u a_j^{-1}$ and $\widetilde{v}:=v a_i^{-1}$.
\end{lemma}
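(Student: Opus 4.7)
The plan is to eliminate $a_i, a_j$ from the formulas via the substitution $p := 1-4a_i^2$ and $q := 1-4a_j^2$. Since $0<a_j<a_i<1/2$, both $p$ and $q$ lie in $(0,1)$, and $p<q$. The key arithmetic identity is
\[
a_i^2 - a_j^2 \;=\; \frac{(1-4a_j^2) - (1-4a_i^2)}{4} \;=\; \frac{q-p}{4},
\]
which lets me rewrite the denominators of $\phi(a_i,a_j)$ and $\psi(a_i,a_j)$ in a form homogeneous in $p,q$. Simultaneously the numerators simplify by pulling out a square root: $4a_i^2 - 1 + \sqrt{\Delta_{ij}} = -p + \sqrt{pq} = \sqrt{p}\bigl(\sqrt{q}-\sqrt{p}\bigr)$ and, symmetrically, $4a_j^2 - 1 + \sqrt{\Delta_{ij}} = -q + \sqrt{pq} = -\sqrt{q}\bigl(\sqrt{q}-\sqrt{p}\bigr)$.

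Combining these with the factorization $q-p = (\sqrt{q}-\sqrt{p})(\sqrt{q}+\sqrt{p})$, the common factor $\sqrt{q}-\sqrt{p}$ cancels from numerator and denominator (and the minus sign in $\widetilde{v}$ cancels against the sign of $a_j^2 - a_i^2 < 0$), yielding the clean formulas
\[
\widetilde{u} \;=\; \frac{u}{a_j} \;=\; \frac{2\sqrt{p}}{\sqrt{p}+\sqrt{q}}, \qquad
\widetilde{v} \;=\; \frac{v}{a_i} \;=\; \frac{2\sqrt{q}}{\sqrt{p}+\sqrt{q}}.
\]

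Both quantities are manifestly positive, and the desired bounds reduce to the tautology $\sqrt{p}<\sqrt{q}$: indeed $\widetilde{u}<1$ iff $2\sqrt{p}<\sqrt{p}+\sqrt{q}$ iff $\sqrt{p}<\sqrt{q}$, and $\widetilde{v}>1$ iff $2\sqrt{q}>\sqrt{p}+\sqrt{q}$ iff $\sqrt{q}>\sqrt{p}$, both of which follow from $p<q$. There is no genuine obstacle here; the only care is the bookkeeping of signs in the $\widetilde{v}$ computation (both numerator and denominator are negative before cancellation), and confirming that the $\sqrt{q}-\sqrt{p}$ factor one cancels in the numerator is nonzero, which is guaranteed by the strict inequality $a_j<a_i$. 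As a sanity check, note that $\widetilde{u}+\widetilde{v}=2$, consistent with the symmetric role of $a_i$ and $a_j$ in the original equation $a_i u + a_j v = 2a_ia_j$ found in the proof of Lemma~\ref{Delta_Conic_gen}.
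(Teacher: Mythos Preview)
Your proof is correct. The substitution $p=1-4a_i^2$, $q=1-4a_j^2$ is well chosen: it factors both numerator and denominator so that $\sqrt{q}-\sqrt{p}$ cancels, leaving the closed forms $\widetilde{u}=\dfrac{2\sqrt{p}}{\sqrt{p}+\sqrt{q}}$ and $\widetilde{v}=\dfrac{2\sqrt{q}}{\sqrt{p}+\sqrt{q}}$, from which the inequalities are immediate. The sign bookkeeping in the $\widetilde{v}$ computation is handled correctly, and the sanity check $\widetilde{u}+\widetilde{v}=2$ (which indeed follows from $a_iu+a_jv=2a_ia_j$) is a nice confirmation.

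The paper's own argument is different in execution though equivalent in spirit: it stays in the original variables and shows that each of $\widetilde{u}<1$ and $\widetilde{v}>1$ is equivalent to $\sqrt{\Delta_{ij}}<1-2(a_i^2+a_j^2)$, which after squaring reduces to $(a_i-a_j)^2(a_i+a_j)^2>0$. Your route yields more: explicit closed forms for $\widetilde{u}$ and $\widetilde{v}$ that make positivity and the ordering transparent without any squaring step, and the identity $\widetilde{u}+\widetilde{v}=2$ drops out for free. The paper's route is shorter to write but less informative; yours trades a little extra setup for a cleaner payoff.
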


\begin{proof}
For all $a_i\ne a_j$ both of the inequalities
$\widetilde{u}<1$ and
$\widetilde{v}>1$ are equivalent to
$\sqrt{\Delta_{ij}}<1-2\left(a_i^2+a_j^2\right)$ or the same
$(a_i-a_j)^2(a_i+a_j)^2>0$ after raising to square
based on   $1-2\left(a_i^2+a_j^2\right)>0$.
Lemma~\ref{alpa_beta_ineq} is proved.
\end{proof}

\medskip

Denote by $\Lambda_{ij}$ and $\Lambda_{ji}$  connected components of the cones  $\Lambda_{i}$ and $\Lambda_{j}$ respectively,
which contain the common straight line~\eqref{commonline}.
For  components of $\Lambda_{i}$ and $\Lambda_{j}$ meeting each other along  $x_i=x_j$, $x_k=0$
accept  notations $\Lambda_{ik}$ and $\Lambda_{jk}$ respectively.
Then each curve $r_i$ formed as   $\Sigma \cap \Lambda_i$
consists of two connected  components
$r_{ij}:=\Sigma \cap \Lambda_{ij}$ and $r_{ik}:=\Sigma\cap \Lambda_{ik}$.
By analogy  $r_j:=\Sigma\cap \Lambda_j=r_{ji}\cup r_{jk}$, where
$r_{ji}:=\Sigma \cap \Lambda_{ji}$ and $r_{jk}:=\Sigma\cap \Lambda_{jk}$.

\begin{lemma}\label{param_r_i}
For an arbitrary generalized Wallach space with $a_1,a_2,a_3\in (0,1/2)$
each curve $r_i=\Lambda_i \cap \Sigma$
consists of two connected components $r_{ij}$ and $r_{ik}$, which are smooth curves
and can be represented by parametric  equations
\begin{equation}\label{parcurvegen}
\begin{array}{l}
x_i=\phi_i (t):=t^{-\omega a_j^{-1}}
\left(\sqrt{t^2-a_i^{-1}t+1}\right)^
{\omega \left(a_j^{-1}+a_k^{-1}\right)},\\
x_j= \phi_j (t):= tx_k,\\
x_k= \phi_k (t):=\left(\sqrt{t^2-a_i^{-1}t+1}\right)^{-1}x_i
\end{array}
\end{equation}
defined for $t\in (0,m_i)$ and $t\in (M_i,+\infty)$ respectively,
where $\omega=\left(a_i^{-1}+a_j^{-1}+a_k^{-1}\right)^{-1}$ and $\{i,j,k\}=\{1,2,3\}$.
In addition,
\begin{equation}\label{parcurlims}
\begin{array}{l}
\lim\limits_{t\to 0+} x_i=\lim\limits_{t\to 0+} x_k = +\infty, \; \lim\limits_{t\to 0+} x_j =0,
\;
\lim\limits_{t\to m_i-} x_j= \lim\limits_{t\to m_i-} x_k = +\infty, \; \lim\limits_{t\to m_i-} x_i=0,
\\
 \lim\limits_{t\to M_i+} x_j= \lim\limits_{t\to M_i+} x_k = +\infty, \; \lim\limits_{t\to M_i+} x_i=0,
\;
\lim\limits_{t\to +\infty} x_i=\lim\limits_{t\to +\infty} x_j = +\infty, \; \lim\limits_{t\to +\infty} x_k =0.
\end{array}
\end{equation}
\end{lemma}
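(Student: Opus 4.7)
The strategy is to exploit the homogeneity of $\lambda_i$ by introducing $t := x_j/x_k$ as parameter along each ray, solving $\lambda_i = 0$ for $x_i$ up to a common positive scale $x_k$, and then fixing that scale using the volume constraint $\operatorname{Vol} = 1$. First I would substitute $x_j = t x_k$ into $\lambda_i = a_i(x_i^2 - x_j^2 - x_k^2) + x_j x_k = 0$ to obtain
\[
x_i^2 = x_k^2 \bigl(t^2 - a_i^{-1} t + 1\bigr),
\]
which is solvable for a positive $x_i$ precisely when the right-hand side is positive, i.e., when $t$ lies outside $[m_i, M_i]$. This yields exactly the two open intervals $(0, m_i)$ and $(M_i, +\infty)$ of admissible parameter values, corresponding to the two connected components of $\Lambda_i$ already identified in Lemma~\ref{Delta_Conic_gen}. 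Setting $s(t) := \sqrt{t^2 - a_i^{-1} t + 1}$, the cone $\Lambda_i$ is thus parametrized by $x_i = s(t)\, x_k$, $x_j = t x_k$ with $x_k > 0$ free.

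Next I would impose $\operatorname{Vol} = x_i^{1/a_i} x_j^{1/a_j} x_k^{1/a_k} = 1$, which rearranges to $s(t)^{1/a_i} t^{1/a_j} x_k^{1/\omega} = 1$ since $\omega^{-1} = a_1^{-1} + a_2^{-1} + a_3^{-1}$. Solving uniquely for $x_k$ gives $\phi_k(t) = s(t)^{-\omega/a_i}\, t^{-\omega/a_j}$; back-substitution together with the algebraic identity $1 - \omega/a_i = \omega(a_j^{-1} + a_k^{-1})$ then recovers exactly the formulas for $\phi_i$ and $\phi_j = t \phi_k$ displayed in \eqref{parcurvegen}. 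Smoothness of each of $r_{ij}$ and $r_{ik}$ follows because $s(t)$ and $t$ remain strictly positive on each open $t$-interval, making $\phi_i, \phi_j, \phi_k$ real-analytic; the resulting two disjoint smooth curves exhaust $\Sigma \cap \Lambda_i$ by construction and therefore coincide with $r_{ij}$ and $r_{ik}$.

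Finally, the limits \eqref{parcurlims} would be read off by asymptotic analysis at the four endpoints. At $t \to 0^+$, $s \to 1$ forces $x_k \sim t^{-\omega/a_j} \to +\infty$ and $x_i \sim x_k \to +\infty$, whereas $x_j = t x_k \sim t^{\,\omega(a_i^{-1} + a_k^{-1})} \to 0$. At $t \to m_i^-$ and $t \to M_i^+$, the vanishing of $s$ sends $x_i = t^{-\omega/a_j} s^{\omega(a_j^{-1} + a_k^{-1})} \to 0$, while the negative power $s^{-\omega/a_i}$ drives $x_k$ and hence $x_j = t x_k$ to $+\infty$. At $t \to +\infty$, using $s \sim t$ one finds $x_k \sim t^{-\omega(a_i^{-1} + a_j^{-1})} = t^{\omega/a_k - 1} \to 0$, $x_i \sim t^{\omega/a_k} \to +\infty$ and $x_j \to +\infty$. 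The only mildly delicate step, which is otherwise purely mechanical, is tracking the signs of exponents such as $1 - \omega/a_j = \omega(a_i^{-1} + a_k^{-1})$; these are positive thanks to $\omega < a_j$, itself a consequence of $\omega^{-1} > a_j^{-1}$, which in turn guarantees the claimed direction of each limit.
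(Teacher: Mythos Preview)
Your proposal is correct and follows essentially the same approach as the paper: introduce the ratio $t=x_j/x_k$, solve $\lambda_i=0$ for $x_i$ in terms of $x_k$ via $s(t)=\sqrt{t^2-a_i^{-1}t+1}$, then determine $x_k$ uniquely from $\operatorname{Vol}=1$, and finally read off the limits by elementary asymptotics using $1-\omega/a_j=\omega(a_i^{-1}+a_k^{-1})>0$. The only cosmetic difference is that you solve for $\phi_k$ first and back-substitute, whereas the paper solves for $\phi_i$ first; the computations and the asymptotic analysis at the four endpoints are otherwise identical.
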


\begin{proof}
Substituting  $x_j=tx_k$ into the equation $a_i(x_i^2-x_j^2-x_k^2)+x_jx_k=0$
of~$\Lambda_i$ and solving the quadratic equation
$
\left(t^2-a_i^{-1}t+1\right)x_k^2=x_i^2
$
with respect to $x_k>0$, we obtain the third expression in~\eqref{parcurvegen}.
Then the expression for $x_i$ is obtained  using
$\operatorname{Vol}=x_i^{1/a_i}x_j^{1/a_j}x_k^{1/a_k}=1$.

As it follows from Lemma~\ref{Delta_Conic_gen}, the curve  $r_i=\Sigma \cap \Lambda_i$
consists of two components $r_{ij}=\Sigma \cap \Lambda_{ij}$ and  $r_{ik}=\Sigma \cap \Lambda_{ik}$. This fact  is also clear from
$t^2-a_i^{-1}t+1=\left(t-m_i\right)\left(t-M_i\right)>0$,
$t\in (0,m_i)\cup (M_i,+\infty)$,
where $m_i=m(a_i)$ and $M_i=M(a_i)$ are functions given in~\eqref{m_and_M}.
The components $r_{ij}$ and $r_{ik}$ are connected curves
being continuous images of the connected sets $(0,m_i)$ and  $(M_i, +\infty)$.
Moreover since $x_i=\phi_i(t)$, $x_j=\phi_j(t)$ and $x_k=\phi_k(t)$
are differentiable functions of $t\in (0,m_i) \cup (M_i,+\infty)$,
the components  $r_{ij}$ and $r_{ik}$ are smooth curves.
Recall that analogous parametric equations
can be obtained for the curves
$r_j=\Sigma \cap \Lambda_j$ and $r_k=\Sigma \cap \Lambda_k$
using permutations of the indices  $i,j$ and $k$ in~\eqref{parcurvegen}.

Let us study behavior of the curves $r_{ij}$ and $r_{ik}$
in neighborhoods of the boundaries of the intervals $(0,m_i)$ and  $(M_i, +\infty)$.
Clearly $x_i(t)\to +\infty$ as~$t\to 0+$  and $x_i(t)\to 0$
as  $t\to m_i-$ or $t\to M_i+$.
Analogously
$
x_i=\phi_i(t)\sim t^{-\omega a_j^{-1}} t^{\omega\big(a_j^{-1}+a_k^{-1}\big)}
=t^{\omega a_k^{-1}} \to +\infty
$
as $t\to +\infty$.
Since $1-\omega\big(a_j^{-1}+a_k^{-1}\big)>0$,
 we
easily can find the limits in~\eqref{parcurlims} for
$
x_j=\phi_j(t) = t^{1-\omega a_j^{-1}}
\left(\sqrt{\left(t-m_i\right)\left(t-M_i\right)}\right)^{-1+\omega\big(a_j^{-1}+a_k^{-1}\big)}
$
and
$x_k=\phi_k(t)= t^{-\omega a_j^{-1}}
\left(\sqrt{\left(t-m_i\right)\left(t-M_i\right)}\right)^{-1+\omega\big(a_j^{-1}+a_k^{-1}\big)}
$.
Lemma~\ref{param_r_i} is proved.
\end{proof}

\begin{figure}[h]
\centering
\includegraphics[width=0.9\textwidth]{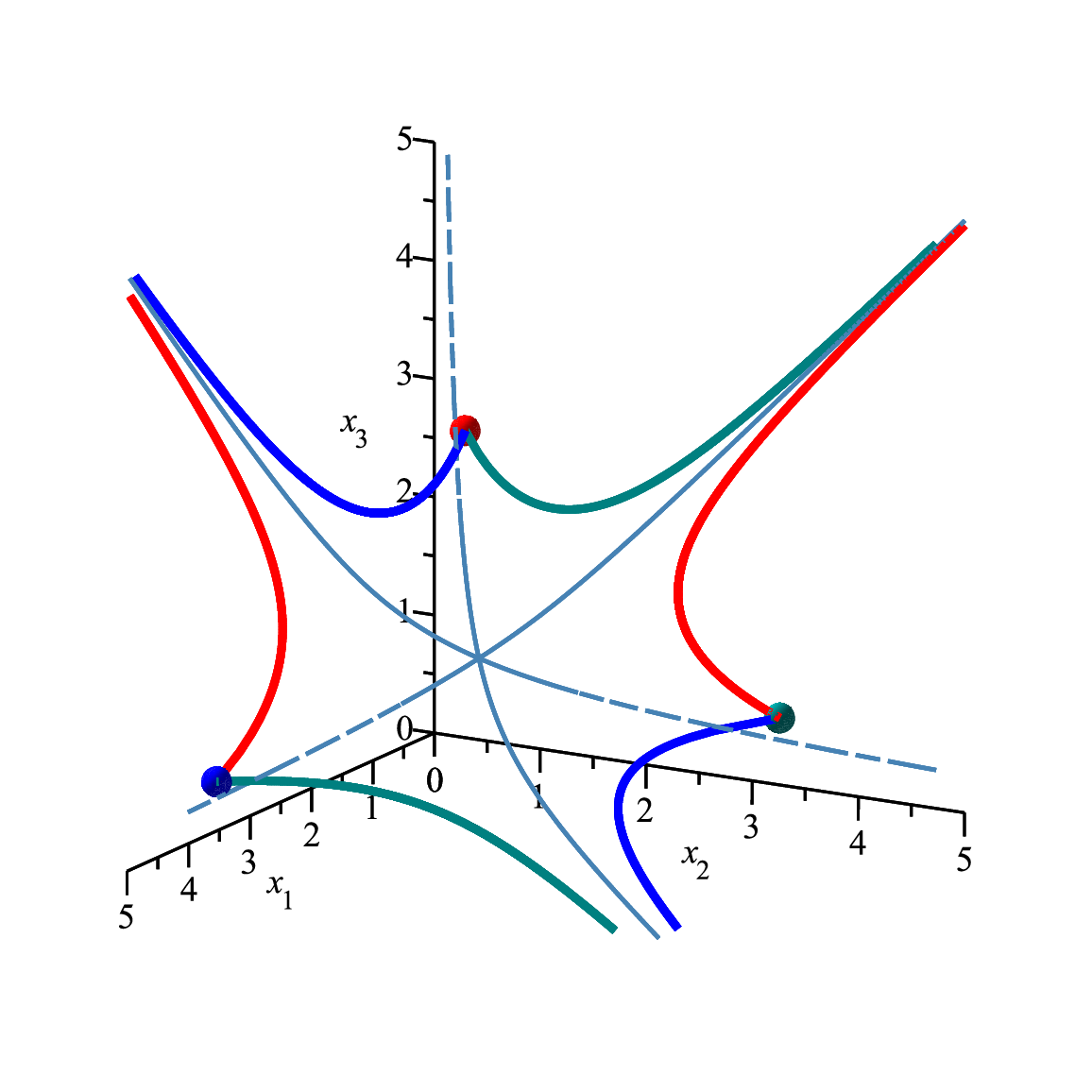}
\caption{The curves $r_1, r_2, r_3$
(in blue, teal, red colors) after removing extra pieces, the curves $I_1, I_2, I_3$ (steel blue dash lines) and the points $P_{12}, P_{13}, P_{23}$ (in red, teal, blue colors) at $a_1=5/26$, $a_2=4/26$, $a_3=3/26$}
\label{pic1}
\end{figure}

\begin{lemma}\label{r_and_I}
For each pair of the curves $r_i$ and $r_j$
corresponding to an arbitrary generalized Wallach space with $a_1,a_2,a_3\in (0,1/2)$
the curve $I_k$ defined by the equations
\begin{equation}\label{Eq_I_k}
x_i=x_j=\tau>0, \qquad x_k=\tau^{-a_k\left(a_i^{-1}+ a_j^{-1}\right)},
\qquad \{i,j,k\}=\{1,2,3\},
\end{equation}
does not intersect their  components $r_{ik}$ and $r_{jk}$,
but  approximates them as accurately as desired at infinity,
at the same time $I_k$ intersects both  $r_i$ and $r_j$ on their components
 $r_{ij}$ and $r_{ji}$. In addition,~$I_k$ can not intersect the curve $r_k$,
 moreover, $r_k$ moves away from $I_k$ in both directions
as $\tau \to 0+$ and $\tau \to +\infty$.
See Fig.~\ref{pic1} for illustrations.
\end{lemma}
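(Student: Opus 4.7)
I would verify the three assertions by direct substitution into the defining equations $\lambda_\ell=0$ of the cones $\Lambda_\ell$, combined with the parametric description from Lemma~\ref{param_r_i}. To locate $I_k\cap r_i$, substitute $x_i=x_j=\tau$ into $\lambda_i=a_i(x_i^2-x_j^2-x_k^2)+x_jx_k$: the $x_i^2$ and $x_j^2$ terms cancel and we get $x_k(\tau-a_ix_k)=0$, hence $\tau=a_ix_k$. Coupled with $\operatorname{Vol}=1$ this gives a unique point on $\Sigma$. In the parametrization~\eqref{parcurvegen} we have $t=x_j/x_k=a_i$; by~\eqref{tlar}, $a_i<m_i$, so $t$ lies in the interval $(0,m_i)$ which, as identified in the proof of Lemma~\ref{param_r_i}, corresponds to the component $r_{ij}$. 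An identical computation with $i$ and $j$ swapped produces the unique intersection $I_k\cap r_j$ lying on $r_{ji}$. Since $r_i=r_{ij}\cup r_{ik}$ and the only intersection with $\Lambda_i$ sits on $r_{ij}$, this already implies $I_k\cap r_{ik}=\emptyset$, and by symmetry $I_k\cap r_{jk}=\emptyset$.

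For the asymptotic approach at infinity, I would read off from~\eqref{parcurvegen} that on $r_{ik}$ (parameter range $t\in(M_i,+\infty)$) the ratio $x_i/x_j=\sqrt{t^2-a_i^{-1}t+1}/t=\sqrt{1-a_i^{-1}t^{-1}+t^{-2}}$ tends to $1$ as $t\to+\infty$, while simultaneously $x_k\to 0$ and $x_i,x_j\to+\infty$ by~\eqref{parcurlims}; this matches the behaviour of $I_k$ as $\tau\to+\infty$. Matching a point on $r_{ik}$ to the point of $I_k$ with the same value of $x_k$, the $O(t^{-1})$ decay of the expression above gives an explicit upper bound on the Euclidean distance which tends to zero. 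The component $r_{jk}$ is handled by the permutation $i\leftrightarrow j$.

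For the last part, substitute $x_i=x_j=\tau$ into $\lambda_k=a_k(x_k^2-x_i^2-x_j^2)+x_ix_j$ to obtain $a_kx_k^2=(2a_k-1)\tau^2$; since $a_k\in(0,1/2)$ the right-hand side is negative whereas the left is non-negative, contradicting $x_k>0$. Hence $I_k\cap\Lambda_k=\emptyset$ and in particular $I_k\cap r_k=\emptyset$. To exhibit that $r_k$ actually recedes from $I_k$, use the analogue of~\eqref{parcurvegen} for $r_k$: the parameter $s:=x_i/x_j$ must lie in $(0,m_k)\cup(M_k,+\infty)$ for $\lambda_k=0$ to admit a positive $x_k$, whereas on $I_k$ one has $s\equiv 1$. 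Since $m_k<1<M_k$, the ratio on $r_k$ is bounded away from $1$; combined with the limits~\eqref{parcurlims} (with indices permuted so as to parametrize $r_k$) and the limits of $I_k$ ($x_i=x_j\to 0$, $x_k\to+\infty$ as $\tau\to 0+$, and $x_i=x_j\to+\infty$, $x_k\to 0$ as $\tau\to+\infty$), the coordinates of corresponding points diverge in both limit directions.

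The main obstacle will be making the phrase \emph{approximates as accurately as desired} (and dually \emph{moves away}) quantitatively precise: one has to fix a matching between points of $I_k$ and points of $r_{ik}$ (matching $x_k$-values is convenient) and convert the ratio estimate $x_i/x_j-1=O(t^{-1})$ into an honest bound on the Euclidean distance in $\mathbb{R}^3$, and likewise produce a lower bound showing that the distance from $r_k$ to either end of $I_k$ stays bounded below. Once the matching is fixed, both estimates follow from the explicit power asymptotics read off from~\eqref{parcurvegen}; all the remaining steps are algebraic consequences of~\eqref{tlar} and of the sign analysis above.
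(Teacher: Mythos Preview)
Your argument for the intersections is essentially the paper's: substitute $x_i=x_j=\tau$ into $\lambda_i$, obtain $x_k(\tau-a_ix_k)=0$, read off the parameter value $t=x_j/x_k=a_i\in(0,m_i)$ and conclude the intersection lies on $r_{ij}$; the treatment of $\lambda_k$ (with the sign obstruction $a_kx_k^2=(2a_k-1)\tau^2<0$) is likewise the same. So for the existence/uniqueness and location of the intersection points the two proofs coincide.

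Where you diverge from the paper is in the asymptotic statements, and here the paper takes a much lighter route that dissolves what you flag as ``the main obstacle''. Rather than matching points and estimating Euclidean distances, the paper simply evaluates the defining functions $\lambda_i$ and $\lambda_k$ \emph{along} $I_k$, obtaining closed expressions
\[
\lambda_i(\tau)=\frac{1-a_i\tau^{-1-a_k(a_i^{-1}+a_j^{-1})}}{2\tau},\qquad
\lambda_k(\tau)=\frac{a_k\tau^{-2a_k(a_i^{-1}+a_j^{-1})}+(1-2a_k)\tau^2}{2\tau^{2-a_k(a_i^{-1}+a_j^{-1})}},
\]
and then reads off $\lambda_i(\tau)\to 0$ as $\tau\to+\infty$ (so $I_k$ approaches the locus $\lambda_i=0$, i.e.\ $r_{ik}$, at infinity) and $\lambda_k(\tau)\to+\infty$ as $\tau\to 0+$ or $\tau\to+\infty$ (so $I_k$ and $r_k$ separate). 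In other words, the paper interprets ``approximates'' and ``moves away'' via the values of the defining functions $\lambda_\ell$ rather than via metric distance on $\Sigma$. Your programme of matching $x_k$-values and bounding $|x_i-\tau|,|x_j-\tau|$ also works (one needs the exponent estimate $\omega a_k^{-1}<1$ to turn the ratio bound $x_i/x_j-1=O(t^{-1})$ into a decaying difference, since $x_i,x_j\to\infty$), and it yields a stronger, genuinely geometric conclusion; but it is considerably more laborious than the paper's two-line limit computation.
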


\begin{proof} $I_k$ is exactly the curve that belongs to the surface~$\Sigma$ and have a projection $x_i=x_j$ on the coordinate plane $x_k=0$. Substituting $x_i=x_j=\tau$ into the equation
$x_i^{1/a_i}x_j^{1/a_j}x_k^{1/a_k}=1$ of~$\Sigma$  we find $x_k$ given in~\eqref{Eq_I_k}.
Substituting the expressions~\eqref{Eq_I_k} into the functions
$\lambda_i=a_i\left(x_i^2-x_j^2-x_k^2\right)+x_jx_k$
and
$\lambda_j=a_j\left(x_j^2-x_i^2-x_k^2\right)+x_ix_k$
(recall that the cones $\Lambda_i$ and $\Lambda_j$ are defined by $\lambda_i=0$ and $\lambda_j=0$
respectively) we obtain
$$
\lambda_i=\lambda_i(\tau)=\frac{1-a_i\tau^{-1-a_k\left(a_i^{-1}+ a_j^{-1}\right)}}{2\tau}, \qquad \lambda_j=\lambda_j(\tau)=\frac{1-a_j\tau^{-1-a_k\left(a_i^{-1}+ a_j^{-1}\right)}}{2\tau}.
$$
In what follows that~$I_k$ intersects the curve~$r_i=\Sigma\cap \Lambda_i$ at the unique point with coordinates
$x_i^0=x_j^0=\tau_0$, $x_k^0=\tau_0^{-a_k\left(a_i^{-1}+ a_j^{-1}\right)}$
where
$\tau_0=a_i^{\left(1+a_k\left(a_i^{-1}+a_j^{-1}\right)\right)^{-1}}$
is the unique root
of the equation~$\lambda_i(\tau)=0$.
Using Lemma~\ref{param_r_i}, write out the equation
$\tau_0=x_j^0=tx_k^0=t\tau_0^{-a_k\left(a_i^{-1}+ a_j^{-1}\right)}$
admitting the unique root
$t=t_k:=\tau_0^{1+a_k\left(a_i^{-1}+a_j^{-1}\right)}=a_i$,
at which the mentioned point $I_k\cap r_i$  belongs to~$r_i$.
The inequalities $0<t_k=a_i<m_i$ (see~\eqref{tlar}) imply
that $I_k$ must intersect $r_i$ on its  component~$r_{ij}$.
Since such an intersection with~$r_i$  happens only once,
we conclude that $I_k$  can not intersect the component~$r_{ik}$.
Moreover, the value of the limit
$
\lim_{\tau \to +\infty}\lambda_i=\frac{1}{2}\lim_{\tau \to +\infty}
\left(\tau^{-1}-a_i\tau^{-2-a_k\left(a_i^{-1}+ a_j^{-1}\right)}\right)=0
$
shows that $r_{ik}$ can be approximated by~$I_k$ at infinity as accurately as desired.

From the analysis of the function $\lambda_j=\lambda_j(\tau)$, similar conclusions will follow regarding the components~$r_{ji}$ and~$r_{jk}$ of the curve~$r_j$.
For the curve  $r_k$ we have
$
\lambda_k(\tau)=\dfrac{a_k\tau^{-2a_k\left(a_i^{-1}+ a_j^{-1}\right)}
+(1-2a_k)\tau^2}
{2\tau^{2-a_k\left(a_i^{-1}+ a_j^{-1}\right)}}
$.
The equation $\lambda_k(\tau)=0$ can not admit any solution
due to positiveness of the numerator of the  latter fraction.
Therefore $I_k$ never cross $r_k$.
Moreover, they move away from each other as much as desired in both directions:
\begin{eqnarray*}
\lim_{\tau \to +\infty} \lambda_k(\tau)=
\frac{a_k}{2}\lim_{\tau \to +\infty} \tau^{-2-a_k\left(a_i^{-1}+ a_j^{-1}\right)}+
\frac{1-2a_k}{2}\lim_{\tau \to +\infty} \tau^{a_k\left(a_i^{-1}+ a_j^{-1}\right)}=0+\infty=+\infty,\\
\lim_{\tau \to 0+} \lambda_k(\tau)=
\frac{a_k}{2}\lim_{\tau \to 0+} \tau^{-2-a_k\left(a_i^{-1}+ a_j^{-1}\right)}+
\frac{1-2a_k}{2}\lim_{\tau \to 0+} \tau^{a_k\left(a_i^{-1}+ a_j^{-1}\right)}=+\infty+0=+\infty.
\end{eqnarray*}
Lemma~\ref{r_and_I} is proved.
\end{proof}

\begin{lemma}\label{pairwise_inter_r}
For any generalized Wallach space with $a_1,a_2,a_3\in (0,1/2)$ the following assertions hold:
\begin{enumerate}
\item
For each pair of the curves~$r_i$  and~$r_j$ (respectively $r_k$)
their components $r_{ij}$ and $r_{ji}$ (respectively $r_{ik}$ and $r_{ki}$)
admits  exactly one common point~$P_{ij}$ (respectively $P_{ik}$)
with coordinates $x_i=\alpha p^0$, $x_j=\beta p^0$,  $x_k=p^0$
(respectively $x_i=\gamma q^0$,  $x_k=\delta q^0$, $x_j= q^0$),
where
\begin{equation}\label{Pij}
\begin{array}{l}
p^0:=\alpha^{-\omega a_i^{-1}}\beta^{-\omega a_j^{-1}}, \quad
\alpha:=\Phi(a_i,a_j),  \quad \beta:=\Psi(a_i,a_j), \\
q^0:=\gamma^{-\omega a_i^{-1}}\delta^{-\omega a_k^{-1}}, \quad
\gamma:=\Phi(a_i,a_k), \quad \delta:=\Psi(a_i,a_k)
\end{array}
\end{equation}
with the functions $\Phi$ and $\Psi$ defined in~\eqref{alpha_beta},
$\{i,j,k\}=\{1,2,3\}$,
whereas their other components $r_{ik}$ and $r_{jk}$
(respectively $r_{ij}$ and $r_{kj}$)
are disjoint approximating each other at infinity as close as desired
(see Fig.~\ref{pic1}).

\item
The values  $t_{ij}$ and $t_{ik}$ of the parameter $t$,
which correspond to the points  $P_{ij}$
and $P_{ik}$ at the parametrization~\eqref{parcurvegen},
can be found by the formula
\begin{equation}\label{t_ij_a}
t_{ij}=\beta, \qquad t_{ik}=\delta^{-1}.
\end{equation}
In addition $a_i\le t_{ij}<m(a_i)$ and  $M(a_i)<t_{ik}\le a_i^{-1}$
for all $a_i, a_j, a_k\in (0,1/2)$.
\end{enumerate}
\end{lemma}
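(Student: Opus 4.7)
The proof naturally splits along the two parts of the lemma, and both rest on the cone geometry of Lemma~\ref{Delta_Conic_gen} together with the parametrization of Lemma~\ref{param_r_i}.

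For part (1), I plan to use that, by Lemma~\ref{Delta_Conic_gen}, $\Lambda_i \cap \Lambda_j$ is the union of exactly two straight lines: $\{x_i = x_j,\, x_k = 0\}$ and the line \eqref{commonline}. The first lies on the boundary of $(0,+\infty)^3$ and so is disjoint from $\Sigma$. Substituting $(x_i,x_j,x_k) = (up, vp, p)$ from \eqref{commonline} into the equation $\operatorname{Vol}=1$ of $\Sigma$ reduces to $u^{1/a_i}v^{1/a_j}p^{1/a_i+1/a_j+1/a_k} = 1$, a single equation in $p > 0$ with the unique solution $p^0 = u^{-\omega/a_i}v^{-\omega/a_j}$; with $(u,v) = (\alpha,\beta) = (\Phi(a_i,a_j), \Psi(a_i,a_j))$ this is exactly the point \eqref{Pij}. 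By the labelling convention preceding the lemma, \eqref{commonline} is contained precisely in the pair $\Lambda_{ij}, \Lambda_{ji}$, so this common point lies in $r_{ij} \cap r_{ji}$; the same computation with $j$ replaced by $k$ produces $P_{ik}$. For the disjointness of $r_{ik}$ and $r_{jk}$, I would note that $\Lambda_{ik} \cap \Lambda_{jk}$ is by definition the coordinate line $\{x_i = x_j,\, x_k = 0\}$, again disjoint from $\Sigma$. The asymptotic approximation of these two components at infinity is then read off the limits \eqref{parcurlims}: on both curves $x_k \to 0$ while $x_i, x_j \to +\infty$ with $x_i/x_j \to 1$, which is exactly the claimed approach to the common boundary line.

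For part (2), I would read $t_{ij}$ and $t_{ik}$ straight off the parametrization \eqref{parcurvegen}, using that along $r_i$ one has $x_j = t\, x_k$. The coordinates from \eqref{Pij} give $t_{ij} = \beta = \Psi(a_i,a_j)$ and $t_{ik} = 1/\delta = \Psi(a_i,a_k)^{-1}$, which is \eqref{t_ij_a}. For the inequalities the key step is to simplify $\Psi$ by rationalization. Using
\[
a_j^2 - a_i^2 = \tfrac{1}{4}\bigl(\sqrt{1-4a_i^2} - \sqrt{1-4a_j^2}\bigr)\bigl(\sqrt{1-4a_i^2} + \sqrt{1-4a_j^2}\bigr),
\]
and
\[
4a_j^2 - 1 + \sqrt{\Delta_{ij}} = \sqrt{1-4a_j^2}\bigl(\sqrt{1-4a_i^2} - \sqrt{1-4a_j^2}\bigr),
\]
the factor $\sqrt{1-4a_i^2} - \sqrt{1-4a_j^2}$ cancels and one obtains the closed form
\[
\Psi(a_i,a_j) = \frac{2 a_i \sqrt{1-4a_j^2}}{\sqrt{1-4a_i^2} + \sqrt{1-4a_j^2}}.
\]
From this, $\Psi(a_i,a_j)/a_i$ is manifestly strictly decreasing in $a_j \in (0,1/2)$, attains $m(a_i)/a_i = 2/(1+\sqrt{1-4a_i^2})$ as $a_j \to 0^+$, equals $1$ at $a_j = a_i$ (which recovers Lemma~\ref{alpa_beta_ineq}), and tends to $0$ as $a_j \to 1/2^-$. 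Hence $t_{ij} < m(a_i)$ for every $a_j \in (0,1/2)$, and the lower bound $a_i \le t_{ij}$ is supplied by Lemma~\ref{alpa_beta_ineq}; the bounds on $t_{ik}$ are obtained symmetrically with $a_k$ in place of $a_j$ and $M(a_i) = 1/m(a_i)$ in place of $m(a_i)$.

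The main obstacle I expect is the component-bookkeeping in part~(1): one has to verify that the two lines of $\Lambda_i \cap \Lambda_j$ are assigned to the correct pairs of components, so that \eqref{commonline} is identified with $\Lambda_{ij} \cap \Lambda_{ji}$ and $\{x_i=x_j,\, x_k=0\}$ with $\Lambda_{ik} \cap \Lambda_{jk}$ (and similarly for $i,k$). Once this labelling is secured, the existence and uniqueness of $P_{ij}, P_{ik}$ reduces to a one-variable equation in $p$, and the remaining statements (disjointness, asymptotic approximation, explicit values $t_{ij}, t_{ik}$, and their bounds) follow by the routine computations sketched above.
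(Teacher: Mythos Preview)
Your argument for part~(1) matches the paper's: both reduce $r_i\cap r_j=(\Lambda_i\cap\Lambda_j)\cap\Sigma$ to solving $\operatorname{Vol}=1$ along the line~\eqref{commonline}, and identify the disjoint pair via the boundary line $\{x_i=x_j,\,x_k=0\}$. The only cosmetic difference is in the asymptotic approximation of $r_{ik}$ and $r_{jk}$: the paper routes this through Lemma~\ref{r_and_I} (both curves approximate the intermediate curve~$I_k$), while you read it directly from the limits~\eqref{parcurlims}.

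For part~(2) you take a genuinely different route. The paper proves $t_{ij}<m(a_i)$ by assuming $a_i>a_j$ ``without loss of generality'' and carrying out a chain of squarings that terminates at $(a_i-a_j)^2(a_i+a_j)^2>0$. Your rationalization produces the closed form
\[
\Psi(a_i,a_j)=\frac{2a_i\sqrt{1-4a_j^2}}{\sqrt{1-4a_i^2}+\sqrt{1-4a_j^2}},
\]
from which strict monotonicity in $a_j$ yields $t_{ij}<m(a_i)$ for \emph{every} $a_j\in(0,1/2)$ at once, with no case split. This is shorter and more transparent than the paper's computation. One point worth flagging: the same formula shows $\Psi(a_i,a_j)<a_i$ whenever $a_j>a_i$, so the lower bound $a_i\le t_{ij}$ that you defer to Lemma~\ref{alpa_beta_ineq} in fact requires $a_j\le a_i$; the paper's ``without loss of generality'' carries the same implicit restriction, and the subsequent arguments in the paper use only $0<t_{ij}<m(a_i)$ in any case.
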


\begin{proof} $(1)$
By definition  $r_i=\Lambda_i \cap \Sigma$ and $r_j=\Lambda_j \cap \Sigma$.
It follows then  $r_i\cap r_j=(\Lambda_i\cap \Lambda_j)\cap \Sigma$, and hence,
the problem of finding  $r_i\cap r_j$
reduces to searching for possible common points
of the straight line $\Lambda_i\cap \Lambda_j$ with the surface $\Sigma$.
As it was proved in Lemma~\ref{Delta_Conic_gen},
the intersection $\Lambda_i\cap \Lambda_j$ equal to $\Lambda_{ij}\cap \Lambda_{ji}$ is the straight line
 $x_i=\alpha p$, $x_j=\beta p$,  $x_k=p$.
Substituting these expressions into the equation
$x_i^{1/a_i}x_j^{1/a_j}x_k^{1/a_k}=1$ of~$\Sigma$ we obtain
the following equation regarding to the parameter~$p$:
$$
\alpha^{1/a_i} \beta^{1/a_j} p^{1/a_i+1/a_j+1/a_k}=1.
$$
This yields coordinates
$x_i=\alpha p^0$, $x_j=\beta p^0$,  $x_k=p^0$ of an unique point
which belong to $r_{ij}\cap r_{ji}=(\Lambda_{ij}\cap \Lambda_{ji})\cap \Sigma$
for  $\alpha, \beta$ and~$p^0$ exposed in~\eqref{Pij}. Let us denote that point by~$P_{ij}$.
Thus  $\{P_{ij}\}=r_{ij}\cap r_{ji}=r_i\cap r_j$.
Repeating the same reasoning for the curves $r_i$ and $r_k$,  we find coordinates
$x_i=\gamma q^0$,  $x_k=\delta q^0$, $x_j= q^0$
of the unique point $P_{ik}\in r_{ik}\cap r_{ki}=r_i\cap r_k$.

In order to highlight disjoint components of the curves $r_i$ and $r_j$,
recall another  common line $x_i=x_j$, $x_k=0$
of the cones $\Lambda_i$  and $\Lambda_j$
(more precisely, the common line of their components $\Lambda_{ik}$ and $\Lambda_{jk}$),
mentioned in Lemma~\ref{Delta_Conic_gen}.
The  components $r_{ik}$ and $r_{jk}$ are situated on opposite sides of~$I_k$, therefore
they can not admit a common point. Moreover, $r_{ik}$ and $r_{jk}$ approximate
the same curve~$I_k$ at infinity by Lemma~\ref{r_and_I}, therefore at infinity they
approximate each other.

\smallskip

$(2)$ Assume that $r_i$  is parameterized as in Lemma~\ref{param_r_i}.
Let  $t=t_{ij}$ be a value of $t$ which corresponds to the point $P_{ij}$
on  $r_{ij}$.
The coordinates of $P_{ij}$ are $x_i=\alpha p^0$, $x_j=\beta p^0$, $x_k=p^0$
as we established.
Then the equalities  $\phi_j(t)=\beta p^0$,  $\phi_k(t)=p^0$ and
$\phi_j(t)=t\phi_k(t)$ in~\eqref{parcurvegen} easily imply
$t_{ij}=\beta:=\Psi(a_i,a_j)$.

Now we claim that   $t_{ij}\in [a_i, m_i)$ for all $a_i, a_j\in (0,1/2)$.

Indeed if $a_i=a_j$ for  $i\ne j$ then $t_{ij}=a_i$ obviously.

If $a_i\ne a_j$ then assume  $a_i>a_j$ without loss of generality.
Then   Lemma~\ref{alpa_beta_ineq} implies $t_{ij}>a_i$.
The inequality  $t_{ij}<m_i$ is equivalent to
\begin{multline*}
\left(1-4a_j^2\right)a_i^2-a_i^2\sqrt{\Delta_{ij}}<\left(a_i^2-a_j^2\right)
\left(1-\sqrt{1-4a_i^2}\right) \\ \Leftrightarrow~ a_i^2-a_j^2<a_i^2\sqrt{1-4a_j^2}-a_j^2\sqrt{1-4a_i^2}
~\Leftrightarrow~
\left(a_i^2-a_j^2\right)^2<\left(a_i^2\sqrt{1-4a_j^2}-a_j^2\sqrt{1-4a_i^2}\right)^2\\
~\Leftrightarrow~ \sqrt{\Delta_{ij}}<1-2\left(a_i^2+a_j^2\right)
~\Leftrightarrow~
(a_i-a_j)^2(a_i+a_j)^2>0.
\end{multline*}

Thus $t_{ij}\in (a_i, m_i)$ for $a_i\ne a_j$.
Coordinates $x_i=\gamma q^0$,  $x_k=\delta q^0$, $x_j= q^0$ of the point $P_{ik}$
can be clarified by the same way.
The value
$
t_{ik}=\delta^{-1}=\frac{1}{\Psi(a_i,a_k)}
$
is obtained as an unique root of the equation  $q^0=\phi_j(t)=t\phi_k(t)=t\delta q^0$ (see~\eqref{alpha_beta}).

Analogously  $t_{ik}\in \left(M_i,  a_i^{-1}\right]$.
Indeed if $a_i=a_k$ then $t_{ik}=\Psi(a_i,a_k)^{-1}=a_i^{-1}>M_i$
due to~\eqref{tlar}.

If $a_i\ne a_k$ then assume $a_i>a_k$.
The inequality  $t_{ik}<a_i^{-1}$ is clear from Lemma~\ref{alpa_beta_ineq}.
The inequality $t_{ik}>M_i$
can be established in similar way as above.
Lemma~\ref{pairwise_inter_r} is proved.
\end{proof}

\begin{remark}\label{sec_tails}
It is advisable to remove extra pieces of the curves $r_1$, $r_2$ and $r_3$,
retaining after their pairwise intersections.
In fact those extra pieces do not belong to the boundary of the domain $R$.
Let us consider the example of the curve $r_1$ which
intersects~$r_2$ on the unique point
$P_{12}(\alpha p^0, \beta p^0, p^0)$ known from Lemma~\ref{pairwise_inter_r}, where   $\alpha, \beta$ and $p^0$
are defined by formulas~\eqref{alpha_beta} and~\eqref{Pij} at $i=1, j=2$.
An analysis of \eqref{parcurvegen} and \eqref{parcurlims} shows that
the interval   $[t_{12}, m_1)$ corresponds to the extra piece
of $r_{12}$ behind the point $P_{12}$,
where $t_{12}$ can be found by~\eqref{t_ij_a}.
Therefore $(0,t_{12}]$ should be taken as an interval of parametrization
for the component $r_{12}$
instead of the original interval $(0,m_1)$.
Similarly, the interval $(M_1,+\infty)$  corresponding to~$r_{13}$
 before its intersection with $r_{31}$ at the point
$P_{13}(\gamma q^0, q^0, \delta q^0)$, will be reduced to $[t_{13}, +\infty)$,
where  $\gamma, \delta$ and $q^0$ are defined by formulas~\eqref{alpha_beta}
and~\eqref{Pij} at $i=1, j=3$.
\end{remark}

\subsection{Normals to cones
and the vector field associated with the differential system}

To study the behavior of trajectories of the system~\eqref{three_equat}
 we need the sign of the standard inner product
$$
({\bold V}, \nabla \lambda_i)=f_i\,\frac{\partial \lambda_i}{\partial x_i}+f_j\,\frac{\partial \lambda_i}{\partial x_j}+f_k\,\frac{\partial \lambda_i}{\partial x_k}
$$
on an arbitrarily chosen  curve~$r_i$,
where  $\nabla \lambda_i$ is the normal  to the surface~$\Lambda_i$ evaluated
along~$r_i$ and
${\bold V}=(f_1,f_2,f_3)$ is the vector field associated with the differential system~\eqref{three_equat}.

\begin{lemma}\label{normal_inside_gen}
For any generalized Wallach space with $a_1,a_2,a_3\in (0,1/2)$
for every $i=1,2,3$ the normal $\nabla \lambda_i$ of the surface~$\Lambda_i$
is directed inside $R$
at every point of the  curve $r_i$.
\end{lemma}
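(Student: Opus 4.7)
The plan is to differentiate $\lambda_i$ explicitly, verify that the gradient is nowhere zero on $r_i$, and then close the argument with a first-order Taylor estimate. Direct computation gives
\[
\nabla\lambda_i=\bigl(2a_ix_i,\; x_k-2a_ix_j,\; x_j-2a_ix_k\bigr),
\]
whose first component is strictly positive on $(0,+\infty)^3$, so $\nabla\lambda_i\neq 0$ on all of $r_i$ and $\Lambda_i$ is a genuine smooth surface there. This in particular says that $\nabla\lambda_i$ is a well-defined non-zero normal vector to $\Lambda_i$ along $r_i$.

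Next I would use the reduction of Remark~\ref{sec_tails}: after removing the extra pieces, every $p\in r_i$ lies in $\overline{R}$, meaning $\lambda_i(p)=0$ together with $\lambda_j(p)\geq 0$ and $\lambda_k(p)\geq 0$, with equalities holding only at the two endpoints $P_{ij}$ and $P_{ik}$. For a non-corner point $p\in r_i$ a first-order Taylor expansion gives
\[
\lambda_i\bigl(p+\varepsilon\nabla\lambda_i(p)\bigr)=\varepsilon\,|\nabla\lambda_i(p)|^2+O(\varepsilon^2),
\]
so for all sufficiently small $\varepsilon>0$ one has $\lambda_i(p+\varepsilon\nabla\lambda_i(p))>0$, while $\lambda_j$ and $\lambda_k$ remain strictly positive by continuity since $\lambda_j(p),\lambda_k(p)>0$. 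Hence $p+\varepsilon\nabla\lambda_i(p)\in R$, which is exactly the assertion that $\nabla\lambda_i$ points into $R$ at $p$.

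It remains to cover the corner points, which I regard as the main obstacle. At $P_{ij}$ one has $\lambda_i=\lambda_j=0$, so the above continuity argument for $\lambda_j$ fails and must be replaced by the transversality statement $\nabla\lambda_i\cdot\nabla\lambda_j>0$; the analogous statement is required at $P_{ik}$. Using the explicit gradients one computes
\[
\nabla\lambda_i\cdot\nabla\lambda_j=x_ix_j-4a_ia_j\bigl(x_i^2+x_j^2-x_k^2\bigr),
\]
and my plan is to substitute the coordinates $(x_i,x_j,x_k)=(\alpha p^0,\beta p^0,p^0)$ from Lemma~\ref{pairwise_inter_r} together with the defining identities $\lambda_i=\lambda_j=0$ to eliminate $x_i^2+x_j^2-x_k^2$, producing an expression purely in $\alpha,\beta,a_i,a_j$. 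The bulk of the work will then be to rewrite this using $\Delta_{ij}=(1-4a_i^2)(1-4a_j^2)$ and the bounds of Lemma~\ref{alpa_beta_ineq} into a manifestly positive form; once this algebraic simplification is done, the corner case follows by the same Taylor argument as above applied to both $\lambda_i$ and $\lambda_j$ simultaneously.
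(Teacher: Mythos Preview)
Your argument for non-corner points is fine, and the plan for the corners does in fact work out (using $\lambda_i=\lambda_j=0$ to eliminate $x_i^2+x_j^2-x_k^2$ one gets $\nabla\lambda_i\cdot\nabla\lambda_j=x_j\bigl(3x_i+2a_jx_j(1-4a_i^2)/a_i\bigr)>0$). But you are proving a strictly stronger and slightly different statement than the paper, and this is what generates all the extra work.

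The paper reads ``$\nabla\lambda_i$ is directed inside $R$'' not as the literal inclusion $p+\varepsilon\nabla\lambda_i(p)\in R$, but simply as: of the two normal directions to $\Lambda_i$, the vector $\nabla\lambda_i$ is the one pointing toward the side on which $R$ lies. Since $R\subset\{\lambda_i>0\}$ by definition, and a gradient always points toward larger values, this is essentially automatic; the paper just records it geometrically by noting $\partial\lambda_i/\partial x_i=2a_ix_i>0$ together with the description of $R$ as lying ``above'' $\Lambda_i$ in the $x_i$-direction (Lemma~\ref{Delta_Conic_gen}). This holds at \emph{every} point of $r_i$, extra pieces included, and needs no corner analysis. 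It is also exactly what the later proofs use: the sign of $(\mathbf V,\nabla\lambda_i)$ is only ever interpreted as ``crossing $\Lambda_i$ toward or away from $R$''.

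Your literal reading, by contrast, forces you to (i) restrict to the trimmed $r_i$ of Remark~\ref{sec_tails}, since on the extra pieces one has $\lambda_j<0$ or $\lambda_k<0$ and $p+\varepsilon\nabla\lambda_i(p)\notin R$ for any $\varepsilon>0$, so the lemma in your sense actually fails there; and (ii) carry out the corner transversality computation, which you leave as a plan. Both complications disappear once you adopt the paper's weaker interpretation.
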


\begin{proof}
Consider    $\Lambda_i \subset \partial(R)$.
It suffices to consider points of  $r_i$.
Using Lemma~\ref{param_r_i} we obtain
$$
\frac{\partial \lambda_i}{\partial x_i}=
\frac{a_i\left(t-m_i\right)\left(t-M_i\right)}{tx_i^2}>0
$$
for all $a_1,a_2,a_3\in (0,1/2)$ and all $t\in (0,m_i)\cup (M_i, +\infty)$,
where $m_i$ and $M_i$ are roots of $t^2-a_i^{-1}t+1=0$ known from~\eqref{m_and_M}.
This means that the normal
$\nabla \lambda_i:=\left(\tfrac{\partial \lambda_i}{\partial x_1}, \tfrac{\partial \lambda_i}{\partial x_2}, \tfrac{\partial \lambda_i}{\partial x_3}\right)
$
is directed into $R$ at every point of the curve $r_i$ on the surface $\Lambda_i\subset \partial(R)$,
because of points of $R$ lie above the surface $\Lambda_i$ in the coordinate system $(i,j,k)$
according to Lemma~\ref{Delta_Conic_gen}.
Note also that
\begin{equation*}\label{gradd}
\frac{\partial \lambda_i}{\partial x_j}=
-\frac{\sqrt{\left(t-m_i\right)\left(t-M_i\right)}}{2tx_i^2}\,\, (2a_it-1), \qquad
\frac{\partial \lambda_i}{\partial x_k}=
\frac{\sqrt{\left(t-m_i\right)\left(t-M_i\right)}}{2tx_i^2}\,\, (t-2a_i).
\end{equation*}
Since the inequalities
$0<m_i<2a_i<\frac{1}{2a_i}<M_i$ hold
for all  $a_i\in (0,1/2)$ (see~\eqref{tlar}),
we have $\frac{\partial \lambda_i}{\partial x_j}>0$ and
$\frac{\partial \lambda_i}{\partial x_k}<0$ for $t\in (0,m_i)$.
Respectively, $\frac{\partial \lambda_i}{\partial x_j}<0$ and
$\frac{\partial \lambda_i}{\partial x_k}>0$
on the component~$r_{ik}$ corresponding to the interval $(M_i, +\infty)$.

Similar reasonings can be repeated for $\Lambda_j$ and $\Lambda_k$.

Everything that was established here
also holds for the corresponding \glqq working`` subintervals
$(0,t_{ij}]\subset (0,m_i)$ and
$[t_{ik}, +\infty)\subset (M_i,+\infty)$
for $t_{ij}$ and $t_{ik}$ evaluated in Lemma~\ref{pairwise_inter_r}.
Lemma~\ref{normal_inside_gen} is proved.
\end{proof}

\begin{lemma}\label{znak_inner_prod}
For  an arbitrary generalized Wallach space with $a_1,a_2,a_3\in (0,1/2)$
for every $i=1,2,3$ and for all $t\in (0,m_i) \cup (M_i,+\infty)$
the sign of the inner product $({\bold V}, \nabla \lambda_i)$
evaluated at an arbitrary point of the curve $r_i$
coincides with the sign of the polynomial
\begin{equation}\label{Hgoi}
h(t)=c_0t^4+c_1t^3+c_2t^2+c_1t+c_0
\end{equation}
with coefficients
\begin{equation}\label{alpha_coef_gen}
\begin{array}{l}
c_0:=a_i^2\left(1-2a_i+2a_j+2a_k\right)\left(2a_i+2a_j+2a_k-1\right),\\
c_1:=a_i(1-2a_i)^3-4a_i\left(4a_i^2+1\right)(a_j+a_k)^2,\\
c_2:=\left(16a_i^4+16a_i^2+1\right)(a_j+a_k)^2+2a_i(1-a_i)(1-2a_i)^2.
\end{array}
\end{equation}
\end{lemma}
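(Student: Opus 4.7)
The plan is to reduce the computation of $(\mathbf{V}, \nabla \lambda_i)$ to a single-variable polynomial in the parameter $t$ by exploiting the constraint $\lambda_i=0$ together with the parametrization of $r_i$ from Lemma~\ref{param_r_i}. Since I already know the target polynomial $h(t)$, the task is largely one of verification; the economical strategy is to (a) simplify $f_i$ drastically on $r_i$, (b) keep $f_j$ and $f_k$ in a form where the common factor $B$ is visible, and (c) substitute the parametric expressions to collect a rational function whose numerator is $h(t)$ times a manifestly positive factor.

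First I would observe that $\lambda_i=0$ is equivalent to
\[
a_i\,\frac{x_i^2}{x_jx_k}=a_i\,\frac{x_j}{x_k}+a_i\,\frac{x_k}{x_j}-1,
\]
and plugging this into the formula for $f_i$ makes the Ricci-curvature part cancel perfectly, leaving $f_i=x_iB$ along $r_i$. Next I would compute the components of the gradient directly from $\lambda_i=a_i(x_i^2-x_j^2-x_k^2)+x_jx_k$, namely $\partial_{x_i}\lambda_i=2a_ix_i$, $\partial_{x_j}\lambda_i=x_k-2a_ix_j$, $\partial_{x_k}\lambda_i=x_j-2a_ix_k$. Then the inner product becomes
\[
(\mathbf{V},\nabla\lambda_i)=2a_ix_i^2\,B+f_j(x_k-2a_ix_j)+f_k(x_j-2a_ix_k),
\]
and substituting the full expressions for $f_j$ and $f_k$ (without imposing $\lambda_j=0$ or $\lambda_k=0$) gives a rational function in $x_i,x_j,x_k$.

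Now I use the parametrization $x_j=tx_k$ and $x_i^2=(t^2-a_i^{-1}t+1)x_k^2$ of $r_i$. Every term is homogeneous of the same degree in $x_k$, so after clearing denominators the whole expression reduces to a rational function of $t$ alone, multiplied by a positive power of $x_k$ and a positive prefactor coming from the common denominator $a_ix_jx_k$ and the factor $(\sum a_i^{-1})^{-1}$ hidden inside $B$. A bookkeeping step collects powers of $t$; the degree of the numerator turns out to be $4$, and the constant/leading coefficients are given by the single monomial coming from $2a_ix_i^2B$ together with the cross terms. I would then read off $c_0,c_1,c_2$ and check they match~\eqref{alpha_coef_gen}.

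The main obstacle is the algebraic bookkeeping: the expression has many terms involving $a_i,a_j,a_k$ and fractions $x_\bullet/x_\bullet$, and a careless collection will obscure the final palindromic structure $h(t)=t^4h(1/t)$. To keep control I would exploit this symmetry from the start: the transformation $t\mapsto 1/t$ interchanges the components $r_{ij}$ and $r_{ik}$ (because $m_iM_i=1$) and corresponds to swapping $x_j$ with $x_k$, under which $\lambda_i$, $\nabla\lambda_i$, and the pair $(f_j,f_k)$ transform in a compatible way that forces the coefficient symmetry $c_0=c_4$, $c_1=c_3$. This a priori palindromicity cuts the verification in half: I only need to compute the coefficients of $t^0$, $t^1$, and $t^2$ (say by specializing to convenient values $t\to 0^+$ and $t\to 1$), and then the stated formulas for $c_0,c_1,c_2$ follow. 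The sign statement is immediate once the positive prefactor (a product of powers of $x_k$ and the positive quantity $(\sum a_i^{-1})^{-1}$) is exposed.
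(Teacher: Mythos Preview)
Your reduction $f_i=x_iB$ on $r_i$ is correct, and the gradient components are right, but the central claim that ``after clearing denominators the whole expression reduces to a rational function of $t$ alone'' with a degree-$4$ numerator is false. The parametrization gives $x_i=x_k\sqrt{t^2-a_i^{-1}t+1}$, and the inner product $(\mathbf V,\nabla\lambda_i)$ contains terms of \emph{odd} degree in $x_i$ (coming, for instance, from $a_j x_i/x_k$ inside $f_j$, and from $1/(a_ix_i)$ and $x_i/(x_jx_k)$ inside $B$). After substitution you therefore obtain an expression of the form $P(t)+Q(t)\sqrt{t^2-a_i^{-1}t+1}$, not a polynomial times a positive factor. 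Concretely, the paper finds
\[
(\mathbf V,\nabla\lambda_i)=\frac{1}{2a_i\,t\,x_i^{2}}\bigl(F-G\bigr),\qquad
F=(a_j+a_k)(t-2a_i)(2a_it-1),\quad
G=(1-2a_i)\,a_i\,(t+1)\sqrt{t^2-a_i^{-1}t+1},
\]
so the square root survives.

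The idea your plan is missing is precisely how to pass from this irrational expression to the polynomial $h(t)$. One must first check, using the inequalities $0<m_i<2a_i<\tfrac{1}{2a_i}<M_i$, that $F>0$ and $G>0$ on $(0,m_i)\cup(M_i,\infty)$; only then does $\operatorname{sign}(F-G)=\operatorname{sign}\bigl((F-G)(F+G)\bigr)=\operatorname{sign}(F^2-G^2)$, and $h(t):=F^2-G^2$ is the stated palindromic quartic. Without this squaring step (and the positivity check that legitimizes it) you cannot ``read off $c_0,c_1,c_2$'' from a polynomial numerator, because there is none. Note also that your proposed specialization $t=1$ lies in the forbidden gap $(m_i,M_i)$ where the curve $r_i$ is not defined; it is usable only after $h(t)$ has been produced as a genuine polynomial identity via $F^2-G^2$.
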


\begin{proof}
As calculations show $({\bold V}, \nabla \lambda_i)$ can be represented by
the following expression:
\begin{equation*}
\begin{array}{l}
({\bold V}, \nabla \lambda_i)=f_i\,\dfrac{\partial \lambda_i}{\partial x_i} + f_j\,\dfrac{\partial \lambda_i}{\partial x_j} + f_k\,\dfrac{\partial \lambda_i}{\partial x_k} \\
= \dfrac{a_i}{2\,(a_ia_j + a_ia_k + a_ja_k)x_i^2\,x_j^2\,x_k^2}
\Big[
(a_i+a_j)(a_j+a_k)(a_k+a_i)\left(x_j^4+x_k^4-x_i^4\right)\\
+ a_k(a_i+a_j)\left(x_i^2-x_j^2\right)x_ix_j+a_j(a_i+a_k)\left(x_i^2-x_k^2\right)x_ix_k
-2a_i(a_j+a_k)\left(x_j^2+x_k^2\right)x_jx_k\\
+\big\{\left(2a_ia_k+a_ia_j+a_ja_k-a_j\right)x_j+\left(2a_ia_j+a_ia_k+a_ja_k-a_k\right)x_k\big\}\,x_ix_jx_k\\
+ (a_j+a_k)\left(2a_i^2-2a_ia_j-2a_ia_k-2a_j a_k+1\right)x_j^2x_k^2\Big].
\end{array}
\end{equation*}
Substituting  the parametric equations
$x_i=\phi_i(t)$, $x_j=\phi_j(t)$, $x_k=\phi_k(t)$,  $t\in (0,m_i)\cup (M_i, +\infty)$,
of the curve~$r_i$  found in Lemma~\ref{param_r_i}
into the latter expression for $({\bold V}, \nabla \lambda_i)$ we obtain
$
\left({\bold V}, \nabla \lambda_i\right)=\dfrac{1}{2a_itx_i^2}\, \left(F-G\right)
$,
where
$F := (a_j+a_k)(t-2a_i)\left(2a_it-1\right)$ and
$G := (1-2a_i)\, a_i\, (t+1)\sqrt{t^2-a_i^{-1}t+1}$.
The inequalities~\eqref{tlar} imply $F>0$ and  $G>0$
for all  $t\in (0,m_i)\cup (M_i, +\infty)$ and all $a_i, a_j, a_k\in (0,1/2)$.
Therefore for $t\in (0,m_i)\cup (M_i, +\infty)$
the sign of $({\bold V}, \nabla \lambda_i)$
coincides with the sign of the following fourth degree polynomial
in $t$:
$$
h(t):=F^2-G^2=(a_j+a_k)^2(t-2a_i)^2\left(2a_it-1\right)^2 -
(1-2a_i)^2a_i(t+1)^2\left(a_it^2-t+a_i\right).
$$
Collecting its similar terms
we obtain the polynomial~\eqref{Hgoi} with coefficients in~\eqref{alpha_coef_gen}.
Lemma~\ref{znak_inner_prod} is proved.
\end{proof}

\medskip

Since  $t=0$ is not a root of~$h(t)=0$
introducing a new variable $y:=t+t^{-1}$ the equation $h(t)=0$
can be reduced to a quadratic equation
\begin{equation}\label{P2(y)}
p(y):=c_0 y^2+c_1 y+(c_2-2c_0)=0.
\end{equation}

\begin{lemma}\label{lem_20_07_2024}
Two different real roots
$t_1=\dfrac{y_0-\sqrt{y_0^2-4}}{2}\in (0,m_i)$ and
$t_2=t_1^{-1}=\dfrac{y_0+\sqrt{y_0^2-4}}{2}\in (M_i, +\infty)$
of the equation $h(t)=0$ correspond to a given root~$y_0\in \left(a_i^{-1}, +\infty\right)$
of the equation $p(y)=0$.
\end{lemma}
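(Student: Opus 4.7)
The key observation is that $h(t)$ is a palindromic (reciprocal) polynomial, i.e.\ the coefficient sequence $(c_0, c_1, c_2, c_1, c_0)$ is symmetric. My plan is to exploit this symmetry, reduce to the quadratic $p(y)=0$, and then use a simple monotonicity argument to locate the roots with respect to $m_i$ and $M_i$.

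First, I would formally justify the reduction: since $t=0$ is not a root of $h(t)=0$ (note $c_0 \ne 0$ because $a_1+a_2+a_3 \ne 1/2$ in the generic case; in the degenerate case one checks separately, or simply observes $h(t)/t^2$ is well-defined for $t>0$), dividing by $t^2$ gives
\[
t^{-2}h(t)=c_0(t^2+t^{-2})+c_1(t+t^{-1})+c_2 = c_0 y^2 + c_1 y + (c_2-2c_0)=p(y),
\]
using $t^2+t^{-2}=(t+t^{-1})^2-2=y^2-2$. Therefore each root $y_0$ of $p(y)=0$ yields the two values of $t$ solving $t+t^{-1}=y_0$, i.e.\ the roots of $t^2-y_0 t + 1=0$, namely
\[
t_{1,2}=\frac{y_0\mp\sqrt{y_0^2-4}}{2}.
\]

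Next I would verify that $t_1, t_2$ are real and distinct: since $y_0 > a_i^{-1} > 2$ (because $a_i<1/2$), the discriminant $y_0^2-4$ is strictly positive. By Vi\`ete's formulas $t_1 t_2 = 1$ and $t_1 < t_2$, so $0<t_1<1<t_2$ and $t_2=t_1^{-1}$ as claimed.

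Finally, the localization. Recall from \eqref{m_and_M} that $m_i$ and $M_i$ are the roots of $t^2-a_i^{-1}t+1=0$, so by the same formula
\[
M_i=\frac{a_i^{-1}+\sqrt{a_i^{-2}-4}}{2},\qquad m_i=M_i^{-1}.
\]
The map $\eta\mapsto (\eta+\sqrt{\eta^2-4})/2$ is strictly increasing for $\eta\ge 2$ (its derivative equals $\tfrac12\bigl(1+\eta/\sqrt{\eta^2-4}\bigr)>0$). Applying this monotonicity to $y_0>a_i^{-1}$ yields $t_2>M_i$, whence $t_1=t_2^{-1}<M_i^{-1}=m_i$. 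Thus $t_1\in(0,m_i)$ and $t_2\in(M_i,+\infty)$, completing the proof.

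I do not foresee a serious obstacle here; the only point to handle with care is the (harmless) non-vanishing of $c_0$ needed to make the palindromic reduction rigorous, and keeping track that the two branches $t_1$ and $t_2$ land in the two distinct intervals prescribed in Lemma~\ref{param_r_i}. Everything else is a one-line Vi\`ete/monotonicity argument.
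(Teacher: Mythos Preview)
Your proof is correct and follows essentially the same route as the paper. The paper's proof is the one-liner ``the proof follows from the properties of the functions $x\mapsto\frac{1-\sqrt{1-4x^2}}{2x}$ (increases) and $x\mapsto\frac{1+\sqrt{1-4x^2}}{2x}$ (decreases) for $0<x<1/2$,'' which is exactly your monotonicity argument written in the reciprocal variable $x=1/\eta$; your derivation is simply a more explicit version of the same idea, and your extra verification that $y_0>2$ (hence $t_1\ne t_2$) and $t_1t_2=1$ fills in details the paper leaves implicit.
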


\begin{proof}
The proof follows from the properties of the functions
$x \mapsto \dfrac{1-\sqrt{1-4x^2}}{2x}$ (increases) and
$x \mapsto \dfrac{1+\sqrt{1-4x^2}}{2x}$ (decreases) for $0<x<1/2$.
\end{proof}

\begin{lemma}\label{znaki_coeff}
In an arbitrary generalized Wallach space with $a_1,a_2,a_3\in (0,1/2)$
the following assertions hold for the coefficients $c_n=c_n(a_1,a_2,a_3)$, ($n=0,1,2$), of the polynomial $p(y)$ in~\eqref{P2(y)}:

$c_2>0$ and  $c_2-2c_0>0$ for all $a_1,a_2,a_3\in (0,1/2)$;

$c_0=0$ and $c_1<0$  if   $a_1+a_2+a_3=1/2$;

$c_0<0$ if  $a_1+a_2+a_3<1/2$;

$c_0>0$ and $c_1<0$  if  $a_1+a_2+a_3>1/2$.
\end{lemma}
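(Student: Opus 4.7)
The key observation is that each coefficient factors or recombines in a way that isolates the sum $a_1+a_2+a_3$ (for $c_0$) or compares $(a_j+a_k)^2$ against $(1-2a_i)^2/4$ (for $c_1$). The calculations are elementary; the main task is to arrange them so the signs are transparent.

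\textbf{Step 1 (the coefficient $c_0$).} Write
\[
c_0 = a_i^2 \, \bigl(1 - 2a_i + 2(a_j+a_k)\bigr)\,\bigl(2(a_1+a_2+a_3) - 1\bigr).
\]
Since $a_i\in(0,1/2)$ gives $1 - 2a_i > 0$ and $a_j + a_k > 0$, the first bracket is positive. Hence $\operatorname{sgn}(c_0) = \operatorname{sgn}(2(a_1+a_2+a_3)-1)$, which immediately yields the three statements about $c_0$ under the three cases $a_1+a_2+a_3 \lessgtr 1/2$ (and $=1/2$).

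\textbf{Step 2 (the coefficient $c_2$ and the constant term $c_2-2c_0$).} Positivity of $c_2$ is immediate, as both summands in the definition are non-negative and the first is strictly positive. For $c_2 - 2c_0$, set $s := a_j + a_k$ and use the difference-of-squares trick
\[
(1-2a_i+2s)(2a_i+2s-1) = (2s)^2 - (1-2a_i)^2 = 4s^2 - (1-2a_i)^2.
\]
Substituting and collecting the $s^2$ terms gives $(16a_i^4+16a_i^2+1-8a_i^2)s^2 = (4a_i^2+1)^2 s^2$, while the remaining $a_i$-terms combine as $2a_i(1-2a_i)^2[(1-a_i)+a_i] = 2a_i(1-2a_i)^2$. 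Thus
\[
c_2 - 2c_0 = (4a_i^2+1)^2 (a_j+a_k)^2 + 2a_i(1-2a_i)^2,
\]
which is manifestly positive for $a_i \in (0,1/2)$ and $a_j,a_k > 0$.

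\textbf{Step 3 (the coefficient $c_1$ under $a_1+a_2+a_3 \ge 1/2$).} Factor
\[
c_1 = a_i\bigl[(1-2a_i)^3 - 4(4a_i^2+1)(a_j+a_k)^2\bigr].
\]
From $a_1+a_2+a_3 \ge 1/2$ we get $a_j+a_k \ge 1/2 - a_i > 0$, and squaring gives $(a_j+a_k)^2 \ge (1-2a_i)^2/4$. Hence
\[
4(4a_i^2+1)(a_j+a_k)^2 \;\ge\; (4a_i^2+1)(1-2a_i)^2,
\]
and the strict inequality $(4a_i^2+1)(1-2a_i)^2 > (1-2a_i)^3$ follows from
\[
(4a_i^2+1) - (1-2a_i) = 2a_i(2a_i+1) > 0.
\]
Combining these yields $c_1 < 0$, covering both the boundary case ($=1/2$) and the strict case ($>1/2$).

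\textbf{Anticipated obstacle.} None of the individual computations is difficult; the only risk is algebraic slip-ups in expanding $c_2 - 2c_0$. The clean recombination into $(4a_i^2+1)^2(a_j+a_k)^2 + 2a_i(1-2a_i)^2$ is the one place where a bookkeeping error could obscure positivity, so I would verify it by an independent expansion before writing the proof.
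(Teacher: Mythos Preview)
Your proof is correct and follows essentially the same route as the paper: both use the substitution $s=a_j+a_k$, read the sign of $c_0$ off its factored form, and handle $c_1$ by comparing $(a_j+a_k)^2$ with $(1-2a_i)^2/4$. Your chain of inequalities for $c_1$ is a slightly cleaner variant of the paper's argument (which instead locates the positive zero of $c_1$ as a function of $\mu$ and shows it lies below $1/2-a_i$), and your expression $c_2-2c_0=(4a_i^2+1)^2 s^2+2a_i(1-2a_i)^2$ is in fact the correct one---the paper drops the square on $(1+4a_i^2)$, though this does not affect the conclusion.
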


\begin{proof}
The inequality $c_2>0$ follows from $16a_i^4+16a_i^2+1>0$ (see~\eqref{alpha_coef_gen}).
Introducing a new variable  $\mu:= a_j+a_k$ we obtain the inequality
 $$
 c_2-2c_0=\left(1+4a_i^2\right)\mu^2+2a_i(1-2a_i)^2>0
 $$
true for all $a_i,a_j,a_k\in (0,1/2)$.

\medskip
{\it The case $a_1+a_2+a_3=1/2$}. Replacing  $a_j+a_k$ by $1/2-a_i$ in~\eqref{alpha_coef_gen},
we obtain
$c_0=0$ and $c_1=-2(1+2a_i)(1-2a_i)^2a_i^2<0$.

\medskip
{\it The case $a_1+a_2+a_3<1/2$}. Clearly $c_0<0$.

\medskip
{\it The case $a_1+a_2+a_3>1/2$}. Obviously  $c_0>0$.
The coefficient $c_1$  can be considered as a function
$$
c_1=W(\mu):=a_i(1-2a_i)^3-4a_i\left(1+4a_i^2\right)\mu^2
$$
of the independent variable $\mu= a_j+a_k$.
Clearly $W(\mu)$ admits two zeros $\mu=-\mu_0$ and $\mu=\mu_0$, where
$\mu_0:=\dfrac{1-2a_i}{2}\sqrt{\dfrac{1-2a_i}{1+4a_i^2}}$.
Since  $0<\mu_0<\dfrac{1}{2}-a_i$
we have $W(\mu)\Big|_{\mu=1/2-a_i}<0$.
Then  $W(\mu)<0$ for all  $\mu>1/2-a_i$ meaning that
$c_1<0$ at $a_i+a_j+a_k>1/2$. Lemma~\ref{znaki_coeff} is proved.
\end{proof}

\section{Proofs of Theorems~\ref{thm_sum_a_i<1/2}~-- \ref{thm_3} and Corollary~\ref{corol_1}}

It suffices to consider the system~\eqref{three_equat} on its invariant surface~$\Sigma$
defined by the equation  $\operatorname{Vol}:=x_1^{1/a_1}x_2^{1/a_2}x_3^{1/a_3}=1$.
Hence we are interested in parameters of metrics which belong to the domain $\Sigma \cap R$ bounded by
the curves $r_i=\Sigma \cap \Lambda_i$ for $i=1,2,3$.
In order to establish behavior of trajectories of the system~\eqref{three_equat}
with respect to that domain  we will study their interrelations (namely, the number of intersection points) with curves forming the boundary of that domain.

\subsection{The case $a_1+a_2+a_3\le 1/2$}
Examples~\ref{ex_sum_a_i<1/2} and \ref{ex_sum_a_i=1/2}
illustrate Theorem~\ref{thm_sum_a_i<1/2}.

\bigskip

\begin{proof}[Proof of Theorem~\ref{thm_sum_a_i<1/2}]
It suffices to consider the border curve $r_i$ only.
Similar reasoning can be repeated for the curves $r_j$ and $r_k$
by simple permutations of indices $\{i,j,k\}=\{1,2,3\}$ in~\eqref{parcurvegen}.
Let us consider the cases  $a_1+a_2+a_3<1/2$  and  $a_1+a_2+a_3=1/2$ separately.

\smallskip

{\bf The case $a_1+a_2+a_3<1/2$}.
For $a_1+a_2+a_3<1/2$  the discriminant
\begin{equation*}\label{disP2(y)}
c_1^2+8c_0^2-4c_0c_2
\end{equation*}
of the quadratic equation $p(y)=c_0 y^2+c_1 y+(c_2-2c_0)=0$ is positive,
because $c_0<0$ and $c_2>0$ by Lemma~\ref{znaki_coeff}.
Hence  $p(y)=0$ admits two different real roots.
We claim that actually a single root of $p(y)=0$ can belong to $(a_i^{-1}, +\infty)$.
Indeed
\begin{equation}\label{sign_P2ofa_inverse}
p(a_i^{-1})=(a_k+a_j)^2(1-2a_i)^2(1+2a_i)^2>0
\end{equation}
independently on $a_i,a_j,a_k$.
It is clear also
$p(+\infty):=\lim_{y \to +\infty} p(y)=-\infty$
due to  $c_0<0$.
Therefore $p(y_0)=0$ for some unique $y_0\in (a_i^{-1}, +\infty)$.

No root of the quadratic polynomial $p(y)$
can be contained in the interval $[0,a_i^{-1}]$
since  $p(0)=c_2-2c_0>0$ by the same Lemma~\ref{znaki_coeff}.
Then another root of $p(y)=0$ distinct from $y_0$ must be negative
and is not of interest providing  negative roots to $h(t)=0$.
Therefore the polynomial $h(t)=c_0t^4+c_1t^3+c_2t^2+c_1t+c_0$ of degree~$4$ admits two
positive roots $t_1\in (0,m_i)$ and $t_2\in (M_i, +\infty)$
which correspond to $y_0$ according to Lemma~\ref{lem_20_07_2024}.
Note also that $h(t)$ has a \glqq $\cap$``-shaped graph for $t>0$ and
$h(t)<0$ for $t\in (0,t_1)\cup (t_2,+\infty)$
and $h(t)>0$ for  $t\in (t_1, m_i)\cup (M_i,t_2)$.
Then according to Lemma~\ref{znak_inner_prod} we have
$$
\operatorname{sign}({\bold V}, \nabla \lambda_i)=
\begin{cases}
-1, &\mbox{if} \quad t\in (0,t_1),\\
~~0, &\mbox{if} \quad t=t_1,\\
+1,  &\mbox{if} \quad t\in (t_1, m_i)
\end{cases}
\mbox{~~~ and ~~~}
\operatorname{sign}({\bold V}, \nabla \lambda_i)=
\begin{cases}
+1,  &\mbox{if} \quad t\in (M_i, t_2), \\
~~0, &\mbox{if} \quad t=t_2,\\
-1, &\mbox{if} \quad t\in  (t_2, +\infty)
\end{cases}
$$
respectively on~$r_{ij}$ and~$r_{ik}$.
To reach  our aims it is quite enough to show that
trajectories of the system~\eqref{three_equat}
will leave the domain~$R$
across some part of some component of the curve $r_i$.
Take, for example, the second component $r_{ik}$ of~$r_i$
(a similar analysis can be given for the component
$r_{ij}$ of~$r_i$).

As it follows from the formulas above
$({\bold V}, \nabla \lambda_i)>0$ for  $t\in (M_i,t_2)$.
This means that  on every point of a part of the curve  $r_{ik}$
which corresponds to $(M_i,t_2)$ under the parametrization~\eqref{parcurvegen}
the associated vector field ${\bold V}$ of the system~\eqref{three_equat}
forms  an acute angle with the normal  $\nabla \lambda_i$
of the surface~$\Lambda_i\subset \partial(R)$
(in fact its component $\Lambda_{ik}$).
Since  the normal $\nabla \lambda_i$ is directed inside~$R$ at every point of  $r_i$
due to Lemma~\ref{normal_inside_gen}, we conclude that
trajectories of~\eqref{three_equat} attend~$R$ at $t\in (M_i,t_2)$.

For $t\in (t_2, +\infty)$  clearly
$({\bold V}, \nabla \lambda_i)<0$ meaning
that~${\bold V}$ forms  an obtuse angle with the normal~$\nabla \lambda_i$.
Thus we established that trajectories of the system~\eqref{three_equat}
originated in~$R$ will leave $R$ at least through a part of the curve~$r_{ik}$ and never cross or touch $r_{ik}$ again for all $t>t_2$.
 This means that there exist metrics that lose the positivity of the Ricci curvature  in finite time.
Theorem~\ref{thm_sum_a_i<1/2} is proved for the case  $a_1+a_2+a_3<1/2$.

\smallskip

{\bf The case $a_1+a_2+a_3=1/2$}.
Obviously we deal with the linear function  $p(y)=c_1 y+c_2$ since
$c_0=0$, $c_1<0$ and $c_2>0$ by Lemma~\ref{znaki_coeff}.
Moreover, the only root $y_0$ of  $p(y)=0$ is positive.
Since $p(a_i^{-1})>0$ due to~\eqref{sign_P2ofa_inverse}
and
$$
p(+\infty):=\lim_{y \to +\infty} p(y)=-\infty
$$
due to  $c_1<0$ we have $y_0\in (a_i^{-1}, +\infty)$.
Lemma~\ref{lem_20_07_2024} implies that the polynomial $h(t)=(c_1t^2+c_2t+c_1)\,t$ of degree $3$ has two roots $t_1\in (0,m_i)$ and  $t_2\in (M_i, +\infty)$ which correspond to $y_0$.
We are in the same situation as in the considered case $a_1+a_2+a_3<1/2$.
Further reasoning is similar.
Theorem~\ref{thm_sum_a_i<1/2} is proved.
\end{proof}

\begin{remark}\label{rem_240724}
According to Remark~\ref{sec_tails} we had to consider
 intervals $(0, t_{ij}]\subset  (0,m_i)$
and $[t_{ik}, +\infty)\subset(M_i,+\infty)$ in Theorem~\ref{thm_sum_a_i<1/2}
instead of~$(0, m_i)$ and $(M_i,+\infty)$
cutting of the extra pieces of the curves $r_i$, $r_j$ and $r_k$
that remain after their pairwise intersections.
Although such a replacing   does not cancel the existence
of trajectories leaving~$R$ in any way, and therefore does not  affect
the proof of Theorem~\ref{thm_sum_a_i<1/2},
we get a little clarification to the qualitative picture of
incoming trajectories.
Such a picture depends on the relative position of $t_1$ and $t_{ij}$ within the interval $(0,m_i)$, for instance, if the component $r_{ij}$ is considered.

If  $t_{ij}< t_1$ then $h(t)<0$ on all \glqq useful``
part of $r_{ij}$ corresponding to~$t\in (0,t_{ij}]$, hence $h(t)$ changes its sign only at the
 \glqq tail`` of the curve $r_{ij}$ behind the point $P_{ij}$.
Therefore $({\bold V}, \nabla \lambda_i)<0$ for all  $t\in (0,t_{ij}]$
and hence every point on~$r_{ij}$
emits trajectories towards the exterior of~$R$. The interval  $(0,t_{ij}]$ above
should be replaced by $(0,t_{ij})$ if $t_{ij}=t_1$.
In the case $t_{ij}>t_1$ trajectories
come into~$R$ through the part of  $r_{ij}$
corresponding to~$t\in (t_1,t_{ij}]\subset (t_1, m_i)$
and leave  $R$ through its  part corresponding to
$t\in (0, t_1)$.

Analogously, if $t_{ik}<t_2$ then trajectories attend~$R$ for
 $t\in [t_{ik}, t_2)\subset (M_i,t_2)$ and  leave~$R$ for
$t>t_2$.
If $t_{ik}>t_2$ (respectively $t_{ik}=t_2$) then outgoing of trajectories from~$R$
happens through every point of~$r_{ik}$ for $t\ge t_{ik}$
(respectively $t> t_{ik}$).

Thus analyzing all possible outcomes we conclude
that trajectories of~\eqref{three_equat}
will leave~$R$ in any case whenever $a_1+a_2+a_3\le 1/2$:
either through entire curve $r_i$ either through some its part.
\end{remark}

\begin{remark}\label{rem_280724}
In the case $a_1+a_2+a_3\le 1/2$
no metric~\eqref{metric}  can evolve again into a metric with positive Ricci curvature,
being transformed once into a metric with mixed Ricci curvature,
but some metrics with mixed Ricci curvature can be transformed into metrics with positive Ricci curvature, and then again into metrics with mixed Ricci curvature.
This follows  from the proof of Theorem~\ref{thm_sum_a_i<1/2} according to which
no trajectory of~\eqref{three_equat}
returns back to $R$ leaving~$R$ once,
but some trajectories attend $R$ and come back if initiated in the exterior of~$R$.
\end{remark}

\medskip

\subsection{The case $a_1+a_2+a_3> 1/2$}
Recall the specific parameters $\theta_i:=a_i-\frac{1}{2}+\frac{1}{2}\sqrt{\frac{1-2a_i}{1+2a_i}}$
exposed in the text of Theorem~\ref{thm_sum_a_i>1/2}.
Clearly $\theta_i\in (0,1)$  for all $i=1,2,3$ and all $a_i\in (0,1/2)$.

\bigskip

\begin{proof}[Proof of Theorem~\ref{thm_sum_a_i>1/2}]
It suffices to consider the curve $r_i$ only.
In formulas~\eqref{alpha_coef_gen} for $c_0, c_1$ and $c_2$
replace the sum $a_j+a_k$  by $\frac{1}{2}-a_i+\theta$:
\begin{equation}\label{alpha_coef_gen1}
\begin{array}{l}
c_0=4a_i^2\,\theta\left(1-2a_i+\theta\right),\\
c_1:=a_i\left(1-2a_i\right)^3-a_i\left(4a_i^2+1\right)\left(1-2a_i+2\theta\right)^2,\\
c_2:=\dfrac{1}{4}\left(16a_i^4+16a_i^2+1\right)\left(1-2a_i+2\theta\right)^2+
2a_i(1-a_i)(1-2a_i)^2.
\end{array}
\end{equation}
Since each  coefficient $c_0,c_1$ and $c_2$ depends on the parameters $a_i$ and $\theta$ only
so does the discriminant  of the quadratic equation
$p(y)=0$ in~\eqref{P2(y)}.
By this reason denote them by $D_i$:
\begin{multline*}\label{expr_15062024}
D_i:=c_1^2-4c_0(c_2-2c_0)=c_1^2+8c_0^2-4c_0c_2\\
=-4a_i^2(1+2a_i)^2(1-2a_i)^3
\Big\{(1+2a_i)\theta^2+\left(1-4a_i^2\right)\theta-(1-2a_i)a_i^2\Big\}.
\end{multline*}

Since $c_0>0$ for $a_i+a_j+a_k>1/2$ and $c_2>0$ by Lemma~\ref{znaki_coeff},
the polynomial  $D_i$ can accept  values of any sign.
Firstly, we find roots of $D_i=0$.
Observe that the value $\theta=\theta_i$
is exactly the unique root of the quadratic equation
$$
(1+2a_i)\theta^2+\left(1-4a_i^2\right)\theta-(1-2a_i)a_i^2= 0.
$$
Its another root is negative.

\medskip
$(1)$
{\it The case $D_i<0$}. The inequality $D_i<0$ is equivalent to
$$
(1+2a_i)\theta^2+\left(1-4a_i^2\right)\theta-(1-2a_i)a_i^2>0
$$
which has solutions $\theta>\theta_i$.
Therefore the equation $p(y)=0$, and hence the equation $h(t)=0$
has no real roots if $\theta\in \left(\theta_i,1\right)$.
Then  $h(t)>0$ for all $t\in (0,m_i)\cup (M_i,+\infty)$ due to $c_0>0$,
in other words, by Lemma~\ref{znak_inner_prod} the sign of
$({\bold V}, \nabla \lambda_i)$ is positive along the curve~$r_i$,
that means there is no arc or even a point on the curve~$r_i$,
across which trajectories of the system~\eqref{three_equat} could leave the domain~$R$,
because of  the normal $\nabla \lambda_i$ of the
cone~$\Lambda_i$ is directed inside~$R$ at every point of~$r_i$
independently on values of $a_1$, $a_2$ and $a_3$ due to Lemma~\ref{normal_inside_gen}.

\medskip

{\it The case $D_i= 0$}. Since  $D_i=0$ happens at $\theta=\theta_i$ only,
for the sum $a_j+a_k$ we have
$$
a_j+a_k=\frac{1}{2}-a_i+\theta_i=\frac{1}{2}\sqrt{\dfrac{1-2a_i}{1+2a_i}}.
$$
The coefficients~\eqref{alpha_coef_gen1} take the following forms at $\theta=\theta_i$:
{\renewcommand{\arraystretch}{2.5}
\begin{equation*}\label{alpha_coef_gen2}
\begin{array}{l}
\widetilde{c}_0:=-a_i^2\left((1-2a_i)^2-4(a_j+a_k)^2\right)
=4a_i^4 \cdot \dfrac{1-2a_i}{1+2a_i},\\
\widetilde{c}_1:=a_i\left(1-2a_i\right)^3-a_i\left(4a_i^2+1\right)\cdot \dfrac{1-2a_i}{1+2a_i},\\
\widetilde{c}_2:=\dfrac{1}{4}\left(16a_i^4+16a_i^2+1\right)
\cdot \dfrac{1-2a_i}{1+2a_i}+2a_i\left(1-a_i\right)\left(1-2a_i\right)^2.
\end{array}
\end{equation*}
}

In what follows an estimation
\begin{equation*}\label{alpha of teta0}
y_0-a_i^{-1}=-\frac{\widetilde{c}_1}{2\widetilde{c}_0}-a_i^{-1}
=\frac{(1-2a_i)^2(1+2a_i)^2}{4a_i^2(1-4a_i^2)}=
\frac{1-4a_i^2}{4a_i^2}>0
\end{equation*}
 for the unique root
$y_0=-\dfrac{\widetilde{c}_1}{2\widetilde{c}_0}$ of the equation
$\widetilde{P}_2(y)=\widetilde{c}_0y^2+\widetilde{c}_1y+
\widetilde{c}_2-2\widetilde{c}_0=0$.

Since  $y_0\in (a_i^{-1}, +\infty)$ is the single root  of $p(y)$ of multiplicity $2$
the polynomial $h(t)$ of degree~$4$ has two zeros $t_1$  and $t_2$ each of multiplicity $2$
such that  $t_1\in (0,m_i)$ and $t_2\in (M_i,+\infty)$.
Since $c_0>0$  then $h(t)>0$ anywhere on $(0,m_i)\cup (M_i,+\infty)$ excepting
the points $t_1$ and $t_2$ (the shape of the graph of $h(t)$ is similar to the letter \glqq W`` touching the $t$-axis at the points $t_1$ and $t_2$).
Then by Lemma~\ref{znak_inner_prod} no trajectory of~\eqref{three_equat}
can leave~$R$ through its boundary curve~$r_i$.
Note that in the case $D_i=0$ we are able to find the mentioned   multiple roots of $h(t)$ explicitly:
$$
t_{1,2}=\frac{1}{8a_i^2}
\left[-4a_i^2+4a_i+1\mp(1+2a_i)\sqrt{(1+6a_i)(1-2a_i)}\right], \\
$$
since $h(t)=(1-2a_i)(1+2a_i)^{-1}\Big(a_i^2t^2+\left(a_i^2-a_i-0.25\right)t+a_i^2\Big)^2$.

Thus uniting the cases  $\theta>\theta_i$ ($D_i<0$) and  $\theta=\theta_i$ ($D_i=0$)
we conclude that no trajectory of~\eqref{three_equat} can leave $R$ through $r_i$.
The same will hold on the curves  $r_j$ and $r_k$, if
$\theta\ge \max\left\{\theta_i, \theta_j, \theta_k\right\}$
(all metrics maintain the positivity of the Ricci curvature).
This finishes the proof of the first assertion in Theorem~\ref{thm_sum_a_i>1/2}.
See also Example~\ref{ex_sum>1/2_4}.

\medskip
$(2)$
To prove the second assertion in Theorem~\ref{thm_sum_a_i>1/2}
assume that $\theta< \theta_i$ for some $i$.
Such an assumption equivalent to  $D_i> 0$.
We claim  that  both roots of $p(y)=0$
belong to $(a_i^{-1}, +\infty)$ in this case.
We are in a situation where a quadratic function with
a positive discriminant takes positive values at the ends of the interval:
$$
p(a_i^{-1})>0, \qquad   p(+\infty):=\lim_{y \to +\infty} p(y)=+\infty
$$
due to  $c_0>0$.
Therefore, the analysis used in the proof of Theorem~\ref{thm_sum_a_i<1/2}
is not applicable here: such an interval can contain either both roots of the equation
or none.

\smallskip

{\it  No root can belong to $(-\infty, 0)$}.
Suppose by contrary  $p(y)=0$ has roots $y_1,y_2\in (-\infty, 0)$.
Then $-\dfrac{c_1}{2c_0}=\dfrac{y_1+y_2}{2}<0$ despite the facts
$c_0>0$ and $c_1<0$  established
in Lemma~\ref{znaki_coeff} in the case $a_1+a_2+a_3>1/2$.

\smallskip

{\it  No root can belong to $(0, a_i^{-1})$}.
There is a similar situation at the ends of the interval:
$p(0)=c_2-2c_0>0$ by Lemma~\ref{znaki_coeff}
and $p(a_i^{-1})>0$ by~\eqref{sign_P2ofa_inverse}.
Introduce a function
$$
U(\mu):=-\dfrac{c_1}{2c_0}-a_i^{-1},
$$
where the sum $a_j+a_k$ is replaced by  $\mu=a_j+a_k$ in  $c_0$ and $c_1$.
Then $U(\mu)>0$ is equivalent to the inequality
$4\mu^2-(1-2a_i)<0$
admitting solutions $\mu\in \left(0,\overline{\mu}\right)$,
where $\overline{\mu}:=\dfrac{\sqrt{1-2a_i}}{2}$ is the positive root of~$U(\mu)=0$.
Since $\mu=1/2-a_i+\theta$ the following value of $\theta$
corresponds to $\overline{\mu}$:
$
\overline{\theta}_i=\dfrac{2a_i-1+\sqrt{1-2a_i}}{2}
$.
It follows then $U>0$ for $\theta\in \left(0,\overline{\theta}_i\right)$
and $U<0$  for $\theta\in \left(\overline{\theta}_i, 1\right)$.
It is not difficult to see that
$$
\overline{\theta}_i=
\dfrac{4a_i^2-1+\sqrt{\left(1-4a_i^2\right)(1+2a_i)}}{2(1+2a_i)}>
\dfrac{4a_i^2-1+\sqrt{1-4a_i^2}}{2(1+2a_i)}=\theta_i.
$$
Therefore, $U=-\dfrac{c_1}{2c_0}-a_i^{-1}>0$ for values
$\theta\in \left(0, \theta_i\right)\subset \left(0,\overline{\theta}_i\right)$,
which correspond to the case $D_i>0$.
It follows then $p(y)=0$ has no roots in $(0, a_i^{-1})$.
Supposing the contrary we would conclude $-\dfrac{c_1}{2c_0}=\dfrac{y_1+y_2}{2}<a_i^{-1}$ from
  $0<y_1<a_i^{-1}$ and $0<y_2<a_i^{-1}$ despite $U>0$.

\smallskip

Thus we showed that {\it both roots of $p(y)=0$
belong to}\/ $(a_i^{-1}, +\infty)$ in the case $\theta<\theta_i$.
Denote them by  $y_1$ and~$y_2$. Without loss of generality assume that  $y_1<y_2$.
By Lemma~\ref{lem_20_07_2024}
two different roots $t_1\in (0, m_i)$ and $t_2=t_1^{-1}\in (M_i, +\infty)$ of $h(t)=0$
correspond to $y_1$.
Analogously there are roots $t_3\in (0, m_i)$ and $t_4=t_3^{-1}\in (M_i, +\infty)$ of
$h(t)=0$ corresponding to $y_2$.
It is easy to see that
$$
t_3<t_1<m_i<M_i<t_2<t_4.
$$

Thus each of the intervals $(0,m_i)$ and $(M_i,+\infty)$ contains
exactly two roots of the polynomial~$h(t)$ which has a \glqq W``-shaped graph.
It follows then exactly
two points are expected  on each component of the curve $r_i$  at which
the vector field  ${\bold V}=(f_1,f_2,f_3)$ changes its direction
in relation to the domain $R$ (\glqq into $R$`` or \glqq from $R$``).
Consider the component~$r_{ik}$ only with $t\in (M_i, +\infty)$.
Then $M_i<t_2<t_4$ implies that
trajectories of~\eqref{three_equat} come into~$R$ through~$r_{ik}$
and never cross or touch $r_{ik}$ again  for all~$t>t_4$.
For the component $r_{ij}$ with  $t\in (0,m_i)$
a similar analysis can be carried out
by mirroring of $M_i<t_2<t_4$.

Thus in the case~$\theta<\theta_i$
trajectories of~\eqref{three_equat} can intersect the border curve~$r_i$ twice according to the scenario
\glqq towards~$R$~-- from~$R$~-- towards~$R$ again`` (see also Remark~\ref{rem_270824}).
A similar scenario will take place on  the border curves~$r_j$ or (and) $r_k$,
if $\theta<\theta_j$ or (and) $\theta< \theta_k$
returning trajectories to~$R$ even they ever left $R$ once.
Clearly if  $\theta\ge \theta_j$ or $\theta\ge \theta_k$ at $\theta<\theta_i$
then the curve $r_j$ or $r_k$ is locked for trajectories to leave $R$ as proved before.
After such an analysis we conclude that at least some metrics with positive Ricci curvature can preserve the positivity of the Ricci curvature. The second assertion in Theorem~\ref{thm_sum_a_i>1/2} is also proved.
Theorem~\ref{thm_sum_a_i>1/2} is proved.
\end{proof}

\begin{remark}\label{rem_270824}
Let us  comment the behavior of trajectories in more detail
 in the case $\theta<\theta_i$ in Theorem~\ref{thm_sum_a_i>1/2}. Consider the example of~$r_{ik}$.
As it was  noted in Remark~\ref{rem_240724}
we are actually interested in the \glqq useful`` part of the component $r_{ik}$
parameterized by $t\in [t_{ik}, +\infty)\subset (M_i,+\infty)$,
where $t_{ik}>M_i$ by Lemma~\ref{pairwise_inter_r}.
The dynamics of trajectories of~\eqref{three_equat} depends on which link of the chain
$M_i<t_2< t_4<+\infty$ the value  $t_{ik}$ will be situated.
In the case $\theta<\theta_i$ the following
situations may happen on the component $r_{ik}$ of~$r_i$:

{\it Case 1.  $t_{ik}\in [t_4, +\infty)$}.   Trajectories move towards~$R$ through
a part of  $r_{ik}$ which corresponds to $t\ge t_{ik}$
(respectively~$t> t_{ik}$ if $t_{ik}=t_4$).
No trajectory can leave $R$.

\smallskip
{\it Case  2.  $t_{ik}\in (t_2, t_4)$}.
Trajectories leave  $R$  for $t\in \left[t_{ik}, t_4\right)$ and
return back to  $R$ for $t>t_4$.

\smallskip
{\it Case 3.  $t_{ik}\in (M_i, t_2]$}.
Trajectories come into~$R$
for $t\in \left[t_{ik},t_2\right)$,
leave~$R$ for $t\in \left(t_2, t_4\right)$ and return back to~$R$
for~$t>t_4$.

The analysis of all possible outcomes shows that
even if the strong {\it Case}\/ 1  never can occur at $\theta<\theta_i$,
some trajectories of~\eqref{three_equat} must return back to $R$ and remain there
 for all $t>t_4$ in any case.
\end{remark}

\begin{remark}\label{rem_060824}
In the case $a_1+a_2+a_3> 1/2$
some metrics with mixed Ricci curvature can be transformed into metrics with $\operatorname{Ric}>0$, after then lose $\operatorname{Ric}>0$ and restore
$\operatorname{Ric}>0$ again.
This follows from the proof of Theorem~\ref{thm_sum_a_i>1/2} according to which
trajectories of~\eqref{three_equat} can attend~$R$ twice and then remain in $R$ forever.
\end{remark}

\begin{figure}[h!]
\centering
\includegraphics[width=0.9\textwidth]{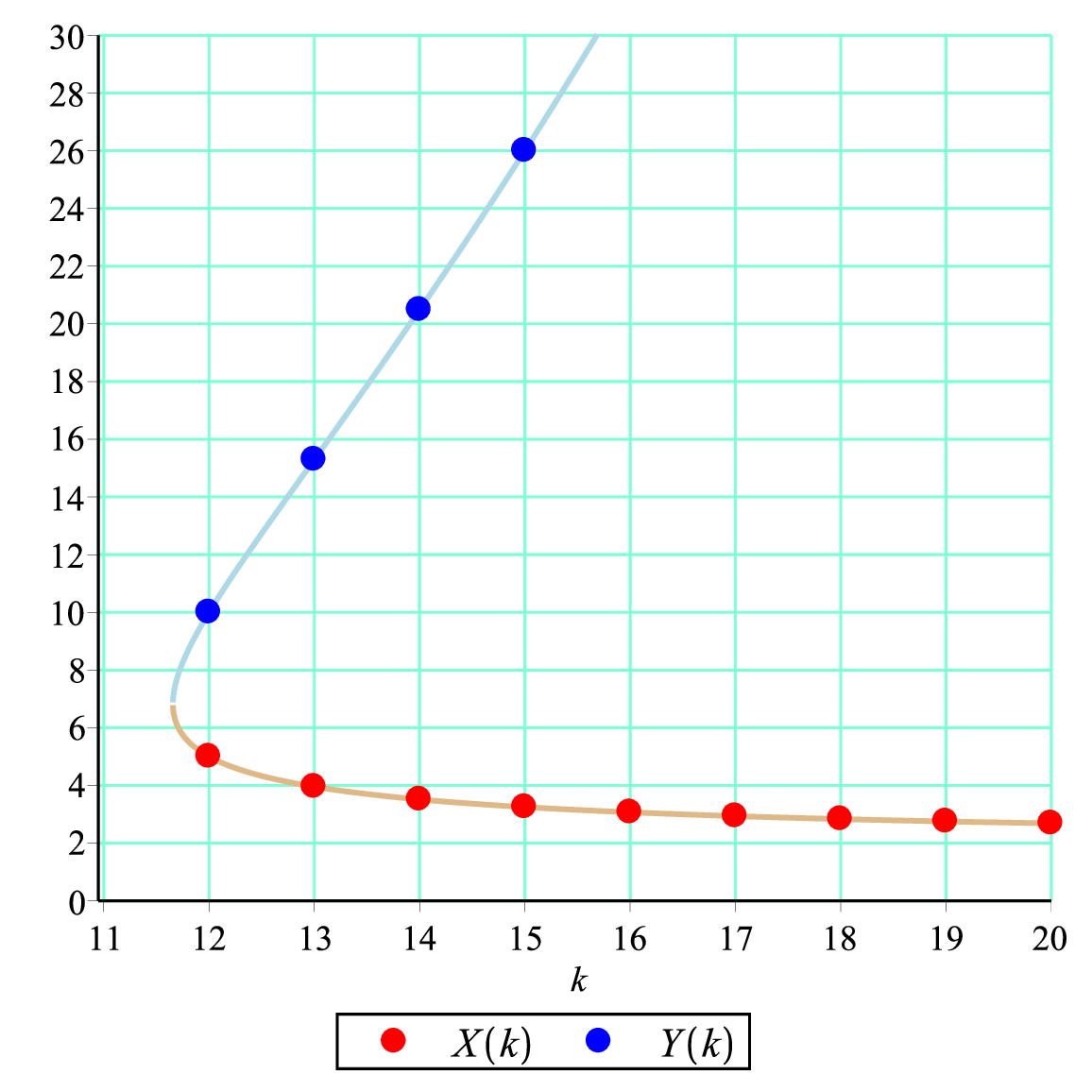}
\caption{The functions $X(k)$ and $Y(k)$}
\label{pic2}
\end{figure}

\begin{figure}[h!]
\centering
\includegraphics[width=0.9\textwidth]{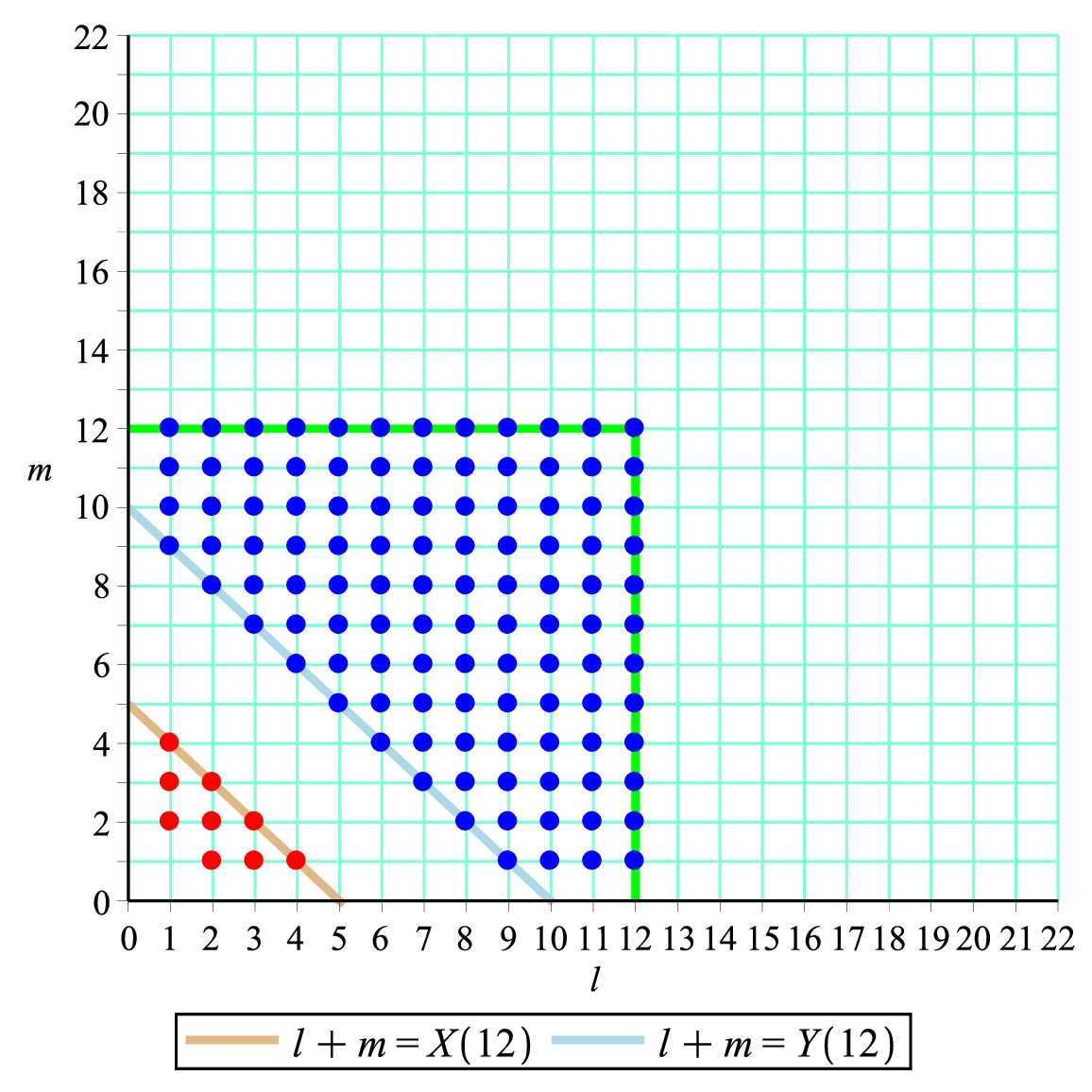}
\caption{All pairs $(l,m)$ satisfying one of the inequalities $2<l+m\le X(12)$ (red points)
or $l+m\ge Y(12)$ (blue points)}
\label{XY12}
\end{figure}

\begin{figure}[h!]
\centering
\includegraphics[width=0.9\textwidth]{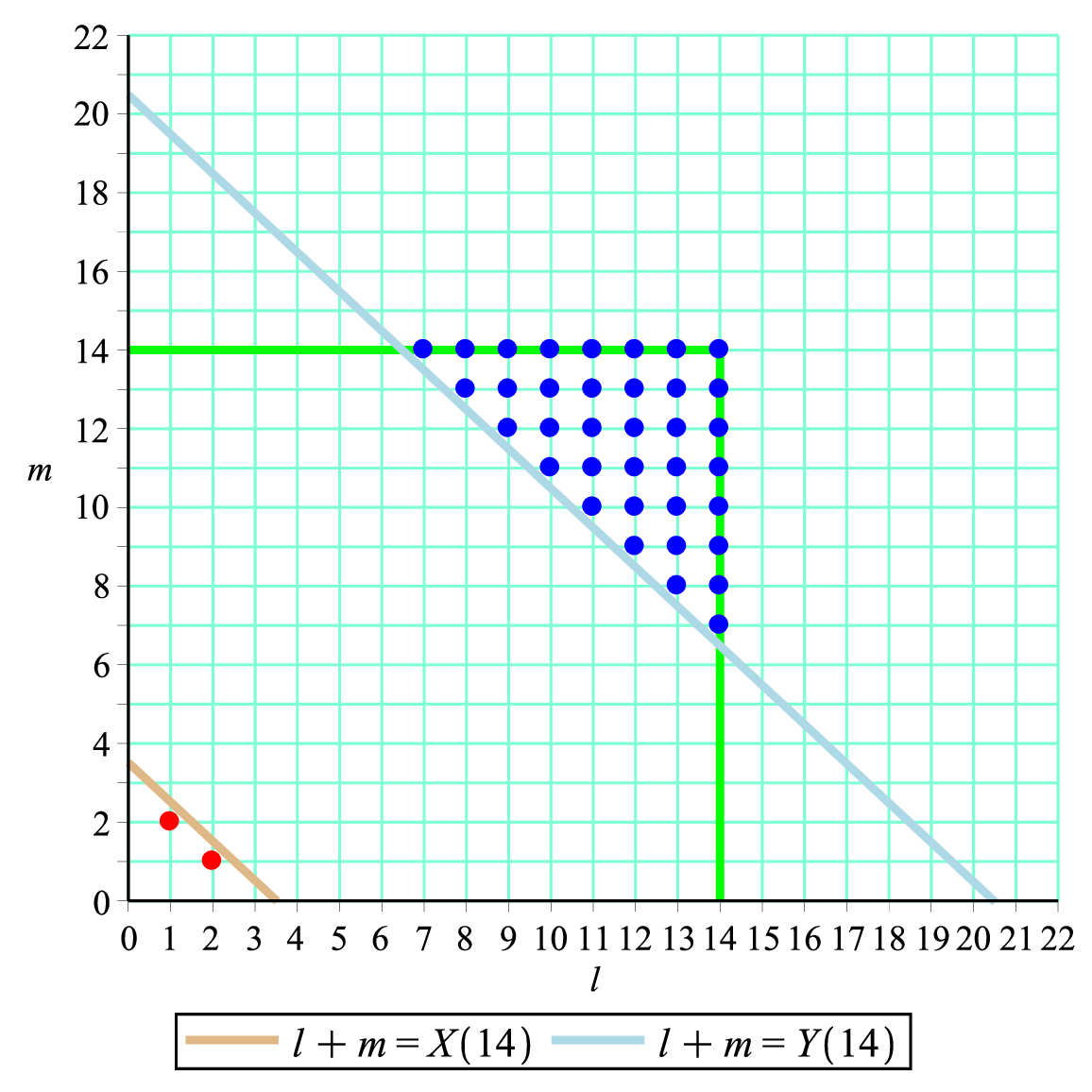}
\caption{All pairs $(l,m)$ satisfying one of the inequalities $2<l+m\le X(14)$ (red points)
or $l+m\ge Y(14)$ (blue points)}
\label{XY14}
\end{figure}

\bigskip

\begin{proof}[Proof of Theorem~\ref{thm_3}] Clearly   $\theta=a_1+a_2+a_3-\dfrac{1}{2}=\dfrac{1}{k+l+m-2}>0$ for  GWS {\bf 1}.

\smallskip

$(1)$
{\bf Case A}. {\it The condition $\max\{k,l,m\}\le 11$ holds}.
Without loss of generality consider the difference $\theta-\theta_1$ only.
Observe that $a_1=k\theta/2$ and hence
$$
\theta-\theta_1=\theta-a_1+\dfrac{1}{2}-\dfrac{1}{2}\sqrt{\dfrac{1-2a_1}{1+2a_1}}=
\theta-\frac{k\theta}{2}+\dfrac{1}{2}-\dfrac{1}{2}\sqrt{\dfrac{1-k\theta}{1+k\theta}}.
$$
It follows then  $1-k\theta\ge 0$ necessarily to be well defined.
Therefore  permissible values of  $\theta\in (0,1)$
must belong to a narrower interval  $(0, 1/k]\subseteq (0,1)$ for each natural number~$k$.
Under  such restrictions the inequality  $\theta-\theta_1\ge 0$ is equivalent to
the following system of inequalities
\begin{eqnarray*}
T(\theta):=k(k-2)^2\theta^2-(k^2-4)\theta+4\ge 0 \mbox{~~ and ~~}
0<\theta \le 1/k.
\end{eqnarray*}

Obviously $T(\theta)=4>0$ is trivial at $k=2$.
Assume that $k\ne 2$.
The discriminant
$$
D(k):=(k^2-12k+4)(k-2)^2=\left(k-\big(6-4\sqrt{2}\big)\right)
(k-2)^2\left(k-\big(6+4\sqrt{2}\big)\right)
$$
of the polynomial $T(\theta)$, where $6-4\sqrt{2}\approx 0.3431$ and $6+4\sqrt{2}\approx 11.6569$,
is negative for all $k\in \{1, 3, 4, \dots, 11\}$,
and hence  for $k\in \{1,2, \dots, 11\}$
we have $T(\theta)>0$ that means  $\theta>\theta_1$.
Then by Theorem~\ref{thm_sum_a_i>1/2}
no point on the curve~$r_1$ can emit a trajectory towards the exterior of $R$.
The same will hold for the curves $r_2$ and~$r_3$ if $\max\{k,l,m\}\le 11$
that imply $\theta\ge \max\{\theta_1, \theta_2, \theta_3\}$.
Example~\ref{ex_sum>1/2_x} illustrates the case $\max\{k,l,m\}\le 11$.

\smallskip
{\bf Case B}. {\it The condition $\max\{k,l,m\}\le 11$ fails}.
Recall our assumption $k\ge \max\{l,m\}$ due to symmetry.
Suppose that  $k\ge 12$. For such values of~$k$ the discriminant  $D(k)$ is positive.
However it does not mean that  $T(\theta)\ge 0$.
It may be $T(\theta)< 0$.
Firstly we show that all roots of $T$ belong to $(0, 1/k]$
for such values of  $k$.
Since there is no double roots, we can apply Sturm's method.
For this aim we construct on $[0,1/k]$  the Sturm's polynomial system of $T(\theta)$:
\begin{eqnarray*}
f_0(\theta)&:=&T(\theta), \\
f_1(\theta)&:=&\frac{dT}{d\theta}=2k(k-2)^2\theta-(k^2-4), \\
f_2(\theta)&:=&-\operatorname{rem}(f_0,f_1)=\frac{D(k)}{4\operatorname{lc}(T)}=\frac{k^2-12k+4}{4k(k-2)},
\end{eqnarray*}
where the symbol $\operatorname{rem}(f_0,f_1)$  means the remainder of dividing  $f_0$ by $f_1$ and  $\operatorname{lc}$ means the leading coefficient of a polynomial.
At the point $\theta=0$
the number of sign changes in the Sturm system equals~$2$ since  $k\ge 12$:
$$
f_0(0)=4>0, \qquad f_1(0)=4-k^2<0, \qquad f_2(0)>0.
$$

At $\theta=1/k$   the corresponding number is~$0$:
$$
f_0(1/k)=8/k>0, \qquad  f_1(1/k)=(k-2)(k-6)>0, \qquad  f_2(1/k)>0.
$$

Therefore  both  roots  of the polynomial
$T(\theta)$ belong to  the interval $(0,1/k)$ in cases $k\ge 12$.
Denote them $\Theta^{(1)}=\Theta^{(1)}(k)$ and
$\Theta^{(2)}=\Theta^{(2)}(k)$.
Actually we are able to find them explicitly:
$\Theta^{(1)}(k)=\dfrac{k+2-\sqrt{k^2-12k+4}}{2k(k-2)}$
and
$\Theta^{(2)}(k)=\dfrac{k+2+\sqrt{k^2-12k+4}}{2k(k-2)}$.
Note that
$$
0<\Theta^{(1)}(k)<\Theta^{(2)}(k)<\frac{1}{k}.
$$
Now the sign  of $\theta-\theta_1$ will depend on the fact
what link in the chain of these inequalities
 will contain~$\theta$.

\smallskip
{\it Subcase B1}.   $\theta\le \Theta^{(1)}$ or $\Theta^{(2)}\le \theta<1/k$.
Then  $\theta\ge \theta_1$.
According to Theorem~\ref{thm_sum_a_i>1/2}
no trajectory of~\eqref{three_equat} can leave~$R$ via the curve~$r_1$.
Examples~\ref{ex_sum>1/2_7} and~\ref{ex_sum>1/2_8} are relevant to {\it Subcase~B1}.

\smallskip
{\it Subcase B2}.   $\Theta^{(1)}<\theta<\Theta^{(2)}$ is equivalent to $\theta<\theta_1$.
By Theorem~\ref{thm_sum_a_i>1/2}  there exists a trajectory
of~\eqref{three_equat} that  can leave~$R$ but return back to~$R$.
Example~\ref{ex_sum>1/2_6} is relevant to {\it Subcase~B2}.

For further analysis the inequalities should be rewritten  in more convenient form in
{\it Subcases~B1 and~B2}\/
in terms of the parameters~$k,l$ and $m$.
For instance, {\it Subcase~B1} can be described by the union of solutions
of the inequalities $2<l+m\le X(k)$ and $l+m\ge Y(k)$,
where
 $$
X(k):=\frac{2k(k-2)}{k+2+\sqrt{k^2-12k+4}}-k+2, \qquad Y(k):=\frac{2k(k-2)}{k+2-\sqrt{k^2-12k+4}}-k+2
$$
are  functions  appeared in the text of Theorem~\ref{thm_3} and Table~2.

Imagine  $X(k)$ and $Y(k)$ as  functions of real independent variable $k$ to apply methods of calculus (see Fig.~\ref{pic2} for their graphs).
It is easy to establish $X(k)>2$ for all $k\ge 12$
(observe that $2$ is a horizontal asymptote since $\lim_{k \to \infty} X(k)/k=0$).
Moreover, $X(k)$ decreases  and satisfies $2<X(k)\le 5$ for $k\ge 12$
since $X(12)=5$ and
$$
X'(k)=-16\frac{k^2-5k+2-(k-1)\sqrt{k^2-12k+4}}{(k+2+\sqrt{k^2-12k+4})^2\sqrt{k^2-12k+4}}<0
$$
due to $(k^2-5k+2)^2-(k-1)^2(k^2-12k+4)=4k^3>0$.

As for $Y(k)$ it increases since
$$
Y'(k)=16\frac{k^2-5k+2+(k-1)\sqrt{k^2-12k+4}}{(k+2-\sqrt{k^2-12k+4})^2\sqrt{k^2-12k+4}}>0
$$
for all $k\ge 12$.
In addition,  $Y(k)\ge 10$ for all $k\ge 12$ with $Y(12)=10$.

\smallskip

{\it We claim that  only the values $k\in \{12, \dots,  16\}$ can
satisfy}\/ $\theta\ge \max\{\theta_1, \theta_2, \theta_3\}$  in~{\it Subcase~B1}.
Indeed since~$X(k)$ decreases and  $X(17)\approx 2.93$ with  the critical case $l+m\le 2$
it looses any sense to consider $2<l+m\le X(k)$ for $k\ge 17$: we would obtain $l=m=1$ for all $k\ge 17$ contradicting $l+m>2$.
Analogously due to increasing of  $Y(k)$  the numbers $l$ and $m$ in the sum $l+m\ge Y(k)$ will be out
of the range $\max\{l,m\}\le 11$ in a short time.
Then the inequalities
$$
\widetilde{Y}(k)\le l+m+k-2
$$
are required to be true for every permutation of the triple~$(k,l,m)$
in order
to keep the condition $\theta\ge \max\{\theta_1, \theta_2, \theta_3\}$
for~$k,l,m\ge 12$,
where $\widetilde{Y}(k):=\dfrac{2k(k-2)}{k+2-\sqrt{k^2-12k+4}}$.
It follows than  permissible values of  $l,m,k$  can not be large than~$16$.
Indeed   $k\ge \max\{l,m\}$ by assumption.
Since the function $\widetilde{Y}(k)=Y(k)+(k-2)$ increases for all $k\ge 12$
we obtain the following inequality
$$
\max\left\{\widetilde{Y}(k),\widetilde{Y}(l),\widetilde{Y}(m)\right\}= \widetilde{Y}(k)\le 3k-2.
$$
Introduce the function
$$
Z(k):=\widetilde{Y}(k)- (3k-2)=\frac{k^2+8k-4-(3k-2)\sqrt{k^2-12k+4}}{k+2-\sqrt{k^2-12k+4}}.
$$
Its  derivative $Z'(k)$ has the numerator
$k^3-6k^2-40k+16-(k^2-8k+8)\sqrt{k^2-12k+4}>0$
due to
$(k^3-6k^2-40k+16)^2-(k^2-8k+8)^2(k^2-12k+4)=16k^2(k^3-20k^2+104k-32)>0$
for all $k\ge 12$ (the unique real root of $k^3-20k^2+104k-32$ is less than $1$).
Therefore $Z(k)$ increases for $k\ge 12$.
Note also that $Z(16)\approx -0.0691<0$ and $Z(17)\approx 4.3137>0$
implying that the inequality  $\widetilde{Y}(k)\le 3k-2$ can not be satisfied if $k\ge 17$.

Collecting all values of $(k,l,m)$  which provide the condition $\theta\ge \max\{\theta_1, \theta_2, \theta_3\}$ of Theorem~\ref{thm_sum_a_i>1/2} in  both {\bf Cases A} and {\bf B}
we conclude that on GWS~{\bf 1} all metrics with $\operatorname{Ric}>0$ maintain $\operatorname{Ric}>0$
under NRF~\eqref{ricciflow}
if $(k,l,m)$ satisfies either $\max\{k,l,m\}\le 11$  or one of  the inequalities
$2<l+m\le X(k)$ or $l+m\ge Y(k)$ at $12\le k\le 16$.

\smallskip

$(2)$ The  system of inequalities
$X(k)<l+m<Y(k)$ and $k\ge 12$ corresponding to~{\it Subcase~B2}\/
is equivalent to the negation of~$\theta\ge \max\{\theta_1, \theta_2, \theta_3\}$
and guaranties the existence of some metrics preserving~$\operatorname{Ric}>0$
by the same Theorem~\ref{thm_sum_a_i>1/2}.

\smallskip

$(3)$
Let us estimate  the number of GWS~{\bf 1} on which
all metrics with $\operatorname{Ric}>0$ maintain $\operatorname{Ric}>0$.

It is obvious that there exists a finite number of GWS~{\bf 1}  which satisfy
$\max\{k,l,m\}\le 11$.
As we saw above  the inequality  $2<l+m\le X(k)$  corresponding to~{\it Subcase B1}\/ can  provide only a finite set of solutions in~$(k,l,m)$ due to decreasing of~$X(k)$.
For instance, $X(12)=5$ implies $l+m\le 5$ etc.
Analogously, another inequality  $Y(k)\le l+m$ corresponding to {\it Subcase B1}\/   admits a finite number of solutions $(k,l,m)$ as well
according to  assumed $\max\{l,m\}\le k$ and established  $12\le k\le 16$.
Therefore the number of GWS~{\bf 1}
on which any metric with $\operatorname{Ric}>0$ remains~$\operatorname{Ric}>0$
must be finite.

Obviously, there is no restriction to $k$
in the inequalities~$X(k)<l+m<Y(k)$ besides supposed $k\ge 12$ in {\bf Case B}.
Indeed they admit infinitely many solutions in $(k,l,m)$
since the range $(X(k),Y(k))$  expands as $k$ grows
providing an increasing number of pairs $(l,m)$ which can satisfy $X(k)<l+m<Y(k)$
for every fixed $k\ge 12$.
Therefore there are  infinitely (countably) many GWS~{\bf 1} on which at least some metrics can keep~$\operatorname{Ric}>0$.
Theorem~\ref{thm_3} is proved.
\end{proof}

\renewcommand{\arraystretch}{1.6}
\begin{table}[h]\label{tabXY}
{\bf Table 3}.  $X(k)$ and $Y(k)$ at $k\in \{12,\dots,17\}$
\begin{center}
\begin{tabular}{c|r|r|r|r|r|rr}
\hline
\hline
$k$&$12$& $13$ & $14$ & $15$ & $16$ & $17$ \\\hline
$X(k)$&$5$& $3.96$  &$3.51$  & $3.25$&  $3.07$&$2.94$\\
$Y(k)$&$10$& $15.29$     & $20.49$  & $26$&  $31.93$&$38.31$\\
\hline\hline
\end{tabular}
\end{center}
\end{table}

\begin{remark}\label{rem_140824}
All $\operatorname{GWS}$~{\bf 1}
on which any metric with $\operatorname{Ric}>0$ remains~$\operatorname{Ric}>0$
can easily be found.
Using values of $X(k)$ and~$Y(k)$ given in Table~3
all triples $(k, l, m)$ are found in Tables~4 and~5 which
satisfy  $2<l+m\le X(k)$ or $l+m\ge Y(k)$
at fixed $k\in \{12,\dots 16\}$ and $k\ge \max\{l,m\}$,
where $N=[Y(k)]$ if $Y(k)$ is integer, else $N=[Y(k)]+1$
(symbol $[x]$ means the integral part of $x$),
analogously  $\nu(l):=10-l$ if  $10-l>0$ else $\nu(l):=1$.
See also Fig.~\ref{XY12} and~\ref{XY14}.
The single pair $(l,m)=(16,16)$ corresponds to $k=16$.
No permissible pair $(l,m)$ at $k\ge 17$.
\end{remark}

\renewcommand{\arraystretch}{1.6}
\begin{table}[h]\label{tabX}
{\bf Table 4}. All triples $(k,l,m)$ which satisfy $2<l+m\le X(k)$
\begin{center}
\begin{tabular}{c|c|c|c|c|c|cc}
\hline
\hline
$k$&$12$& $13$ & $14$ & $15$ & $16$ & $\ge 17$ \\\hline
$(l,m)$&$(4,1), (1,4), (3,2), (2,3)$& $(2,1)$     & $(2,1)$  & $(2,1)$&  $(2,1)$&$-$   \\
&$(3,1), (1,3), (2,2)$& $(1,2)$  &$(1,2)$& $(1,2)$&$(1,2)$ &\\
&$(2,1), (1,2)$&           &&& &\\
\hline\hline
\end{tabular}
\end{center}
\end{table}

\renewcommand{\arraystretch}{1.6}
\begin{table}[h]\label{tabY}
{\bf Table 5}. All triples $(k,l,m)$ which satisfy $l+m\ge Y(k)$
\begin{center}
\begin{tabular}{c|c|c|c|cc}
\hline
\hline
$k$&$12$& 13, 14, 15 & $16$ & $\ge 17$ \\\hline
$(l,m)$& $1 \le l \le 12$  & $N-k\le l \le k$ &  $(16,16)$&$-$   \\
&$\nu(l) \le m\le  12$ &$N-l\le m \le  k$&  &   \\
\hline\hline
\end{tabular}
\end{center}
\end{table}

\bigskip

\begin{proof}[Proof of Corollary~\ref{corol_1}]
Recall that we are using the list of generalized Wallach spaces
given in Table~1 in accordance with~\cite{Nikonorov4}.

\smallskip

$(1)$  GWS {\bf 2, 5, 7}.
The proof follows from Theorem~\ref{thm_sum_a_i<1/2}
since  $a_1+a_2+a_3=1/2$ for spaces {\bf 2, 5, 7}.
An illustration is given in Example~\ref{ex_sum_a_i<1/2}.

\smallskip

$(2)$ GWS {\bf 3, 15}.
Clearly $\theta=a_1+a_2+a_3-\dfrac{1}{2}=-\dfrac{1}{2(k+l+m+1)}<0$  for
 GWS~{\bf 3}.
Analogously $\theta=-1/6<0$ for GWS~{\bf 15}.
The proof follows from  Theorem~\ref{thm_sum_a_i<1/2}.

\smallskip

$(3)$ GWS {\bf 4, 6, 8, 9, 10, 11, 12, 13, 14}.
For all them $\theta>0$. Desired conclusions follow by  checking the conditions
$\max\left\{\theta_1, \theta_2, \theta_3\right\}<\theta<1$ directly.
We demonstrate the proof only for GWS~{\bf 4} which depend
on parameters.
It is  easy to show that  $\theta=a_1+a_2+a_3-\dfrac{1}{2}=\dfrac{1}{4}$  is greater than
each of
${\theta}_1=\frac{2l\sqrt{(3l+1)(l-1)}-3l^2+2l+1}{4l(3l+1)}$,
${\theta}_2=\frac{2l\sqrt{(l+1)(3l-1)}-3l^2-2l+1}{4l(3l-1)}$ and
${\theta}_3=\frac{\sqrt{3}}{6}-\frac{1}{4}$
for all $l=2,3,\dots$.
Corollary~\ref{corol_1} is proved.
\end{proof}

\section{Planar illustrations of results}

\begin{figure}[h]
\centering
\includegraphics[width=0.9\textwidth]{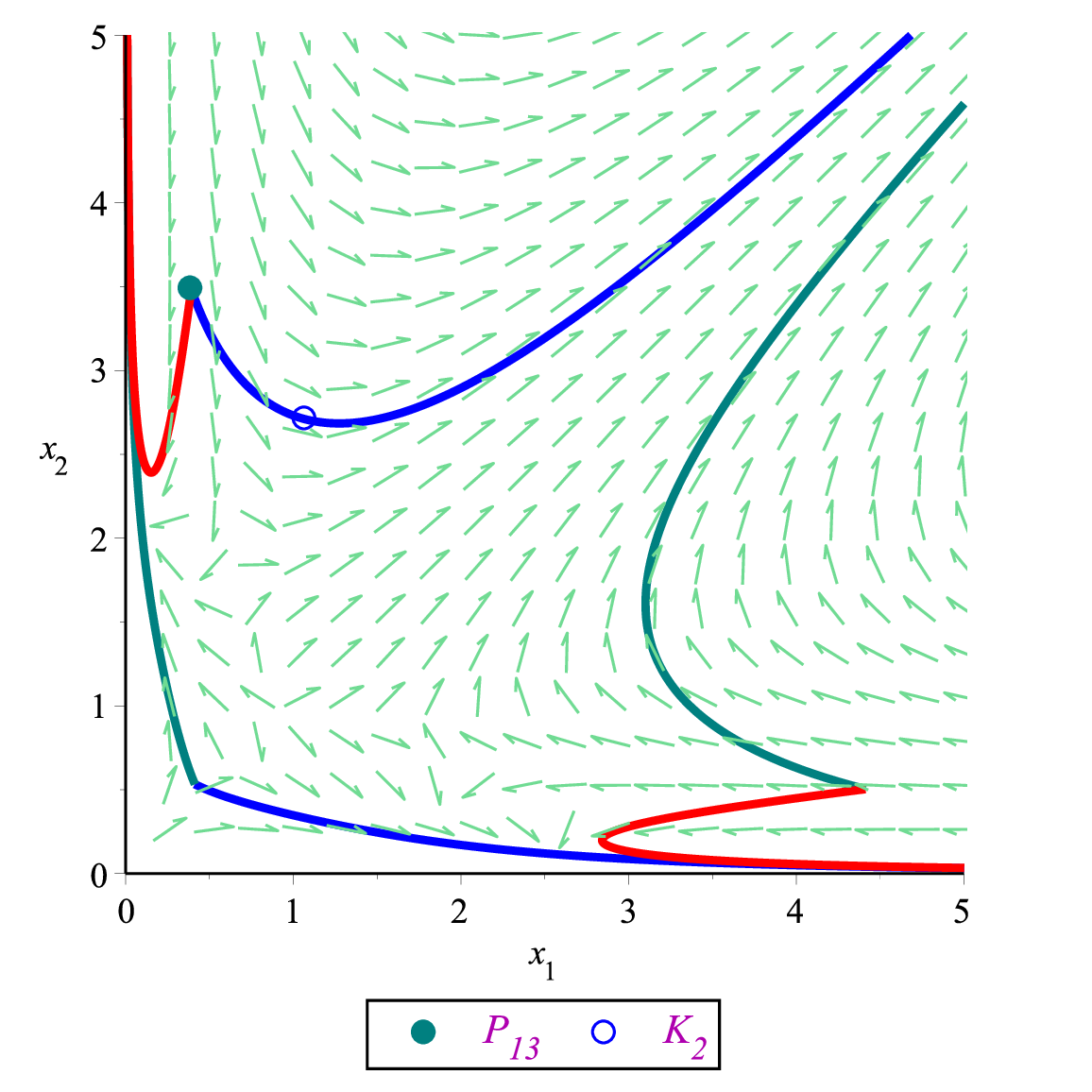}
\caption{GWS~{\bf  3} with $k=5$, $l=4$, $m=3$:
the phase portrait of~\eqref{two_equat} and the projections of~$r_1,r_2,r_3$ onto the plane $x_3=0$}
\label{pic3}
\end{figure}

\begin{figure}[h]
\centering
\includegraphics[width=0.9\textwidth]{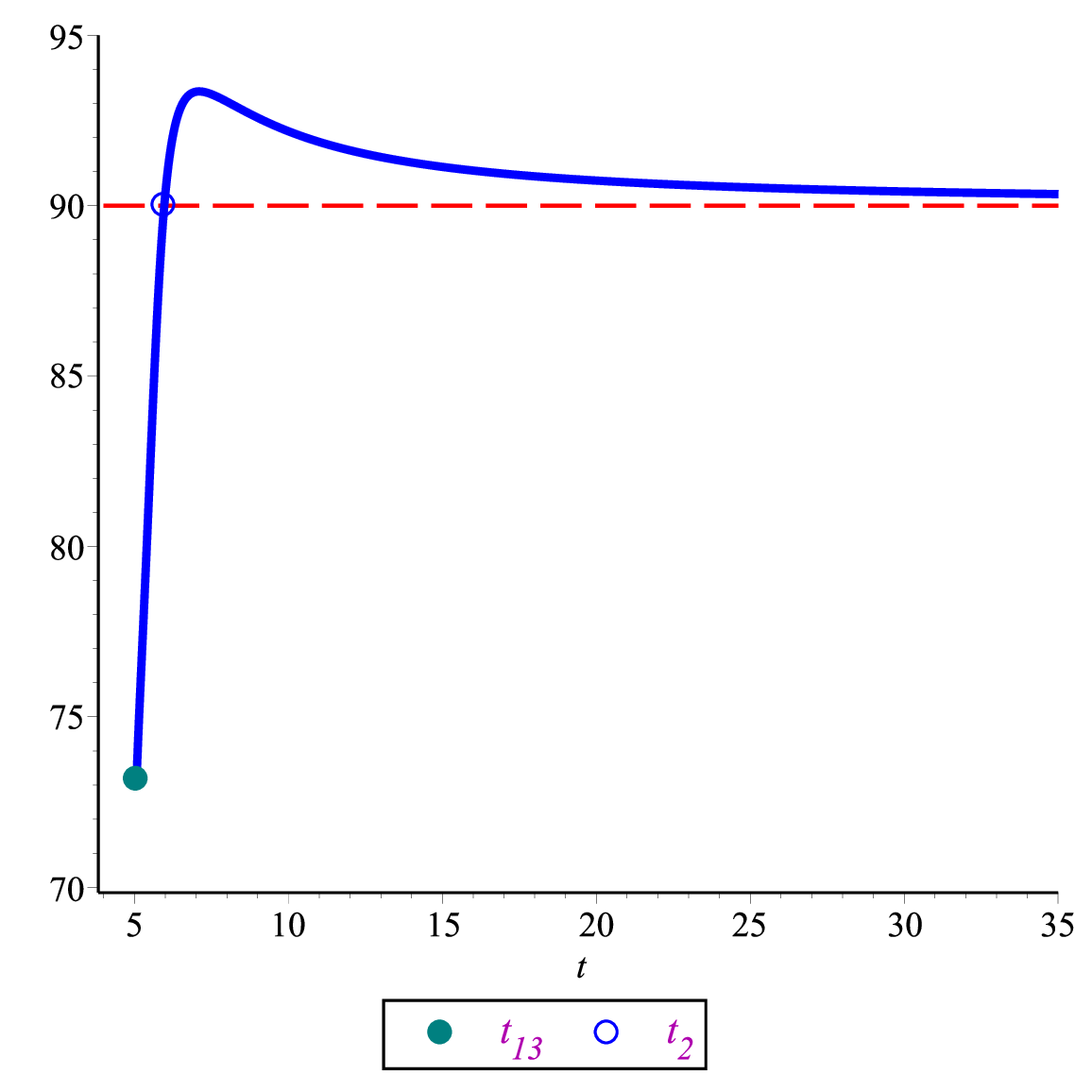}
\caption{GWS~{\bf  3} with $k=5$, $l=4$, $m=3$: the function~$\alpha(t)$ on the component~$r_{13}$}
\label{pic4}
\end{figure}

\subsection{The case $a_1+a_2+a_3\le 1/2$}

\begin{example}\label{ex_sum_a_i<1/2}
$\operatorname{Ric}>0$ is not preserved on every $\operatorname{GWS}$~{\bf  3}.
Take for instance $(k,l,m)=(5, 4, 3)$.
Then $a_1=5/26\approx 0.1923$, $a_2=2/13\approx 0.1538$ and $a_3=3/26\approx 0.1154$ with $a_1+a_2+a_3<1/2$.
Results are depicted in Fig.~\ref{pic3} and Fig.~\ref{pic4}.
\end{example}

In Example~\ref{ex_sum_a_i<1/2} we demonstrate all evaluations in detail.
In sequel we will restrict ourselves to concise comments.
The curve~$r_1=\Sigma\cap \Lambda_1$ defined by the system
of equations $5\left(x_1^2-x_2^2-x_3^2\right)+26x_2x_3=0$ and
$x_1^{1/5}\,x_2^{1/4}\,x_3^{1/3}=1$
has the following parametric representation according to Lemma~\ref{param_r_i}:
$x_1=\phi_1(t)=\frac{\psi^{35/94}}{t^{15/47}}$,
$x_2=\phi_2(t)=\frac{t^{32/47}}{\psi^{6/47}}$ and
$x_3=\phi_3(t)=\frac{t^{-15/47}}{\psi^{6/47}}$,
where $\psi:=t^2-26t/5+1$, $t\in (0,m_1)\cup (M_1,+\infty)$, $m_1=1/5$, $M_1=5$.

Choose the component~$r_{13}$ corresponding to  $t\in (5,+\infty)$.
Coordinates of the point~$P_{13}=r_{13}\cap r_{31}$  can be found using formulas~\eqref{alpha_beta} and~\eqref{Pij}:
$x_1=\gamma q^0\approx 0.3916$, $x_2=q^0\approx 3.4857$, $x_3=\delta q^0\approx 17.4285$,
where
$\gamma=\Phi(a_1,a_3)=\frac{-27+9\sqrt{10}}{13}\approx 0.1123$,\;
$\delta=\Psi(a_1,a_3)=\frac{50-15\sqrt{10}}{13} \approx 0.1974$, \;
$q^0=\gamma^{-\omega a_1^{-1}}\delta^{-\omega a_3^{-1}}=
\gamma^{-12/47}\delta^{-15/47}\approx 3.4857$.
Due to formula~\eqref{t_ij_a}   the following value
of the parameter~$t$ corresponds to the point~$P_{13}$:
 $t=t_{13}=\delta^{-1}=\frac{13}{50-15\sqrt{10}} \approx 5.0666$.

The polynomial $h(t)$ in~\eqref{Hgoi} takes the form
$h(t)= -63375t^4-370500t^3+4529574t^2-370500t-63375$.
Respectively
$p(y) = -63375y^2-370500y+4656324$
has a unique positive root
$y_0=-\frac{38}{13}+\frac{192}{325}\sqrt{235}\approx 6.1332>a_1^{-1}=5.2$.
The corresponding roots of $h(t)$ are the following:
$t_1\approx 0.1676<m_1=0.2$ and $t_2\approx 5.9656>M_1=5$.

The vector field ${\bold V}$ changes its direction at the unique
point~$K_2$ with coordinates
$x_1=\phi_1(t_2)\approx 1.0717$,
$x_2=\phi_2(t_2)\approx 2.7097$,
$x_3=\phi_3(t_2)\approx 0.4542$.

For $P_{13}$ we have $M_1 <t_{13}<t_2$.
Therefore $h(t)>0$  on~$r_{13}$ if  $t\in [t_{13},t_2)$.
This means that trajectories originated in the exterior of~$R$ can reach~$R$
across the arc of the curve~$r_{13}$ with endpoints at $P_{13}$ and $K_2$.
In sequel the mentioned incoming trajectories
and trajectories  originated in~$R$,
will leave~$R$ at once through an unbounded part of the curve~$r_{13}$,
which extends from $K_2$ to infinity
as the image of the interval $(t_2, +\infty)$.
Such a conclusion is confirmed by the values of the limits in~\eqref{parcurlims} as well:
$\lim\limits_{t\to M_1+} x_1=0$,
$\lim\limits_{t\to M_1+} x_2= \lim\limits_{t\to M_1+} x_3 = +\infty$,
$\lim\limits_{t\to +\infty} x_1=\lim\limits_{t\to +\infty} x_2 = +\infty$,
$\lim\limits_{t\to +\infty} x_3 =0$.

Using coordinates of the normal found in Lemma~\ref{normal_inside_gen},
we can analyze values of the angle
$$
\alpha(t):=\frac{180}{\pi}\arccos\, \frac{({\bold V}, \nabla \lambda_1)}{\|{\bold V}\|\,\|\nabla \lambda_1\|}
$$
between ${\bold V}=(f,g,h)$ and $\lambda_1$ on every point of the curve $r_{13}$, in degrees.
As calculations show  $73.16^{\circ}<\alpha(t)< 90^{\circ}$  for the incoming flow at $t\in (t_{13}, t_2)$
and $90^{\circ}<\alpha(t)<93.36^{\circ}$ for the outgoing flow at  $t>t_2$.
Therefore, trajectories  leave $R$ under small angles not exceeding~$4^{\circ}$.

\begin{example}\label{ex_sum_a_i=1/2}
$\operatorname{Ric}>0$ is not preserved on every  $\operatorname{GWS}$~{\bf  2}.
Take for instance $k=5$, $l=4$ and $m=3$.
Then $a_1=5/24\approx 0.2083$, $a_2=4/24\approx 0.1667$ and $a_3=3/24=0.125$ with $a_1+a_2+a_3=1/2$.
\end{example}

All necessary evaluations can  be repeated in the same order as in Example~\ref{ex_sum_a_i<1/2}.
Observe that $71.62<\alpha(t)\le 90$ for incoming trajectories
at
$4.6533\approx\frac{64}{5(45-\sqrt{595}\sqrt{3})}=t_{13}\le t\le t_2=\frac{3713+17\sqrt{42721}}{1200}\approx 6.0223$
and $90<\alpha(t)<91.43$ for outgoing trajectories  at  $t>t_2$.
Trajectories leave~$R$ under angles not greater than~$1.5^{\circ}$,
that means they are almost parallel to tangent vectors to~$r_{13}$.

\subsection{The case $a_1+a_2+a_3>1/2$}

\begin{figure}[h!]
\centering
\includegraphics[width=0.9\textwidth]{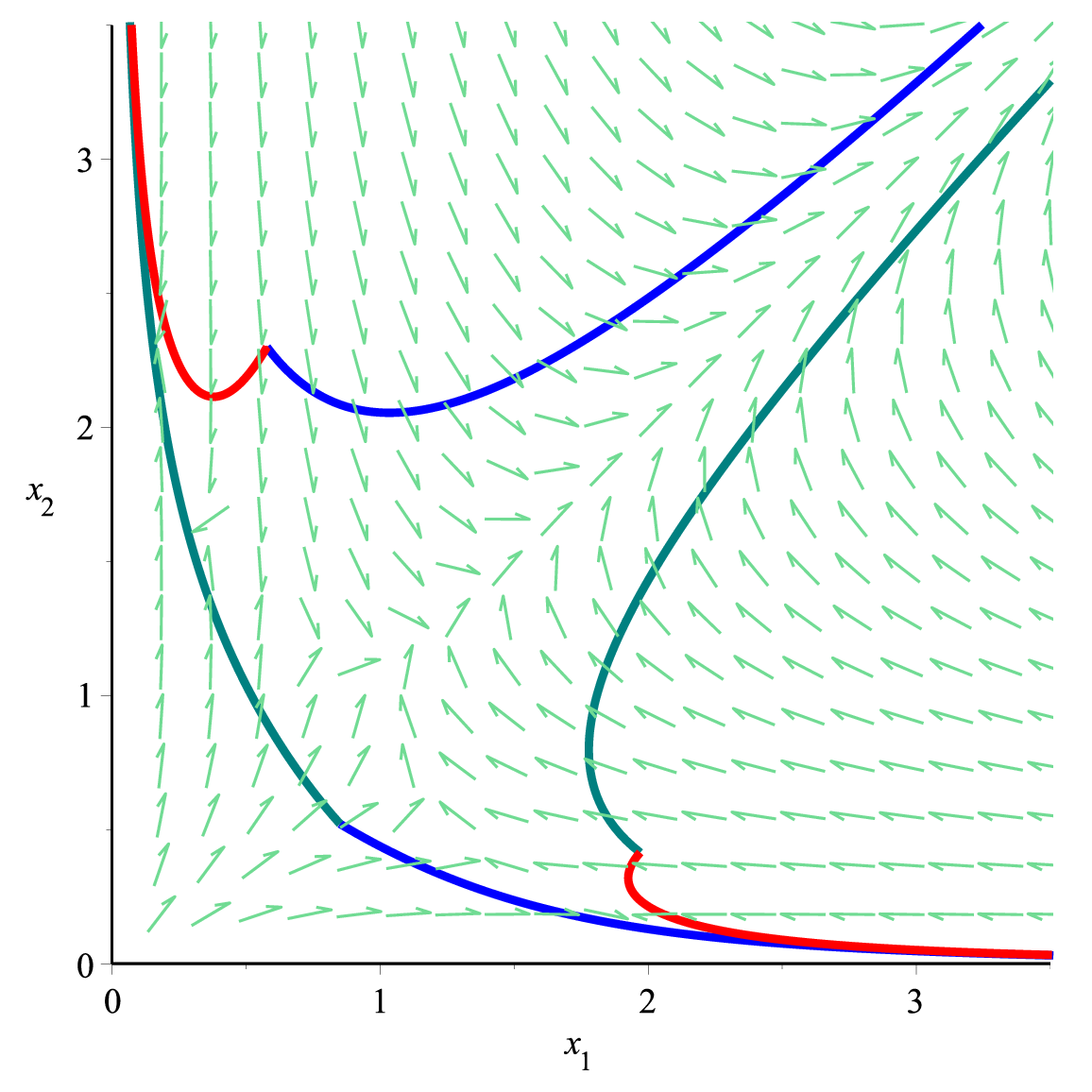}
\caption{The phase portrait of~\eqref{two_equat} at $a_1=5/14$, $a_2=3/7$, $a_3=\frac{1}{14}+\frac{\sqrt{6}}{12}$}
\label{pic9}
\end{figure}

\begin{example}\label{ex_sum>1/2_4}
The values $a_1=5/14 \approx 0.3571$,  $a_2=3/7\approx 0.4286$ and $a_3=\frac{1}{14}+\frac{\sqrt{6}}{12}\approx 0.2756$
satisfy $a_1+a_2+a_3>1/2$ and $\theta\ge \max\left\{\theta_1, \theta_2, \theta_3\right\}$.
Therefore trajectories of~\eqref{three_equat} are expected only incoming into $R$ by Theorem~\ref{thm_sum_a_i>1/2} (see also Fig.~\ref{pic9}).
\end{example}

Indeed $\theta=\frac{5}{14}+\frac{\sqrt{6}}{12}\approx 0.5613$,
$\theta_1=-\frac{1}{7}+\frac{\sqrt{6}}{12}\approx 0.0613$,
$\theta_2=-\frac{1}{14}+\frac{\sqrt{13}}{26}\approx 0.0672$,
$\theta_3=\frac{1}{14}\frac{-239+14\sqrt{6}+7\sqrt{1434-84\sqrt{6}}}{48+7\sqrt{6}}\approx 0.0445$.
It should be noted that in general such artificial values of the parameters $a_1,a_2,a_3$
need not correspond to certain generalized Wallach space  as noted in Introduction.

\begin{figure}[h!]
\centering
\includegraphics[width=0.9\textwidth]{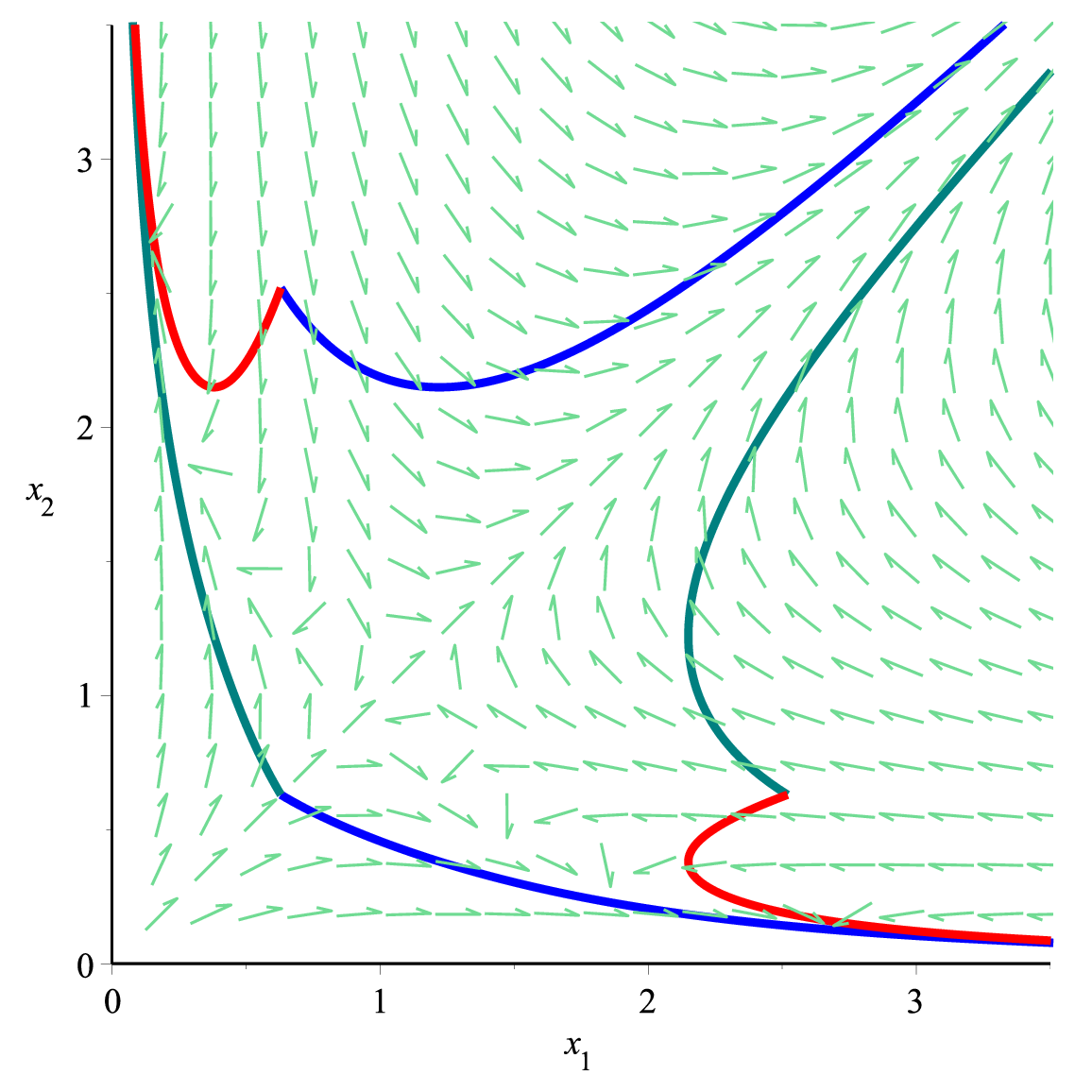}
\caption{GWS~{\bf  1} with $k=l=m=2$: the phase portrait of~\eqref{two_equat}}
\label{pic5}
\end{figure}

\begin{figure}[h!]
\centering
\includegraphics[width=0.9\textwidth]{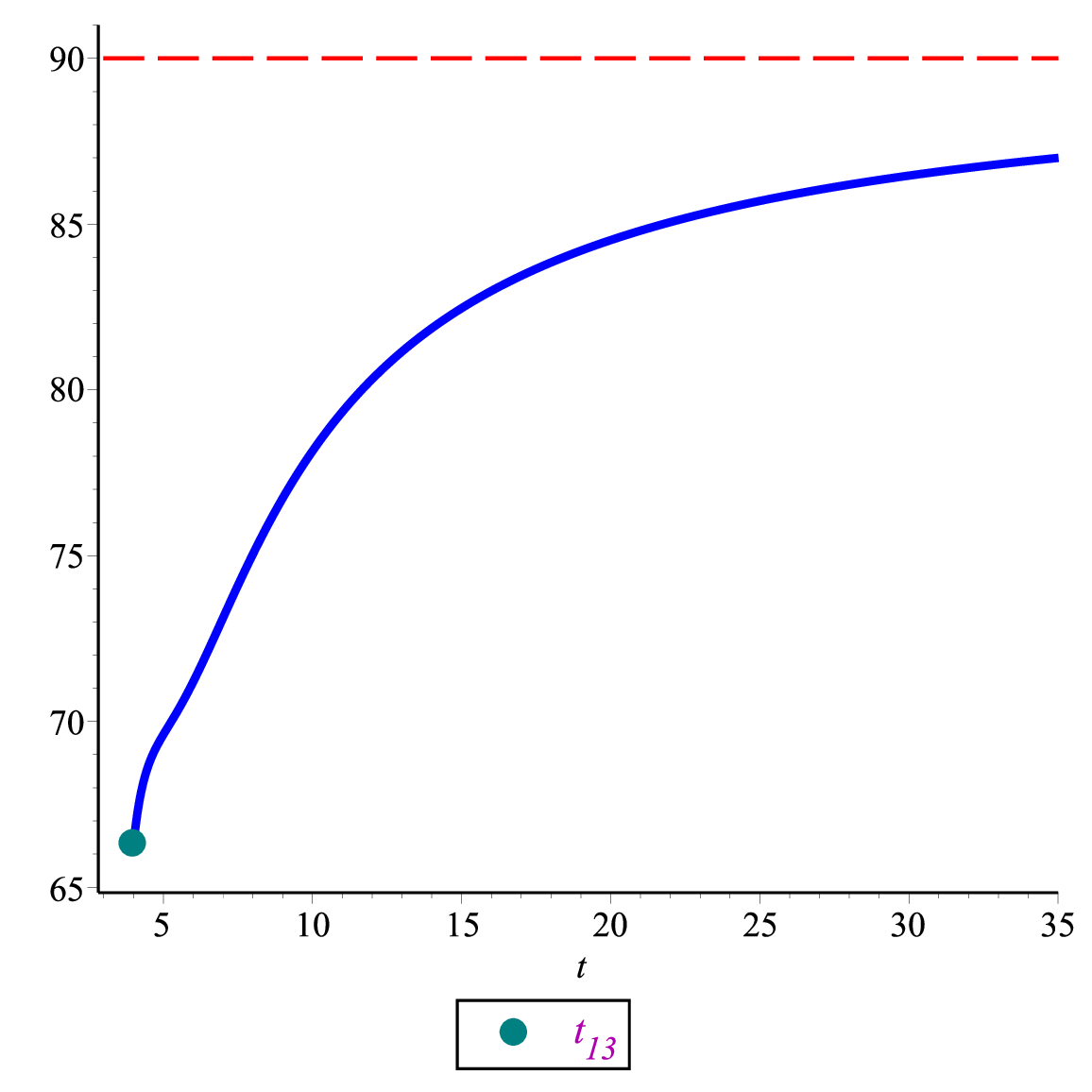}
\caption{GWS~{\bf  1} with $k=l=m=2$:
 the function~$\alpha(t)$ on the component $r_{13}$}
\label{pic6}
\end{figure}

\begin{example}\label{ex_sum>1/2_x}
On $\operatorname{GWS}$~{\bf  1} with  $k=l=m=2$ ($a_1=a_2=a_3=a=1/4$)
all metrics with $\operatorname{Ric}>0$ preserve $\operatorname{Ric}>0$.
Results are depicted in Fig.~\ref{pic5} and Fig.~\ref{pic6}.
\end{example}

Since $\max\{k,l,m\}\le 11$ the sufficient condition
$\theta\ge \max\left\{\theta_1, \theta_2, \theta_3\right\}$
is satisfied.
Indeed $\theta=0.25$ and
$\theta_1=\theta_2=\theta_3\approx 0.0387$.
Therefore we have only trajectories incoming into~$R$.
Note that  this example confirms also Theorem~\ref{Sect_Ricci_gen} (Theorem~3 in \cite{AN}),
because  $a=1/4>1/6$.

\begin{figure}[h!]
\centering
\includegraphics[width=0.9\textwidth]{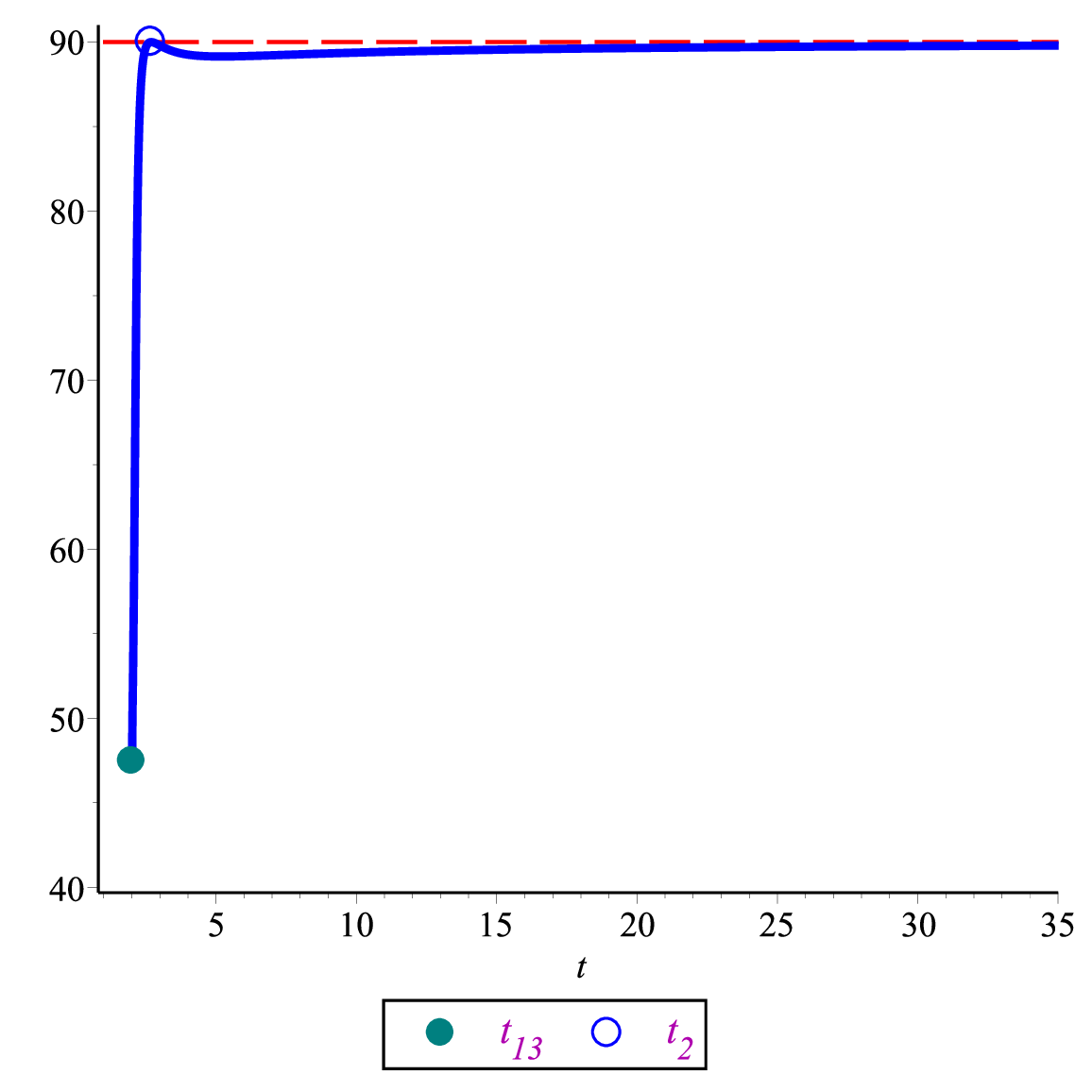}
\caption{GWS~{\bf  1} with $k=12$, $l=3$, $m=2$: the function $\alpha(t)$
on the component $r_{13}$}
\label{pic8}
\end{figure}

\begin{example}\label{ex_sum>1/2_7}
On $\operatorname{GWS}$~{\bf  1} with $k=12$, $l=3$, $m=2$ ($a_1=2/5$,
$a_2=1/10$, $a_3=1/15\approx 0.0667$)
 all metrics with $\operatorname{Ric}>0$  preserve $\operatorname{Ric}>0$.
\end{example}

The point $(3,2)$ lies on the straight line $l+m=X(12)=5$.
Therefore $\theta=\theta_1$  meaning that  trajectories  can only touch
each component of~$r_1$ at some unique point  of that component
remaining inside of~$R$ (see Fig.~\ref{pic8}).
Also trajectories of~\eqref{three_equat} can not leave $R$ via $r_2$ or~$r_3$ since
$\theta>\max\{\theta_2,\theta_3\}$ due to $\max\{l,m\}<11$.
Therefore they remain in~$R$.

In detail, the polynomial $p(y)=(16y-49)^2$ corresponds to~$r_1$
that admits the single root $y_0=49/16=3.0625$ of multiplicity~$2$.
Then the corresponding  polynomial
$h(t)=\left(16t^2-49t+16\right)^2$
admits two roots $t_1=\frac{49-9\sqrt{17}}{32}\approx 0.3716$ and
$t_2=\frac{49+9\sqrt{17}}{32}\approx 2.6909$ each of multiplicity $2$.

Actual values of the parameters are
$\theta=\theta_1=1/15\approx 0.0667$,
$\theta_2=-2/5+\sqrt{6}/6\approx 0.0082$,
$\theta_3=-13/30+3\sqrt{221}/34\approx 0.0039$.

\begin{example}\label{ex_sum>1/2_8}
On $\operatorname{GWS}$~{\bf  1} with $k=15$, $l=14$, $m=13$
($a_1=3/16=0.1875$, $a_2=7/40=0.175$, $a_3=13/80=0.1625$)
all metrics with $\operatorname{Ric}>0$ preserve $\operatorname{Ric}>0$.
\end{example}

The triple $(15, 14,13)$ satisfies $l+m>Y(15)=26$.
Therefore $\theta>\theta_1$.
Analogously the inequalities $k+m>Y(14)$ and $l+k>Y(13)$  imply
$\theta>\theta_2$ and $\theta>\theta_3$,
where  $Y(14)\approx 20.49$ and $Y(13)\approx 15.29$
are known from Table~\ref{tabXY}.
Therefore no trajectory of~\eqref{three_equat} can leave $R$.

Supplementary, for
each  $r_1, r_2$ and $r_3$ their polynomials of the kind~$h(t)$ admit no real root
preserving positive sign.
Actual values of the parameters are
$\theta=1/40= 0.025$,  $\theta_1=-5/16+\sqrt{55}/22\approx 0.0246$,
$\theta_2=-13/40+\sqrt{39}/18\approx 0.0219$,
$\theta_3=-27/80+3\sqrt{159}/106\approx 0.0194$.

\begin{figure}[h!]
\centering
\includegraphics[width=0.9\textwidth]{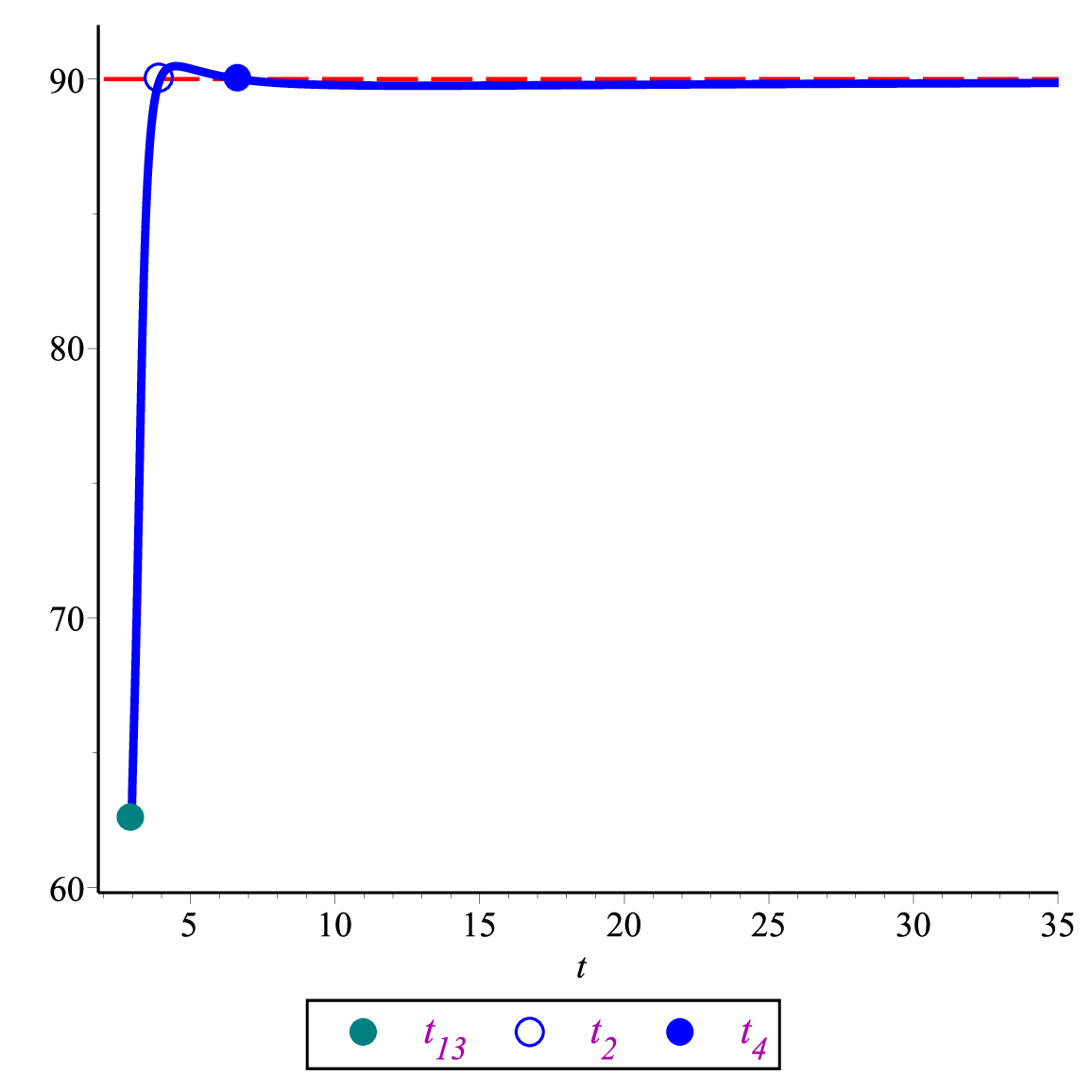}
\caption{GWS~{\bf  1}  with $k=14$, $l=7$, $m=4$:  the function $\alpha(t)$
on the component $r_{13}$}
\label{pic7}
\end{figure}

\begin{figure}[h!]
\centering
\includegraphics[width=0.9\textwidth]{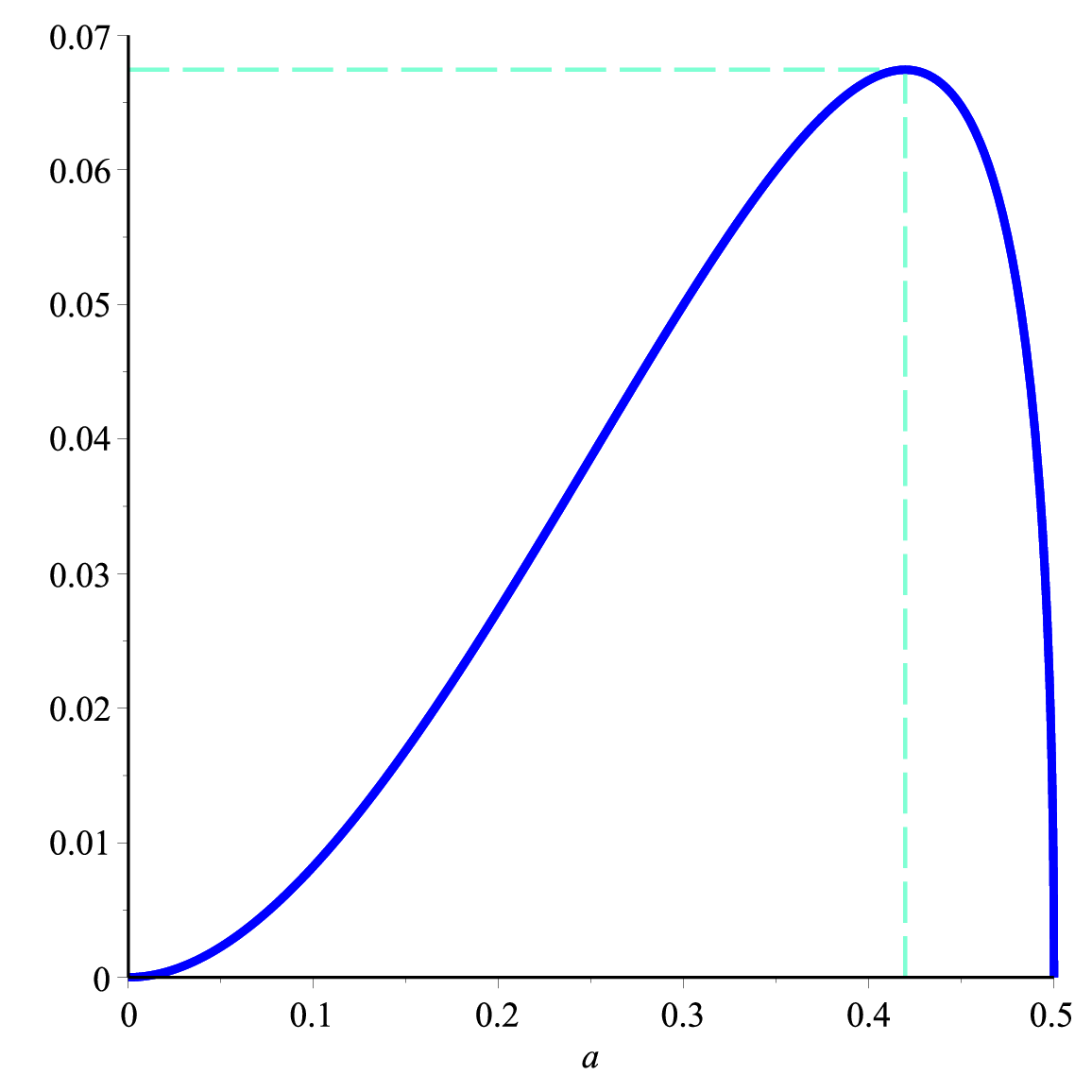}
\caption{The function $\theta_i=\theta_i(a)$, $a\in (0,1/2)$.}
\label{pic11}
\end{figure}

\begin{example}\label{ex_sum>1/2_6}
On $\operatorname{GWS}$~{\bf  1} with  $k=14$, $l=7$, $m=4$
($a_1=7/23\approx 0.3043$, $a_2=7/46\approx 0.1522$, $a_3=2/23\approx 0.0869$)
some metrics  with $\operatorname{Ric}>0$  preserve $\operatorname{Ric}>0$.
\end{example}

Since $l, m<11$ and  $(k,l,m)=(14,7,4)$ satisfies $X(k)\approx 3.51 <l+m<Y(k)\approx 20.49$
then $\theta>\theta_2$, $\theta>\theta_3$ and $\theta<\theta_1$
avoiding direct computations
(actually
$\theta=1/23\approx 0.0435$,  $\theta_1=-9/46+3\sqrt{37}/74\approx 0.0509$,
$\theta_2=-8/23+\sqrt{30}/15\approx 0.0173$,
$\theta_3=-19/46+\sqrt{57}/18\approx 0.0064$).
Therefore no trajectory can leave~$R$ via $r_2, r_3$
but  there expected exactly two points on each component of~$r_1$
at which trajectories of~\eqref{three_equat} can change directions relatively to $R$ in accordance
with the scenario \glqq towards~$R$ - from~$R$ - towards~$R$ again``
(see Fig.~\ref{pic7}).

Supplementary, the polynomial~$h(t)$ corresponding to $r_1$ admits four roots
$t_1\approx 0.2532$, $t_2\approx 3.9488$,
$t_3\approx 0.15$, $t_4\approx 6.6663$ such that
$t_3<t_1<t_{12}=\frac{320}{161}-\frac{8\sqrt{1110}}{161}\approx 0.3321$
and
$t_{13}=\frac{230}{7(\sqrt{2109}-57)}\approx 2.9665<t_2<t_4$.

\section{Final remarks}

${\bf 1)}$ Let us return to generalized Wallach spaces with $a_1=a_2=a_3=a$,
in particular, let us consider the cases
$a=1/4$, $a=1/6$, $a=1/8$ and $a=1/9$ studied in~\cite{Ab7, Ab_RM, AN}.
As mentioned in Introduction the Wallach spaces $W_{6}$, $W_{12}$ and $W_{24}$
are examples of GWS which correspond to $a=1/6$, $a=1/8$ and $a=1/9$ respectively.
The value $a=1/4$ is very special at which four different singular points of~\eqref{three_equat} merge to its unique linearly zero saddle, see~\cite{Ab7, AANS1}. Moreover,  on the
algebraic surface of degree~$12$ mentioned in Introduction,
the point $(1/4,1/4,1/4)$ is a singular point of elliptic umbilic type in the sense of Darboux (see~\cite{Ab2, AANS1}).

Theorem~\ref{thm_3} generalizes Theorem~\ref{thm1}.
Indeed generalized Wallach spaces with $a=1/4$
can be obtained in the special case of GWS~{\bf 1} with $k=l=m=2$.
The condition $\max\{k,l,m\}\le 11$ is satisfied.
Therefore by Theorem~\ref{thm_3} all  initial metrics
with $\operatorname{Ric}>0$
maintain $\operatorname{Ric}>0$.

Although Theorem~\ref{thm_sum_a_i<1/2} covers Theorem~\ref{Sect_Ricci_gen}
 we have stronger statements in~Theorem~\ref{Sect_Ricci_gen} that all  initial metrics with $\operatorname{Ric}>0$
lose $\operatorname{Ric}>0$ on the narrow class of GWS with $a\in (0,1/6)$,
in particular on the spaces~$W_{12}$ and~$ W_{24}$,
whereas Theorem~\ref{thm_sum_a_i<1/2} states such a property only for some initial metrics with $\operatorname{Ric}>0$, but for a wider class of GWS with $a_1+a_2+a_3\le 1/2$,
in particular, for the mentioned $a=1/8$ and $a= 1/9$ ($a_1+a_2+a_3<1/2$),
which correspond to GWS~{\bf 3} with $k=l=m=1$ and GWS~{\bf 15} respectively.

Analogously, for $a\in (1/6,1/4)\cup (1/4,1/2)$ all metrics become
$\operatorname{Ric}>0$ according to Theorem~\ref{Sect_Ricci_gen},
whereas by Theorem~\ref{thm_sum_a_i>1/2} (even by Theorem~\ref{thm_3})
the similar property can be guarantied
for some metrics only, but again on a wider class of GWS
which satisfy $a_1+a_2+a_3>1/2$.

Theorem~\ref{thm_sum_a_i<1/2} is consistent with Theorem~\ref{Sect_Ricci_genn} and extends it.
According to Theorem~\ref{Sect_Ricci_genn} not every metric lose~$\operatorname{Ric}>0$ in the case $a=1/6$ (they are exactly metrics  contained in the domain bounded by parameters of K\"ahler metrics $x_k=x_i+x_j$,  $\{i,j,k\}=\{1,2,3\}$, see~\cite{AN}).
This fact does not contradict the conclusions of Theorem~\ref{thm_sum_a_i<1/2},
where spaces with $a=1/6$ are contained as a special class of GWS~{\bf 2} with $k=l=m$ ($a_1+a_2+a_3=1/2$).
Moreover, Theorem~\ref{thm_sum_a_i<1/2} implies the departure of some metrics
from the domain with $\operatorname{Ric}>0$ at $a=1/6$, that was not previously stated in Theorem~\ref{Sect_Ricci_genn}.

\medskip

${\bf 2}$) In general there is infinitely (uncountably) many triples
$(a_1,a_2,a_3)\in (0,1/2)^3$ satisfying $a_1+a_2+a_3>1/2$
such that trajectories of~\eqref{three_equat} all remain in~$R$ originating there.
Indeed each  function
$\theta_i=\theta_i(a_i)=a_i-\frac{1}{2}+\frac{1}{2}\sqrt{\frac{1-2a_i}{1+2a_i}}$, $i=1,2,3$,
attains its largest value $\theta^{\ast}_i=\theta_i(a^{\ast})\approx 0.067442248$
at the point
$
a^{\ast}=\frac{\mu^2-2\mu+4}{6\mu}\approx 0.4196433778
$
where $\mu=\sqrt[3]{19+3\sqrt{33}}\approx 3.30905648$ (see Fig.~\ref{pic11}).
Then we always have an opportunity to get $\theta >\theta_i$ for each~$i$
choosing (uncountably many) $a_i$  at least from the interval $[a^{\ast}, 1/2)$
because
$$
\theta=a_1+a_2+a_3-0.5\ge 3a^{\ast}-0.5>1.2-0.5=0.7>\theta_i.
$$
Therefore the number of appropriated generalized Wallach spaces, where all metrics with $\operatorname{Ric}>0$  maintain  $\operatorname{Ric}>0$,
might be unbounded too if they exist for such $(a_1,a_2,a_3)$
(compare with the third assertion of Theorem~\ref{thm_3}).

\bigskip

The author expresses his gratitude to Professor Yuri\u\i\ Nikonorov for helpful discussions on the topic of this paper.

\vspace{10mm}

\bibliographystyle{amsunsrt}

\vspace{5mm}

\end{document}